\documentclass[12pt, a4paper]{amsart}

\usepackage[hmargin=28.5mm, vmargin=25mm, includefoot, twoside]{geometry}
\usepackage[bookmarksopen=true]{hyperref}

\usepackage{amsfonts,amssymb,verbatim}
\usepackage{latexsym}
\usepackage{mathrsfs}
\usepackage{stmaryrd}
\usepackage{xspace}
\usepackage{enumerate, paralist}
\usepackage{graphicx}
\usepackage[all]{xy}
\usepackage{extarrows}
\usepackage{tabu}
\usepackage{tikz-cd}

\usepackage[noadjust]{cite}

\usepackage{txfonts, pxfonts}

\usepackage{amsthm}
\usepackage{amsmath}

\numberwithin{equation}{section}

\newtheorem{thm}{Theorem}[section]
 \newtheorem{cor}[thm]{Corollary}
 \newtheorem{lem}[thm]{Lemma}
 \newtheorem{prop}[thm]{Proposition}

\newtheorem{alphthm}{Theorem}			

 \theoremstyle{definition}
  \newtheorem{defn}[thm]{Definition}

  \newtheorem*{strategy*}{Strategy}

\newtheorem{alphdefn}[alphthm]{Definition}			


 \theoremstyle{remark}
 
  \newtheorem{ex}[thm]{Example}
   
\newtheorem*{claim*}{Claim}

\def\B{\mathfrak B}
\def\K{\mathfrak K}
\def\F{\mathfrak F}
\def\FP{\mathfrak{F}_{\rm proj}}
\def\H{\mathcal{H}}
\def\R{\mathcal{R}}

\def\E{\mathcal{E}}
\def\I{\mathcal{I}}
\def\L{\mathcal{L}}

\def\NN{\mathbb{N}}
\def\RR{\mathbb{R}}
\def\ZZ{\mathbb{Z}}
\def\CC{\mathbb{C}}

\def\supp{\mathrm{supp}}
\def\RANK{\mathrm{RANK}}
\def\rank{\mathrm{rank}}
\def\ker{\mathrm{Ker}}
\def\diam{\mathrm{diam}}
\def\ppg{\mathrm{prop}}

\begin{document}

\title{Fibring structures of ideals in Roe algebras and their $K$-theories}

\author{Zhijie Wang, Benyin Fu and Jiawen Zhang}

\address[Zhijie Wang]{School of Data Sciences, Jiaxing University, 899 Guangqiong Road, Jiaxing, 314000, China.}
\email{wangzhijie628@zjxu.edu.cn}

\address[Benyin Fu]{College of Statistics and Mathematics, Shanghai Lixin University of Accounting and Finance, Shangchuan Road 995, Shanghai, 201209, China.}
\email{fuby@lixin.edu.cn}

\address[Jiawen Zhang]{School of Mathematical Sciences, Fudan University, 220 Handan Road, Shanghai, 200433, China.}
\email{jiawenzhang@fudan.edu.cn}

\date{}

\thanks{BF was partly supported by Natural Science Foundation of Shandong Province (No. ZR2024MA048). JZ was partly supported by NSFC (No. 12422107), and the National Key R{\&}D Program of China (No. 2022YFA100700).}

\baselineskip=16pt

\begin{abstract}
In this paper, we investigate the ideal structure of Roe algebras for metric spaces beyond the scope of Yu’s property A. Using the tool of rank distributions, we establish fibring structures for the lattice of ideals in Roe algebras and draw the border of each fibre by introducing the so-called ghostly ideals together with geometric ideals. We also provide coarse geometric criteria to ensure the coincidence of geometric and ghostly ideals and calculate their $K$-theories, which can be helpful to analyse obstructions to the coarse Baum-Connes conjecture on the level of ideals.
%
\end{abstract}

\maketitle

%

\parskip 4pt

\begin{small}
\noindent\textit{Mathematics Subject Classification} (2020): 47L20, 46L80, 51F30.\\
\textit{Keywords: Roe algebras, Rank distribution, Geometric and ghostly ideals, Fibring structures, Property A}
\end{small}

\section{Introduction}\label{sec:intro}

Roe algebras are $C^*$-algebras defined for metric spaces, which encode their coarse geometric structures. These algebras were introduced by Roe \cite{Roe88} in his profound work on higher index theory, where he discovered that the $K$-theories of Roe algebras can serve as receptacles for higher indices of elliptic differential operators on open manifolds. 
Since then, there have been a number of excellent works around this topic (\emph{e.g.}, \cite{CWY13, HLS, WY12, Yu00}), which lead to significant progresses in topology, geometry and analysis (see, \emph{e.g.}, \cite{Roe96, Roe03}).

Let us recall definitions. Given a discrete metric space $(X,d)$ of bounded geometry and a separable infinite-dimensional Hilbert space $\H$, a bounded linear operator $T\in \B(\ell^2(X) \otimes \H)$ can be regarded as an $X$-by-$X$ matrix $[T(x,y)]_{x,y\in X}$ with $T(x,y) \in \B(\H)$. We say that $T$ has \emph{finite propagation} if there exists $R>0$ such that $T(x,y) = 0$ for any $x,y\in X$ with $d(x,y) > R$, and $T$ is \emph{locally compact} if each $T(x,y)$ is a compact operator. The \emph{Roe algebra} $C^*(X)$ of $X$ is defined to be the norm closure of the set of all finite propagation and locally compact operators on $\ell^2(X) \otimes \H$.

There is a uniform version of the Roe algebra once we replace $\H$ by the complex number $\CC$, which also plays a key role in higher index theory (see \cite{Spa09}). Comparing with uniform Roe algebras, the $K$-theories of Roe algebras are relatively easier to calculate, while their $C^*$-algebraic structures are more difficult to study.

Due to their importance, Chen and Wang investigated the ideal structures for (uniform) Roe algebras in \cite{CW01, CW, CW-05, CW-rank, Wan07}, and obtained a full description in the case of property A in \cite{CW-rank}. More precisely for a metric space $(X,d)$, they introduced a notion called rank distribution (see Definition \ref{def-rank distribution}), which is a family of non-negative integer valued functions on $X \times X$ satisfying a couple of axioms. Given a non-zero ideal $I$ in the Roe algebra $C^*(X)$, they associated to it a rank distribution $\R(I)$ (see Equality (\ref{EQ:R(I)})), which ``coarsely captures'' the rank of the matrix entries of operators in $I$. It turned out that rank distributions can be used to characterise geometric ideals (see Definition \ref{defn:geom ideal}), which are ideals where finite propagation operators are dense. Furthermore, they noticed that all ideals are geometric provided the space $X$ has property A and hence, established a complete picture for ideal structures of Roe algebras within the context of property A (see Proposition \ref{R=R(I(R))}).

Concerning the uniform case, the situation is relatively easier and it was shown in \cite{CW} that within the context of property A, all ideals are geometric and can be characterised using invariant open subsets of the Stone-\v{C}ech compactification $\beta X$ (see Section \ref{ssec:inv open}). Besides, it is worthwhile mentioning that recently Wang and the third-named author made the first step to the case beyond property A by introducing ghostly ideals in \emph{uniform} Roe algebras (see \cite{WZ23}).


While for Roe algebras, their structures are more complicated than the uniform version and hence their ideal structures are more difficult to study. In fact, over the last two decades, there seems no essential progress on their ideal structures beyond the scope of property A, and the general picture is far from clear.

In this paper, we investigate the ideal structure of Roe algebras beyond the scope of property A, trying to fill in the missing piece. Inspired by the work of rank distributions in \cite{CW-rank}, it is natural to consider the following map 
\begin{equation}\label{EQ:fibring of ideals}
\Psi: \{\text{non-zero ideals in the Roe algebra }C^*(X)\} \longrightarrow \{\text{rank distributions on }X\},
\end{equation}
given by $I \mapsto \R(I)$.
The correspondence of geometric ideals from \cite{CW-rank} shows that $\Psi$ is always surjective, while \emph{not} injective in general (see Example \ref{ex:ghost}). 
Hence the map $\Psi$ provides a fibring structure of the lattice of non-zero ideals in the Roe algebra over the lattice of rank distributions, which suggests the following:

\begin{strategy*}
In order to study the ideal structure of the Roe algebra, it suffices to study the base space consisting of all rank distributions and each fibre $\Psi^{-1}(\R)$.
\end{strategy*}

One of the main contributions of this paper is providing structural results for both of the rank distributions and each fibre. 
Firstly, we introduce the following:


\begin{alphdefn}[Definition \ref{defn:ghostly ideal}]
Given a rank distribution $\mathcal{R}$ on a metric space $X$, the associated \emph{ghostly ideal} in the Roe algebra $C^*(X)$ to $\R$ is defined to be 
\[
\tilde{I}(\mathcal{R}):=\{T\in C^*(X): \text{RANK}(T_\varepsilon)\in \mathcal{R} \text{ for any } \varepsilon>0\}.
\]
Here $T_\varepsilon$ is the $\varepsilon$-truncation of $T$ (see Definition \ref{defn:truncation}).
\end{alphdefn}

We show that $\tilde{I}(\R)$ is indeed an ideal in the Roe algebra $C^*(X)$ and belongs to the fibre $\Psi^{-1}(\R)$ (see Lemma \ref{lem:indeed an ideal} and \ref{lem:R for ghost}). Moreover, we obtain a sandwiching result (see Proposition \ref{sandwich lemma}), showing that the geometric ideal and the ghostly ideal provide the upper and lower bound for the fibre $\Psi^{-1}(\R)$, respectively.

As for the lattice of rank distributions, we introduce the associated invariant open subset $U_{\R}\subseteq \beta X$ to a given rank distribution $\R$ (see Equality (\ref{EQ:inv open subset})). Similar to the map $\Psi$ in (\ref{EQ:fibring of ideals}), we consider the following:
\begin{equation}\label{EQ:Phi}
\Phi: \{\text{rank distributions on }X\} \longrightarrow \{\text{non-empty invariant open subsets of } \beta X\}, 
\end{equation}
given by $\R \mapsto U_\R$. We show that $\Phi$ is always surjective but \emph{not} injective in general. Hence $\Phi$ provides a fibring structure for the base space in the strategy above, \emph{i.e.}, the lattice of rank distributions fibres over that of non-empty invariant open subsets of $\beta X$. Therefore, the study of the base space can be further reduced to each fibre $\Phi^{-1}(U)$. Given such a $U \subseteq \beta X$, we discover two special elements in the fibre $\Phi^{-1}(U)$, called the minimal and maximal rank distributions and denoted by $\R_{\min}(U)$ and $\R_{\max}(U)$, respectively (see Definition \ref{defn:min and max}). Moreover, we prove that they are the upper and lower bounds for the fibre $\Phi^{-1}(U)$ (see Proposition \ref{prop:sandwiching for rank dist}).

In conclusion, the lattice of non-zero ideals in the Roe algebra $C^*(X)$ can be fibred over that of rank distributions, which can be further fibred over that of non-empty invariant open subsets of $\beta X$, and we have the following:

\begin{alphthm}[Proposition \ref{sandwich lemma} and Proposition \ref{prop:sandwiching for rank dist}]\label{thm:structure intro}
Let $X$ be a discrete metric space of bounded geometry. Concerning the ideal structure of the Roe algebra $C^*(X)$, we have:
\begin{enumerate}
 \item For a rank distribution $\R$ on $X$ and a non-zero ideal $I$ in $C^*(X)$ with $\R(I) = \R$, we have $I(\R) \subseteq I \subseteq \tilde{I}(\R)$. Here $I(\R)$ is the geometric ideal.
 \item For a non-empty invariant open subset $U\subseteq\beta X$ and a rank distribution $\R$ on $X$ with $U_\R = U$, we have $\R_{\min}(U) \subseteq \R \subseteq \R_{\max}(U)$. Moreover, $\R_{\min}(U) = \R_{\max}(U)$ \emph{if and only if} $U=X$.
\end{enumerate}
\end{alphthm}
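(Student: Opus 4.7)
The proof naturally splits according to the two assertions.

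For part (1), each inclusion is handled separately. The upper inclusion $I \subseteq \tilde{I}(\R)$ unpacks the definitions: given $T \in I$ and $\varepsilon > 0$, the $\varepsilon$-truncation $T_\varepsilon$ is one of the ``building blocks'' used to define $\R(I)$ in (\ref{EQ:R(I)}), so $\RANK(T_\varepsilon) \in \R(I) = \R$, which is precisely the defining condition for membership in the ghostly ideal $\tilde{I}(\R)$.

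For the lower inclusion $I(\R) \subseteq I$, the key is the uniqueness of geometric ideals with a prescribed rank distribution. I would set $J$ to be the closure in $C^*(X)$ of the set of finite-propagation operators lying inside $I$; then $J$ is automatically a geometric ideal contained in $I$. The task reduces to verifying $\R(J) = \R$: the inclusion $\R(J) \subseteq \R(I) = \R$ is immediate from $J \subseteq I$, and for the reverse one uses that for each $T \in I$ the truncations $T_\varepsilon$ are finite-propagation and hence lie in $J$, so they recover the rank data used to define $\R(I)$. Once $\R(J) = \R = \R(I(\R))$, Proposition \ref{R=R(I(R))} forces $J = I(\R)$, and therefore $I(\R) \subseteq I$.

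For part (2), the inclusion $\R_{\min}(U) \subseteq \R \subseteq \R_{\max}(U)$ should follow directly from the extremal defining properties of $\R_{\min}(U)$ and $\R_{\max}(U)$ inside the fibre $\Phi^{-1}(U)$, together with the fact that all three rank distributions share the same invariant open set $U$. The substantive claim is the characterisation $\R_{\min}(U) = \R_{\max}(U)$ if and only if $U = X$. The ``if'' direction is a direct verification from the formulae defining $\R_{\min}$ and $\R_{\max}$: when $U$ coincides with the dense open subset $X \subseteq \beta X$ there are no corona points available to generate additional rank functions. For the ``only if'' direction, I would contrapose: assuming $U$ contains some $\omega \in \beta X \setminus X$, I would construct a rank function supported along a net in $X$ accumulating at $\omega$ that belongs to $\R_{\max}(U)$ but violates the finiteness/locality constraint cutting out $\R_{\min}(U)$, yielding strict inclusion.

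I expect the subtlest step to be this last construction, where the witness must simultaneously be a bona fide rank distribution satisfying the axioms of Definition \ref{def-rank distribution}, have invariant open set equal to $U$, and escape $\R_{\min}(U)$; reconciling these three conditions is exactly where the interplay between the coarse geometry of $X$ and the topology of its corona $\beta X \setminus X$ becomes essential. The remaining arguments are essentially bookkeeping once the precise relationships among $\R(I)$, the geometric ideal $I(\R)$, and the ghostly ideal $\tilde{I}(\R)$, together with uniqueness of geometric ideals, are in hand.
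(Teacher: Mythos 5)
Your overall architecture matches the paper's, but two steps that you treat as immediate are precisely the substantive content, and as written your justifications for them do not go through. In part (1), the upper inclusion $I\subseteq\tilde{I}(\R)$ is fine and is the paper's argument. For the lower inclusion you claim that for $T\in I$ the truncations $T_\varepsilon$ ``are finite-propagation and hence lie in $J$''. The ``hence'' is a non-sequitur: $J=\overline{I\cap\CC[X]}$ is the closure of the finite-propagation operators \emph{belonging to $I$}, so you need $T_\varepsilon$ to lie in $\overline{I\cap\CC[X]}$, and finite propagation alone gives no reason for $T_\varepsilon$ to belong to $I$ at all --- truncation is a nonlinear, entrywise operation and does not visibly preserve membership in an ideal. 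This is exactly the nontrivial input the paper invokes, namely \cite[Theorem 3.2]{CW-rank}, which asserts $T_\varepsilon\in\overline{I\cap\CC[X]}$ for every $T\in I$ and $\varepsilon>0$. With that theorem your argument (show $\R(J)=\R$ and apply Proposition \ref{R=R(I(R))}) is precisely the paper's; without it the inclusion $I(\R)\subseteq I$ is unproved.

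In part (2), the inclusion $\R\subseteq\R_{\max}(U)$ is indeed immediate from $U_\R=U$, but $\R_{\min}(U)\subseteq\R$ does \emph{not} ``follow directly from the extremal defining properties'': $\R_{\min}(U)$ is defined concretely as the set of \emph{bounded} rank functions $\alpha$ with $\overline{r(\supp(\alpha))}\subseteq U$, and the assertion that this set is a lower bound for the whole fibre $\Phi^{-1}(U)$ is the statement to be proved, so your phrasing is circular. The paper's argument has genuine content: via the bijection between non-empty invariant open subsets of $\beta X$ and ideals of the bounded coarse structure $\E_d$ (\cite[Theorem 6.3]{CW}), one shows $\{\supp(\alpha):\alpha\in\R\}=\{\supp(\alpha):\alpha\in\R_{\min}(U)\}$; hence for $\alpha\in\R_{\min}(U)$ there is $\alpha'\in\R$ with the same support, boundedness of $\alpha$ gives $\alpha\le n\alpha'$ for some $n\in\NN$, and condition (3) of Definition \ref{def-rank distribution} yields $\alpha\in\R$. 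Your sketch of the ``only if'' direction of the final claim is essentially correct and is in fact the easy step: if $U\neq X$ one picks an infinite $Y\subseteq X$ with $\overline{Y}^{\beta X}\subseteq U$ (available since $U$ is open and meets the corona) and places an unbounded $\ZZ_{\geq 0}$-valued function on the diagonal over $Y$; note that it is unboundedness, not accumulation at a corona point, that separates $\R_{\max}(U)$ from $\R_{\min}(U)$.
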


Theorem \ref{thm:structure intro} makes the first step to study the ideal structure of the Roe algebra beyond the case of property A, following the strategy above. More precisely, once we can describe every ideal between the geometric and ghostly ideals for each rank distribution and every rank distribution between the minimal and maximal ones, then we will obtain a complete picture for the ideal structure of the Roe algebra. Comparing with the uniform case where only  $\Psi$ is needed since $\Phi$ is trivial, here more technical work concerning ranks is required.

On the other hand, our structural results shed new light on $K$-theories. The computation of $K$-theories for Roe algebras is a main focus in higher index theory since they contain higher indices of elliptic differential operators. One common approach is to consult the coarse Baum-Connes conjecture \cite{BC00, BCH94, HR95}, a central conjecture in higher index theory and have fruitful applications in topology, geometry and analysis \cite{Roe96}. It was shown in \cite{HLS} that this conjecture fails in general, where the minimal ghostly ideal (see Example \ref{ex:ghost}) plays a key role. We study the $K$-theories of general ghostly ideals in Roe algebras and prove the following:

\begin{alphthm}[Theorem \ref{thm:K-theory}]\label{thm:Ktheory intro}
If $X$ is a discrete metric space of bounded geometry which can be coarsely embedded into a Hilbert space, then for any rank distribution $\mathcal{R}$ on $X$, we have an isomorphism $(\iota_\R)_*:K_*(I(\mathcal{R}))\longrightarrow K_*(\tilde{I}(\mathcal{R}))$ for $\ast=0,1$ induced by the inclusion.
\end{alphthm}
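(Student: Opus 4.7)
My plan is to apply the six-term exact sequence in $K$-theory to the short exact sequence
\[
0 \longrightarrow I(\R) \xrightarrow{\iota_\R} \tilde{I}(\R) \longrightarrow \tilde{I}(\R)/I(\R) \longrightarrow 0,
\]
which reduces the problem to proving $K_*(\tilde{I}(\R)/I(\R)) = 0$. More efficiently, I would realise the inclusion $\iota_\R$ as the $0$-evaluation of a map between localization algebras that is literally an equality, and then invoke a localization-to-Roe principle.

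Concretely, introduce the localization algebras $C_L^*(I(\R))$ and $C_L^*(\tilde{I}(\R))$, consisting of uniformly norm-continuous bounded functions $f:[0,\infty) \to C^*(X)$ each value of which has finite propagation, with $\mathrm{prop}(f(t)) \to 0$ as $t \to \infty$, taking values in the corresponding ideal. The key observation is that for a finite propagation operator $T$, the defining rank condition for $\tilde{I}(\R)$ already places $T$ inside $I(\R)$: the $\varepsilon$-truncations $T_\varepsilon$ retain finite propagation and have ranks in $\R$, which is exactly the condition (combined with the density of finite propagation operators in any geometric ideal) that identifies $T$ as an element of $I(\R)$. Consequently the two localization algebras coincide on the nose, and hence so do their $K$-theories.

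Next, I would invoke an ideal-level version of Yu's localization theorem \cite{Yu00}: under coarse embeddability into a Hilbert space, the evaluation-at-zero morphism induces an isomorphism $(\mathrm{ev}_0)_*: K_*(C_L^*(J)) \xrightarrow{\;\cong\;} K_*(J)$ for $J=I(\R)$ and $J=\tilde{I}(\R)$. Combining with the previous step yields a commutative diagram
\[
\begin{tikzcd}
K_*(C_L^*(I(\R))) \arrow[r,"="] \arrow[d,"\cong"'] & K_*(C_L^*(\tilde{I}(\R))) \arrow[d,"\cong"] \\
K_*(I(\R)) \arrow[r,"(\iota_\R)_*"'] & K_*(\tilde{I}(\R))
\end{tikzcd}
\]
from which $(\iota_\R)_*$ is an isomorphism.

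The principal obstacle is the second step: verifying that Yu's Dirac-dual-Dirac construction, which employs twisted Roe algebras and localized Bott elements built from the coarse embedding into Hilbert space, descends compatibly to the ideals cut out by a rank distribution. Concretely, one must show that the asymptotic morphisms implementing the isomorphism preserve $\R$, which is delicate because the Bott-Dirac construction acts on the coefficient Hilbert space $\H$ of each matrix entry and can a priori alter ranks. A secondary subtlety over the uniform case \cite{WZ23} is precisely that here the matrix entries are compact operators on $\H$ rather than scalars, so the ``rank'' appearing in the definition of $\R$ refers to ranks of compact operators, and these must be tracked carefully through the construction to ensure the localization theorem goes through at the level of our ideals.
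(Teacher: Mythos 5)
Your first step is fine: since every element of $C^*(X)$ is automatically locally compact, a finite propagation operator $T\in\tilde{I}(\R)$ satisfies $T_\varepsilon\in C_\R$ for all $\varepsilon>0$ and $T_\varepsilon\to T$, so $\tilde{I}(\R)\cap\CC[X]\subseteq I(\R)$ (this inclusion is also used in the paper, in the proof of Theorem \ref{thm:partial A}). Hence the two localization algebras you define do coincide. But this is precisely why the second step is not a "technical obstacle" to be deferred --- it is the entire theorem. Because $C_L^*(\tilde{I}(\R))=C_L^*(I(\R))$ and the evaluation map consequently factors through $I(\R)$, the assertion that $(\mathrm{ev}_0)_*:K_*(C_L^*(\tilde{I}(\R)))\to K_*(\tilde{I}(\R))$ is an isomorphism is literally equivalent to the statement you are trying to prove (given the corresponding statement for $I(\R)$, which is itself not off-the-shelf). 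There is no "ideal-level version of Yu's localization theorem" in the literature that applies to $\tilde{I}(\R)$: the localization algebra only sees the finite propagation part of the ideal, so any such theorem must explain how the ghost part of $\tilde{I}(\R)$ is detected, and your argument offers no mechanism for this. You acknowledge the difficulty but do not resolve it, so the proof has a genuine gap. A secondary problem is that, as stated over the discrete space $X$ rather than over the Rips complexes $P_d(X)$, the evaluation map is essentially never an isomorphism even for $C^*(X)$ itself (already for $X=\ZZ$), so the framework would need to be set up over Rips complexes before the main issue can even be addressed.

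For contrast, the paper's proof gets its hands on $\tilde{I}(\R)$ through Proposition \ref{kernel-ghost}, which identifies $\tilde{I}(\R)$ as the kernel of the quotient map $\pi_\R: A\rtimes_r G(X)\to (A/J)\rtimes_r G(X)$ with $A=\ell^\infty(X;\K(\H))$ and $J=I(\R)\cap A$; this is the structural input your approach lacks. Coarse embeddability enters via \cite{STY} and Tu's theorem \cite{Tu}: the groupoid $\mathcal{G}$ with $G(X)=\beta X\rtimes\mathcal{G}$ is a-T-menable, hence $K$-amenable, so $K_*(B\rtimes\mathcal{G})\cong K_*(B\rtimes_r\mathcal{G})$ for the coefficients $J$, $A$ and $A/J$. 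Comparing the full and reduced six-term sequences and applying the Five Lemma then transfers the isomorphism $K_*(J\rtimes_r G(X))\cong K_*(I(\R))$ to $K_*(\tilde{I}(\R))$. If you want to salvage a localization/Dirac--dual-Dirac route, you would in effect have to rerun Yu's argument with coefficients in $J$ and in $A/J$ and then use the same kernel identification --- at which point you have reproduced the paper's proof in different language.
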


Furthermore, it is important to learn when the geometric ideal $I(\R)$ coincides with the ghostly ideal $\tilde{I}(\R)$, which will simplify the ideal structure thanks to Theorem \ref{thm:structure intro}. However, this question is more difficult than only considering their $K$-theories as in Theorem \ref{thm:Ktheory intro}. Recall that the uniform case was already studied in \cite{WZ23}, where a notion called partial property A (see Definition \ref{defn:partial A}) was introduced as a sufficient condition to answer the question. However as we discover here, partial property A \emph{cannot} ensure $I(\R) = \tilde{I}(\R)$ for general rank distribution $\R$. More precisely, we prove the following (where partial property A trivially holds):

\begin{alphthm}[Theorem \ref{prop:counterexample}]\label{thm:counterexample intro}
For a sequence of expander graphs $\mathcal{G}_n=(V_n,E_n)$, denote $(X,d)$ their coarse disjoint union. For $\R:=\R_{\min}(\beta X)$, we have $I(\R) \neq \tilde{I}(\R)$ and moreover, $(\iota_{\R})_0: K_0(I(\R)) \to K_0(\tilde{I}(\R))$ is \emph{not} an isomorphism.
\end{alphthm}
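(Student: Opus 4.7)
The strategy mirrors the Higson--Lafforgue--Skandalis counterexample \cite{HLS} to the coarse Baum--Connes conjecture: the Kazhdan-type ghost projection on a coarse disjoint union of expanders gives a class in $K_0(\tilde{I}(\R))$ that is not detected from $K_0(I(\R))$. For each $\mathcal{G}_n=(V_n,E_n)$, let $e_n\in\B(\ell^2(V_n))$ be the rank-one projection onto constant functions (so $e_n(x,y)=1/|V_n|$), fix a rank-one projection $q\in\B(\H)$, and set $p:=\bigoplus_n e_n\otimes q$ on $\ell^2(X)\otimes\H$. By the uniform spectral gap of the $\mathcal{G}_n$'s, $p$ is a norm limit of polynomials in the block-wise graph Laplacians with uniformly bounded propagation on each $\mathcal{G}_n$, so $p\in C^*(X)$.

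Next I show $p\in\tilde{I}(\R)\setminus I(\R)$. Since $p(x,y)=1/|V_n|\to 0$, $p$ is a ghost; for any $\varepsilon>0$ the truncation $p_\varepsilon$ is supported on the finitely many blocks with $|V_n|\le 1/\varepsilon$, so $\RANK(p_\varepsilon)$ is compactly supported on $X$ and a direct check against the definition of $\R_{\min}(\beta X)$ places it in $\R$. Hence $p\in\tilde{I}(\R)$. To see $p\notin I(\R)$, suppose toward a contradiction that finite-propagation operators $T_k\in\tilde{I}(\R)$ converge to $p$ in norm. Since the $V_n$ are eventually separated by any fixed distance, each $T_k$ is block-diagonal on a cofinite family of blocks, and each block $T_k|_{V_n\times V_n}$ must norm-approximate $p_n=e_n\otimes q$. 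The minimality of $\R_{\min}(\beta X)$, however, bounds the block-wise rank of $T_k$ by a fixed function independent of $n$, and this is incompatible with approximating the full sequence of projections $p_n$ in norm as $|V_n|\to\infty$ — the expander spectral gap being precisely what produces the obstruction.

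For the $K$-theoretic non-isomorphism, consider the block-compression $\ast$-homomorphism
\[
\sigma:\tilde{I}(\R)\longrightarrow\prod_n\K\bigl(\ell^2(V_n)\otimes\H\bigr)\Big/\bigoplus_n\K\bigl(\ell^2(V_n)\otimes\H\bigr),
\]
well defined because operators in $C^*(X)$ are locally compact and each $V_n$ is finite (so each block compression is a finite sum of compact operators and is thus compact). On $p$, $\sigma_*([p])$ is represented by the eventually constant sequence $(1,1,\ldots)$ in the subgroup $\prod_n\ZZ/\bigoplus_n\ZZ$ of $K_0$ of the target, which is non-zero. On the generating set of $I(\R)$ (finite-propagation operators with rank profile in $\R_{\min}(\beta X)$), the block ranks are controlled so that the induced classes lie in $\bigoplus_n\ZZ$ at the $K_0$-level, giving $\sigma_*\circ(\iota_\R)_0=0$. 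Combining, $[p]$ does not lie in the image of $(\iota_\R)_0$, which yields both $I(\R)\ne\tilde{I}(\R)$ and the failure of $(\iota_\R)_0$ to be an isomorphism.

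The main technical obstacle is showing $\sigma_*\circ(\iota_\R)_0 = 0$ at the full $K_0$ level rather than only on projection generators; this requires turning the minimality of $\R_{\min}(\beta X)$ into an explicit block-rank bound that survives stabilization. If a direct verification via $\sigma$ proves too rigid, an equivalent route is to invoke the six-term exact sequence for $I(\R)\triangleleft\tilde{I}(\R)$ and to detect the class $[p+I(\R)]\ne 0$ in $K_0(\tilde{I}(\R)/I(\R))$ via a tracial functional on the quotient built from the Kazhdan constants of the expander family; such a trace vanishes identically on $I(\R)$ and delivers the same conclusion. Either approach hinges on the same mechanism — the expander spectral gap producing a $K$-theoretic ghost class that is invisible from the geometric side.
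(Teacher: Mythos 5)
There is a genuine gap, and it sits exactly where you flagged your ``main technical obstacle'': for $\R=\R_{\min}(\beta X)$ your ghost projection $p=\bigoplus_n e_n\otimes q$ with a \emph{fixed} rank-one $q$ actually lies \emph{inside} $I(\R)$. Recall that $\R_{\min}(\beta X)$ consists of \emph{all bounded} rank functions, so $I(\R)$ is the uniform algebra of $X$ (Example \ref{ex:min}), not the compact operators. Writing $p=\lim_k p_k(\Delta)\otimes q$ with polynomials $p_k$ converging uniformly to $\chi_{\{0\}}$ on $\sigma(\Delta)$ (these exist precisely because of the expander spectral gap), each approximant has finite propagation and all matrix entries of rank at most $1$, hence lies in $C_{\R}$; therefore $p\in I(\R)$. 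Consequently both of your key steps fail: the claimed incompatibility between bounded block-wise ranks and norm-approximation of the blocks $p_n$ is false, and $\sigma_*\circ(\iota_\R)_0\neq 0$ (for instance the diagonal projection $\bigoplus_n 1_{\{x_n\}}\otimes q$, one rank-one entry per block, lies in $C_\R$ and maps under $\sigma_*$ to the nonzero class of $(1,1,\dots)$). Your argument is really an argument for $\R_{\mathrm{fin}}=\R_{\min}(X)$, where $I(\R)=\K(\ell^2(X)\otimes\H)$ and the classical Higson--Lafforgue--Skandalis ghost projection suffices; that is not the statement being proved.

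The paper's proof repairs exactly this point by letting the fibrewise rank grow: it takes $Q=\bigoplus_n P_n\otimes q_n$ with $\rank(q_n)=(\sharp V_n)^4$. Then $Q\in\tilde I(\R)$ as in your argument (entries have norm $1/\sharp V_n\to 0$, so every $\varepsilon$-truncation is finitely supported), but for $T\in C_\R$ one has $\RANK(T)\le M$, so on a far block $V_{n_0}$ the ranges of all $T_{n_0}(x,y)^*$ span at most $M(\sharp V_{n_0})^2$ dimensions, which is beaten by $(\sharp V_{n_0})^4$; a unit vector $v$ with $q_{n_0}v=v$ annihilated by every $T_{n_0}(x,y)$ then yields $\|T-Q\|\ge 1$, whence $Q\notin I(\R)$. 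For the $K$-theory statement the paper does use the block-rank map $\tau_0$ into $\prod\ZZ/\bigoplus\ZZ$, but the detection is a \emph{growth-rate} comparison rather than vanishing: every class in $K_0(I(\R))$ has a representative $(M_n)$ with $M_n/(\sharp V_n)^3$ bounded, while $\tau_0([Q])=[((\sharp V_n)^4)_n]$ grows strictly faster, so $[Q]$ is not in the image of $(\iota_\R)_0$. To salvage your write-up you would need to replace the fixed $q$ by projections whose ranks grow faster than $(\sharp V_n)^2$ and replace ``$\sigma_*$ vanishes on $K_0(I(\R))$'' by this growth comparison; the proposed alternative via a trace built from Kazhdan constants runs into the same problem, since such a functional cannot vanish on the uniform algebra.
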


Theorem \ref{thm:counterexample intro} indicates that ideal structures of Roe algebras are more complicated than their uniform counterparts and hence, more works are required here. It also shows the importance to study the fibring of rank distributions over non-empty invariant open subsets of $\beta X$ in Theorem \ref{thm:structure intro}(2).

Finally to remedy the defects revealed in Theorem \ref{thm:counterexample intro}, we provide the following:

\begin{alphthm}[see Theorem \ref{thm:partial A} for a precise statement]\label{thm:partial A intro}
Let $X$ be a discrete metric space of bounded geometry and $\R$ be a rank distribution on $X$. If $X$ has partial property A towards to $\beta X \setminus U_{\R}$ and finite rank projection valued functions controlled by $\R$ converges locally to the identity, then $I(\mathcal{R})=\tilde{I}(\mathcal{R})$.
\end{alphthm}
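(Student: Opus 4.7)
The plan is to establish the nontrivial inclusion $\tilde{I}(\R) \subseteq I(\R)$, since the opposite inclusion $I(\R) \subseteq \tilde{I}(\R)$ is already provided by Proposition \ref{sandwich lemma}. Fix $T \in \tilde{I}(\R)$ and $\delta > 0$; the goal is to produce a finite propagation, locally compact operator $S \in I(\R)$ with $\|T - S\| < \delta$.

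First, I would invoke the partial property A hypothesis towards $\beta X \setminus U_\R$ to obtain a positive type kernel $k\colon X \times X \to [0,1]$ of finite propagation whose associated Schur multiplier $M_k$ is contractive on $C^*(X)$ and satisfies $\|M_k(T) - T\| < \delta/2$. The key point is that an element $T$ of the ghostly ideal $\tilde{I}(\R)$ is, in a suitable sense, concentrated on the closed invariant subset $\beta X \setminus U_\R$ of $\beta X$, which is precisely where partial property A guarantees that such kernels approximate $1$. The operator $M_k(T)$ then has finite propagation and is locally compact, and each of its matrix entries $k(x,y)T(x,y)$ has rank no greater than $\rank T(x,y)$, so the rank function of any truncation is pointwise dominated by that of the corresponding truncation of $T$.

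Next, I would use the hypothesis that there exist finite rank projection valued functions $\{p_\lambda\}$ on $X$, controlled by $\R$, converging locally to the identity. Let $P_\lambda$ be the block-diagonal operator on $\ell^2(X)\otimes \H$ whose $(x,x)$-entry is $p_\lambda(x)$. The finite propagation and local compactness of $M_k(T)$ imply that $P_\lambda M_k(T) P_\lambda \to M_k(T)$ in norm, so for $\lambda$ large enough the operator $S := P_\lambda M_k(T) P_\lambda$ satisfies $\|S - M_k(T)\| < \delta/2$. Now $S$ has finite propagation, and each of its matrix entries $p_\lambda(x) k(x,y) T(x,y) p_\lambda(y)$ has rank at most $\min\{\rank p_\lambda(x),\rank p_\lambda(y)\}$; hence by the axioms of rank distributions (applied to the $\R$-controlled family $\{p_\lambda\}$), the rank function of $S$ lies in $\R$. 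Therefore $S \in I(\R)$, and combining the two estimates gives $\|T - S\| < \delta$.

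The main obstacle is the first step: making precise the sense in which $T \in \tilde{I}(\R)$ is concentrated on $\beta X \setminus U_\R$, and using this to transfer the partial property A approximation to a genuine norm estimate on $M_k(T) - T$. This will require unpacking the definition of $\tilde{I}(\R)$ through its $\varepsilon$-truncations, a Schur multiplier computation at the level of matrix entries, and a careful coordination between the propagation of $T_\varepsilon$, the parameters defining partial property A on $\beta X \setminus U_\R$, and the locality of $\{p_\lambda\}$. The second, sandwiching-by-$P_\lambda$ step is more routine once the first is in hand, though one must still verify that the axioms of rank distributions are strong enough to absorb the bilateral composition $p_\lambda(x)(\cdot)p_\lambda(y)$ into $\R$.
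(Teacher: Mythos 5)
There is a genuine gap, and it sits exactly where you flag ``the main obstacle.'' Partial property A towards $\beta X\setminus U_\R$ does \emph{not} produce a positive type kernel on all of $X\times X$: by Lemma \ref{equi-partial A} it only yields, for given $\varepsilon,R$, a subset $Y\subseteq X$ with $\overline{Y}^{\beta X}\subseteq U_\R$ and a kernel $\varphi$ defined on $Y^c\times Y^c$ that is close to $1$ on $R$-close pairs \emph{in $Y^c\times Y^c$}. So the global estimate $\|M_k(T)-T\|<\delta/2$ you want in your first step is not available from the hypothesis; obtaining it for all of $T$ would amount to assuming full property A. Your justification for why it should nevertheless work --- that $T\in\tilde{I}(\R)$ is ``concentrated on $\beta X\setminus U_\R$'' --- is backwards: by definition the truncations $T_\varepsilon$ have $\RANK(T_\varepsilon)\in\R$, hence are supported over $U_\R$, and it is precisely the part of $T$ living over $U_\R$ that the partial kernel cannot touch. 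The paper's proof of Theorem \ref{thm:partial A} therefore decomposes $T=\chi_YT\chi_Y+\chi_YT\chi_{Y^c}+\chi_{Y^c}T\chi_Y+\chi_{Y^c}T\chi_{Y^c}$, applies the Schur multiplier only to the corner $\chi_{Y^c}T\chi_{Y^c}$ (where the kernel exists), and handles the three pieces touching $Y$ by the diagonal cut-downs $\Lambda_\phi$, $\phi\in\Gamma_\R$. Your proposal applies each tool globally where it is only locally available, and the two global steps composed in sequence do not reassemble into the sum of the four local treatments.

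The second step has a further error: you assert that finite propagation and local compactness of $M_k(T)$ imply $P_\lambda M_k(T)P_\lambda\to M_k(T)$ in norm. Local compactness gives only entrywise convergence; norm convergence of such compressions requires uniformity over $X$ and is exactly the content of the nontrivial hypothesis (condition (2) of Theorem \ref{thm:partial A}), stated there as $\|\Lambda_\phi\chi_YT-\chi_{\supp(\phi)\cap Y}T\|<\varepsilon$ for cofinal $\phi\in\Gamma_\R$. If this convergence were automatic, Theorem \ref{prop:counterexample} could not hold: there $\beta X\setminus U_\R=\emptyset$, so partial property A is vacuous and every $\phi\in\Gamma_{\R_{\min}(\beta X)}$ converges locally (strongly) to the identity, yet $I(\R)\neq\tilde{I}(\R)$ because the Kazhdan-type projection $Q$ has blocks of unbounded rank that no bounded-rank $\Lambda_\phi$ can norm-approximate. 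So both halves of your argument need to be localised to the correct corners of the decomposition before the estimates can be made to close.
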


We notice that for a maximal rank distribution, the second condition holds automatically (see Lemma \ref{lem:condition for Rmax}) and hence in this case, partial property A \emph{does} imply that the geometric ideal is the same as the ghostly one.

\subsection*{Organisation}
In Section \ref{sec:pre}, we recall background knowledge in coarse geometry and groupoids. In Section \ref{sec:rank dist}, we recall the notions of rank distributions and geometric ideals from \cite{CW-rank} and provide an alternative picture by introducing a spatial version. Section \ref{sec:ghost} is devoted to the structural results (Theorem \ref{thm:structure intro}), divided into two parts. Finally in Section \ref{sec:relation}, we discuss their $K$-theories and determine when the geometric ideal coincides with the ghostly one by proving Theorems \ref{thm:Ktheory intro}, \ref{thm:counterexample intro} and \ref{thm:partial A intro}.

\subsection*{Acknowledgement} We would like to thank Jintao Deng and Qin Wang for helpful discussions and comments, and the anonymous referees for helpful suggestions.

\section{Preliminaries}\label{sec:pre}
Here we collect some basic notions including Roe algebras, groupoids, coarse groupoids, and crossed products.

\subsection{Roe algebras}

Let $X$ be a discrete metric space of bounded geometry. Here bounded geometry means that for any $r>0$, the number $\sup_{x\in X} \sharp B(x,r)$ is finite, where $\sharp Z$ denotes the cardinality of a set $Z$. Given $A,B \subseteq X \times X$, denote 
\[
A^{-1}:=\{(y,x): (x,y) \in A\}
\] 
and
\[
A\circ B:=\{(x,z): \exists y\in X \text{ such that } (x,y) \in A \text{ and } (y,z) \in B\}.
\]
A subset $E \subseteq X \times X$ is called an \emph{entourage} if there exists $r>0$ such that $d(x,y) \leq r$ for any $(x,y) \in E$. A subset $E\subseteq X\times X$ is called a \emph{partial translation} if it is an entourage and the projections $E \to X$ onto the first and second coordinates are injective.

Let $\H$ be a separable infinite dimensional Hilbert space, and $\B(\ell^2(X)\otimes \H)$ the set of all bounded operators on $\ell^2(X)\otimes \H$. Any operator $T\in \B(\ell^2(X)\otimes \H)$ has a matrix form
\[
T=[T(x,y)]_{x,y\in X} \quad \text{where} \quad T(x,y)\in \B(\H).
\]
Denote $\K(\H)$ the set of all compact operators on $\H$.
If $T(x,y)\in \K(\H)$ for any $(x,y)\in X\times X$, then $T$ is called \emph{locally compact}.  If the \emph{support of $T$},
\[
\supp(T):=\{(x,y)\in X\times X: T(x,y)\neq 0\},
\]
is an entourage, we say that $T$ has \emph{finite propogation} and its \emph{propagation} is
\[
\ppg(T):=\sup\{d(x,y): (x,y) \in \supp(T)\}.
\] 

\begin{defn}
The set of all locally compact and finite propagation operators on $\ell^2(X)\otimes \H$ forms a $\ast$-subalgebra in $\B(\ell^2(X)\otimes \H)$, denoted by $\mathbb{C}[X]$. Its norm closure in $\B(\ell^2(X)\otimes \H)$ is called the \emph{Roe algebra} of $X$, denoted by $C^*(X)$.
\end{defn}

Recall that a metric space $(X,d)$ can be \emph{coarsely embedded into a Hilbert space} if there exist a map $f: X \to \H$ and non-decreasing functions $\rho_{\pm}: [0,\infty) \to [0,\infty)$ with $\lim_{t\to +\infty}\rho_{\pm}(t) = +\infty$ such that $\rho_-(d(x,y)) \leq \|f(x) - f(y)\| \leq \rho_+(d(x,y))$ for any $x,y\in X$.

\subsection{Groupoids}

Recall that a \emph{groupoid} consists a set $G$, a subset $G^{(0)}$ called the set of \emph{units}, two maps $s,r:G\rightarrow G^{(0)}$ called the \emph{source} and the \emph{range} maps, respectively, a \emph{composition} law $(\gamma_1,\gamma_2)\in G^{(2)}\mapsto \gamma_1 \gamma_2\in G$, where
\[
G^{(2)}=\{(\gamma_1, \gamma_2) \in G \times G: s(\gamma_1)=r(\gamma_2)\},
\]
and an \emph{inverse} map $\gamma\mapsto \gamma^{-1}$. These operations satisfy certain rules, such as the facts that the composition is associative, the elements of $G^{(0)}$ act as units (see \cite[Definition 1.1]{Re} for a precise definition). For $U \subseteq G^{(0)}$, denote $G^U:=r^{-1}(U)$ and $G_U=s^{-1}(U)$. Also simplify the notation by $G^x=r^{-1}(x)$ and $G_x=s^{-1}(x)$ for $x\in G^{(0)}$.

A \emph{locally compact Hausdorff} groupoid is a groupoid $G$ equipped with a locally compact and Hausdorff topology such that the structure maps are continuous, where $G^{(0)}$ and $G^{(2)}$ have the induced topologies. Moreover, such a groupoid $G$ is \emph{\'{e}tale} if the range map (and hence the source map) is a local homeomorphism. In this case, each fibre $G^x$ (and $G_x$) is discrete with the induced topology.

\begin{ex}[pair groupoid]\label{ex:pair groupoid}
The \emph{pair groupoid} of a set $X$ is $X\times X$, whose unit space is $\{(x,x)\in X\times X: x\in X\}$, identified with $X$. The range and the source maps are the projections onto the first and the second coordinates, respectively. The composition rule is given by $(x,y)(y,z)=(x,z)$ for $x,y,z\in X$. When $X$ is discrete, then $X\times X$ is a locally compact Hausdorff \'{e}tale groupoid.
\end{ex}

Recall from \cite{ADR00} (see also \cite[Definition 5.6.13]{BO08}) that a locally compact, Hausdorff and \'{e}tale groupoid $G$ is \emph{(topologically) amenable} if for any $\varepsilon>0$ and compact $K \subseteq G$, there exists $f \in C_c(G)$ with range in $[0,+\infty)$ such that for any $\gamma \in K$ we have
\[
\left| \sum_{\alpha \in G_{r(\gamma)}} f(\alpha) - 1 \right| < \varepsilon  \quad \mbox{and} \quad \sum_{\alpha \in G_{r(\gamma)}} \left|f(\alpha) - f(\alpha\gamma)\right| < \varepsilon.
\]

\subsection{Coarse Groupoid} In \cite{STY}, Skandalis, Tu and Yu suggested a groupoid approach to study Roe algebras. For a discrete metric space $(X,d)$ of bounded geometry, they introduced a groupoid $G(X)$, called the coarse groupoid. As a topological space,
\[
G(X):=\bigcup\limits_{r\geq 0} \overline{E_r}^{\beta(X\times X)}\subseteq \beta(X\times X),
\]
with $E_r=\{(x,y)\in X\times X:d(x,y)\leq r\}$. Recall from Example \ref{ex:pair groupoid} that the pair groupoid $X\times X$ is endowed with the range and source maps $r(x,y)=x$ and $s(x,y)=y$. They can be extended to maps $G(X) \to \beta(X)$, still denoted by $r$ and $s$.

It was shown in \cite[lemma 2.7]{STY} that the pair of maps $(r,s):G(X)\rightarrow \beta(X)\times \beta(X)$ is injective. So $G(X)$ can be endowed with a groupoid structure, namely the \emph{coarse groupoid} of $X$, induced from the pair groupoid $\beta(X)\times \beta(X)$. It was also shown in \cite[Proposition 3.2]{STY} that $G(X)$ is a locally compact Hausdorff \'{e}tale groupoid with unit space $\beta(X)$. Moreover, from \cite[Theorem 5.3]{STY} we know that $G(X)$ is amenable if and only if $X$ has property A (see \cite[Definition 2.1]{Yu00} for the definition).

\subsection{Crossed Product}
Now we quickly review the definition of crossed products for groupoids acting on $C^*$-algebras. 
See \cite{AD, KS04, LeG} for more details.

Let $G$ be a locally compact, Hausdorff and \emph{\'{e}tale} groupoid with an action $\alpha$ on a $C^*$-algebra $A$.
Let $C_c(G; A)$ be the space of functions with compact support $g\mapsto f(g)\in A_{r(g)}$ which are continuous in the sense of \cite{LeG}. Define the product and adjoint operations on $C_c(G; A)$ by
\[
f*g(\gamma)=\sum\limits_{\gamma'\in G^{r(\gamma)}}f(\gamma')\alpha_{\gamma'}(g(\gamma'^{-1}\gamma)),
\]
\[
f^*(\gamma)=\alpha_{\gamma}((f(\gamma^{-1}))^*).
\]
Define a norm on $C_c(G; A)$ by 
\[
\|f\|=\max\left\{\sup\limits_{x\in G^0}\sum\limits_{\gamma\in G^x}\|f(\gamma)\|, \, \sup\limits_{x\in G^0}\sum\limits_{\gamma\in G^x}\|f(\gamma^{-1})\|\right\}.
\]
The {\it full crossed product} $A\rtimes G$ is the enveloping $C^*$-algebra of the completion of $C_c(G; A)$ with respect to $\|\cdot\|$. 

Let $L^2(G; A)$ be the Hilbert $A$-module by taking the completion of the right $A$-module $C_c(G; A)$ with the inner product
\[
\langle \xi,\eta\rangle_x=\sum\limits_{\gamma\in G^x}\xi(\gamma)^*\eta(\gamma) \quad \text{for} \quad x \in G^{(0)},
\]
where the $A$-module is given by $(\xi a)(\gamma)=\xi(\gamma)a_{r(\gamma)}$ with $a=(a_x)_{x\in G^{(0)}}\in A$ as a continuous field of $C^*$-algebras. Then $C_c(G; A)$ acts on $L^2(G; A)$ by 
\[
(\Lambda(f)\xi)(\gamma)=\sum\limits_{\gamma'\in G^{r(\gamma)}}\alpha_{\gamma}(f(\gamma^{-1}\gamma'))\xi(\gamma').
\]
The {\it reduced crossed product} $A\rtimes_r G$ is the norm closure of $\Lambda(C_c(G; A))$ in the $C^*$-algebra $\mathcal{L}(L^2(G; A))$ of all bounded and adjointable operators on $L^2(G; A)$.

\begin{ex}\label{ex:coarse groupoid action}
For a discrete metric space $X$ of bounded geometry, take $A=\ell^\infty(X,\K(\H))$ where $\H$ is a separable infinite-dimensional Hilbert space. There is a natural action of the coarse groupoid $G(X)$ on $A$, and it was shown in \cite[Lemma 4.4]{STY} that $\ell^\infty(X,\K(\H))\rtimes_r G(X)$ is isomorphic to the Roe algebra $C^*(X)$.
\end{ex}

\section{Rank distributions and geometric ideals}\label{sec:rank dist}
In this section, we recall the notion of rank distributions from \cite{CW-rank}, which can be used to characterise geometric ideals of Roe algebras. For later use, we also introduce a notion called spatial rank distribution to provide an alternative description for rank distributions.

\subsection{The notion of rank distributions}\label{ssec:rank dist notion}


Let us recall the following:

\begin{defn}[{\cite[Definition 4.1]{CW-rank}}]
For a metric space $(X,d)$, a non-negative integer valued function $\alpha:X\times X\rightarrow \mathbb{Z}_{\geq 0}$ is called a {\it rank function} if the support of $\alpha$, denoted by $\supp(\alpha):=\{(x,y): \alpha(x,y) \neq 0\}$, is an entourage.
\end{defn}

The set of all rank functions on $X\times X$ can be naturally ordered as follows:
\[
\alpha_1\leq \alpha_2\quad \text{if and only if} \quad  \alpha_1(x,y)\leq \alpha_2(x,y) \quad \text{for any} \quad (x,y)\in X\times X.
\] 
If $\alpha$ is a rank function and $E\subseteq X\times X$ is an entourage, define $E\cdot \alpha$ and $\alpha\cdot E$ by 
\[
(E\cdot\alpha)(x,y)=\sum\limits_{z\in X}\chi_E(x,z)\alpha(z,y),
\]
\[
(\alpha\cdot E)(x,y)=\sum\limits_{z\in X}\alpha(x,z)\chi_E(z,y),
\]
where $\chi_E$ is the characteristic function of $E$. Clearly $E\cdot \alpha$ and $\alpha\cdot E$ are rank functions.

\begin{defn}[{\cite[Definition 4.2]{CW-rank}}]\label{def-rank distribution}
Let $(X,d)$ be a discrete metric space of bounded geometry. A {\it rank distribution} $\R$ on $X$ is a collection of rank functions on $X\times X$ satisfying the following conditions:
\begin{enumerate}
  \item For $\alpha, \beta\in \mathcal{R}$ and an entourage $E\subseteq X\times X$, then $\alpha+\beta$, $\alpha^t$, $E\cdot \alpha$ and $\alpha\cdot E$ belong to $\mathcal{R}$, where $(\alpha+\beta)(x,y)=\alpha(x,y)+\beta(x,y)$ and $\alpha^t(x,y)=\alpha(y,x)$.
  \item If $\alpha$ is a rank function with finite support, then $\alpha\in \mathcal{R}$.
  \item If $\alpha$ is a rank function and $\alpha\leq \beta$ for some $\beta\in \R$, then $\alpha\in \R$.
\end{enumerate}
\end{defn}

To relate a rank distribution to an ideal in the Roe algebra, let us follow the approach in \cite{CW-rank} and start by recalling the notion of $\varepsilon$-truncation introduced therein. 

Let $(\H, \langle \cdot, \cdot \rangle)$ be a separable infinite-dimensional Hilbert space. For non-zero vectors $\xi, \eta \in \H$, let $\xi\otimes \eta$ be the rank-one operator on $\H$ defined by
\[
(\xi\otimes\eta)x=\langle x,\eta\rangle\xi \quad \text{for} \quad  x\in \H.
\]
Given a compact operator $T \in \K(\H)$, it has a canonical decomposition
\[
T=\sum\limits_{i} \lambda_i(T) u_i\otimes v_i,
\]
where $\{\lambda_i(T)\}_{i \in \NN} \subseteq \RR$ is non-increasing and tends to zero, called the \emph{singular values} of $T$, and $\{u_i\}, \{v_i\}$ are orthonormal sets in $\H$. We record the following known facts (see, \emph{e.g.}, \cite[Chapter 11, P. 244]{BS87}):


\begin{lem}\label{property-singular value}
Given $T,S\in \K(\H)$ and $R \in \B(\H)$, we have:
\begin{enumerate}
\item [(1)] $\lambda_i(T)=\lambda_i(T^*)$, where $T^*$ is the adjoint operator of $T$;
\item [(2)] $\lambda_i(RT)\leq \|R\|\lambda_i(T)$ and $\lambda_i(TR)\leq \|R\| \cdot \lambda_i(T)$ for each $i\in \NN$;
\item [(3)] $\lambda_{i+j+1}(T+S)\leq \lambda_{i+1}(T)+\lambda_{j+1}(S)$ for each $i,j\in \NN$.
\end{enumerate}
\end{lem}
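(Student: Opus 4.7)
My plan is to derive all three statements from a single variational characterisation of singular values — the Allakhverdiev/Courant--Fischer formula:
\[
\lambda_{k+1}(T) \;=\; \inf\bigl\{\|T - F\| : F \in \B(\H),\ \rank F \leq k\bigr\}.
\]
The first step is to establish this formula from the canonical decomposition $T = \sum_i \lambda_i(T)\, u_i \otimes v_i$. The upper bound comes from the explicit truncation $F_k = \sum_{i \leq k} \lambda_i(T)\, u_i \otimes v_i$, which has rank at most $k$ and satisfies $\|T - F_k\| = \lambda_{k+1}(T)$. For the lower bound I would use a dimension-counting argument: the span of the first $k+1$ right singular vectors $v_0, \ldots, v_k$ has dimension $k+1$ and therefore meets the kernel of any operator of rank at most $k$, which forces $\|T - F\| \geq \lambda_{k+1}(T)$ for every such $F$. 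This is the only genuinely analytic step and is entirely classical (Chapter~11 of \cite{BS87}).

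With the variational formula in hand, each of (1)--(3) becomes a short bookkeeping argument. For (1), note that $F \mapsto F^*$ preserves both rank and operator norm and $(T - F)^* = T^* - F^*$, so the two infima defining $\lambda_{k+1}(T)$ and $\lambda_{k+1}(T^*)$ coincide. For (2), if $\rank F \leq k$ then $\rank(RF) \leq k$ and $\|RT - RF\| \leq \|R\|\,\|T - F\|$; taking the infimum over $F$ gives the inequality for $RT$, and the argument for $TR$ is symmetric. For (3), given $F, G$ with $\rank F \leq i$ and $\rank G \leq j$ the sum $F + G$ has rank at most $i+j$ and
\[
\|(T+S) - (F+G)\| \leq \|T - F\| + \|S - G\|;
\]
infimising separately over $F$ and then over $G$ yields $\lambda_{i+j+1}(T+S) \leq \lambda_{i+1}(T) + \lambda_{j+1}(S)$.

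The only real obstacle in this plan is the variational formula itself; everything downstream of it is a one-line manipulation. Since the paper invokes this lemma only as a quoted tool, I would cite \cite{BS87} for the characterisation and present just the three short derivations above. A secondary point to watch is the indexing convention: one must be consistent about whether the singular values are labelled starting at $0$ or $1$, and I would align the bookkeeping with whatever convention is already in force in the paper so that the exponents $k+1$, $i+1$, $j+1$, $i+j+1$ match the statement verbatim.
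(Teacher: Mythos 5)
Your proposal is correct. The paper itself gives no proof of this lemma --- it simply records the three facts with a citation to \cite[Chapter 11, p.~244]{BS87} --- so there is no in-paper argument to compare against; what you have written is precisely the classical derivation that the cited reference contains. The variational formula $\lambda_{k+1}(T)=\inf\{\|T-F\|:\operatorname{rank}F\leq k\}$ is established correctly (upper bound by truncating the canonical decomposition, lower bound by intersecting $\operatorname{span}\{v_1,\dots,v_{k+1}\}$ with $\ker F$, which is nontrivial since $\ker F$ has codimension at most $k$), and each of (1)--(3) then follows by the one-line manipulations you describe. Your closing remark about indexing is the right thing to watch: the paper later applies (3) with $j=0$ (in the closedness argument of Lemma 4.7, via $\lambda_{i_0'+1}(T')\leq\lambda_1(T'-T_N)+\lambda_{i_0'+1}(T_N)$), so you should adopt the convention that the singular values are indexed starting at $1$ with $\lambda_1(T)=\|T\|$ and that $i,j$ in (2) and (3) are allowed to be $0$; with that convention your exponents match the statement verbatim and the downstream uses in the paper go through.
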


Given $T\in \K(\H)$ and $\varepsilon>0$, define the \emph{$\varepsilon$-truncation} of $T$ to be
\[
T_\varepsilon=\sum\limits_{i \in \NN:\lambda_i(T)\geq \varepsilon}\lambda_i(T) u_i\otimes v_i.
\]
It is obvious that $T_\varepsilon$ is a finite rank operator on $\H$.

\begin{defn}[{\cite[Definition 3.1]{CW-rank}}]\label{defn:truncation}
The $\varepsilon$-{\it truncation} of $T\in C^*(X)$ is defined to be the operator $T_\varepsilon=[T_\varepsilon(x,y)]_{x,y\in X}$, where $T_\varepsilon(x,y):=T(x,y)_\varepsilon$ is defined above.
\end{defn}

To associate a rank distribution to a non-zero ideal in the Roe algebra, let $T=[T(x,y)]_{x,y\in X}$ be an operator in $C^*(X)$ such that $T(x,y)$ has finite rank for all $(x,y)\in X\times X$. Define a function $\text{RANK}(T):X\times X\rightarrow \mathbb{Z}_{\geq 0}$ by 
\[
\text{RANK}(T)(x,y)=\text{rank}(T(x,y)) \quad \text{for} \quad (x,y) \in X \times X,
\]
where $\text{rank}(T(x,y))$ denotes the rank of $T(x,y)$.
Given a non-zero ideal $I$ in $C^*(X)$, define
\begin{equation}\label{EQ:R(I)}
\mathcal{R}(I):=\{\text{RANK}(T_\varepsilon):T\in I, \varepsilon>0\}.
\end{equation}
It was shown in \cite[Proposition 4.5]{CW-rank} that $\mathcal{R}(I)$ is a rank distribution on $X$.

Conversely, Chen and Wang introduced the following ideal associated to a given rank distribution. Denote $\F(\H)$  the set of all finite-rank operators on $\H$.

\begin{defn}[\cite{CW-rank}]\label{defn:I(R)}
Given a rank distribution $\mathcal{R}$ on $X$, denote 
\[
C_{\mathcal{R}}:=\{T\in \mathbb{C}[X]:T(x,y)\in \F(\H) \text{ for all } x,y\in X \text{ and } \RANK(T)\in \mathcal{R}\}.
\]
The closure of $C_\mathcal{R}$ in $C^*(X)$ is a non-zero ideal in the Roe algebra $C^*(X)$, denoted by $I(\mathcal{R})$, called the \emph{ideal associated to $\R$}.
\end{defn}

It is clear that the ideal $I(\mathcal{R})$ is geometric in the following sense:

\begin{defn}\label{defn:geom ideal}
An ideal $I$ in the Roe algebra $C^*(X)$ is called \emph{geometric} if the finite propagation operators in $I$ are dense in $I$, \emph{i.e.}, $I=\overline{I\cap \mathbb{C}[X]}$.
\end{defn}

The following is the main result in \cite{CW-rank}:

\begin{prop}[{\cite[Theorem 4.6]{CW-rank}}]\label{R=R(I(R))}
Given a rank distribution $\mathcal{R}$ on $X$, then $\mathcal{R}=\mathcal{R}(I(\mathcal{R}))$. Conversely, given a non-zero  geometric ideal $I$ in $C^*(X)$, then $I=I(\mathcal{R}(I))$.
\end{prop}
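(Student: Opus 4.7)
The plan is to prove each of the two equalities by two inclusions, leveraging the definition of $I(\R)$ as the closure of $C_\R$ and the fact that $\R(I)$ is itself a rank distribution (hence downward closed).

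For $\R\subseteq\R(I(\R))$, I would construct an explicit realiser of each $\alpha\in\R$: fix an orthonormal sequence $\{e_i\}_{i\in\NN}$ in $\H$ and set $T(x,y):=\sum_{i=1}^{\alpha(x,y)}e_i\otimes e_i$, a rank-$\alpha(x,y)$ projection of norm at most $1$. Since $\supp(\alpha)$ is an entourage and $X$ has bounded geometry, a Schur-type estimate yields $T\in\mathbb{C}[X]$ with $\RANK(T)=\alpha$; hence $T\in C_\R\subseteq I(\R)$, and $T_\varepsilon=T$ for every $\varepsilon\in(0,1)$, giving $\alpha=\RANK(T_\varepsilon)\in\R(I(\R))$. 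For the reverse $\R(I(\R))\subseteq\R$, take $T\in I(\R)$ and $\varepsilon>0$, and choose $T_n\in C_\R$ with $\|T-T_n\|<\varepsilon/2$. From $\lambda_1((T-T_n)(x,y))\le\|T-T_n\|<\varepsilon/2$, applying Lemma~\ref{property-singular value}(3) with $i=0$ and $j=\RANK(T_n)(x,y)$ gives $\lambda_{j+1}(T(x,y))<\varepsilon$, so $\RANK(T_\varepsilon)\le\RANK(T_n)$ entrywise. Since $\supp(\RANK(T_n))$ is an entourage and $\R$ is downward closed by Definition~\ref{def-rank distribution}(3), one gets $\RANK(T_\varepsilon)\in\R$.

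For the second equality with $I$ a non-zero geometric ideal, the forward inclusion $I\subseteq I(\R(I))$ reduces, via geometricness, to verifying $T\in I(\R(I))$ for each $T\in I\cap\mathbb{C}[X]$. Each truncation $T_\varepsilon$ lies in $\mathbb{C}[X]$ with $\ppg(T_\varepsilon)\le\ppg(T)$, and every entry of $T-T_\varepsilon$ has operator norm $<\varepsilon$; a Schur estimate combining bounded geometry with the uniform propagation bound then yields $\|T-T_\varepsilon\|\le C_{\ppg(T)}\,\varepsilon\to 0$. Since $T_\varepsilon\in C_{\R(I)}$ by the definition of $\R(I)$, we conclude $T\in\overline{C_{\R(I)}}=I(\R(I))$.

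The main obstacle is the reverse inclusion $C_{\R(I)}\subseteq I$. Given $S\in C_{\R(I)}$, there exist $T\in I$ and $\varepsilon>0$ with $\RANK(S)\le\RANK(T_\varepsilon)$. By bounded geometry, $\supp(S)$ is a finite union of partial translations, so one may assume $\supp(S)\subseteq\tau=\{(t(y),y):y\in\mathrm{dom}(t)\}$ for a single partial translation $\tau$. The rank inequality at each pair $(t(y),y)$, combined with the lower bound $\lambda_k(T(t(y),y))\ge\varepsilon$ for $k\le\RANK(S)(t(y),y)$, permits, via polar decomposition, a factorisation $S(t(y),y)=U(y)\,T(t(y),y)\,V(y)$ with $U(y),V(y)\in\K(\H)$ of norm at most $\|S\|/\varepsilon$. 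I would then assemble these $U(y),V(y)$ into diagonally-supported operators in $C^*(X)$ and combine them with the partial-translation multiplier $V_\tau$ associated with $\tau$ to realise $S=ATB$ for some $A,B\in C^*(X)$, which forces $S\in I$ by the ideal property. The delicate step is preventing the product $ATB$ from acquiring matrix entries outside $\tau$; I would address this by first composing with the spectral cut-down sending $T(t(y),y)$ onto its $\ge\varepsilon$-spectral subspace, chosen so that the off-$\tau$ contributions of $T$ are annihilated by the associated finite-rank projections, and relying on uniform norm control of $U(y),V(y)$ to ensure convergence of the assembly in $C^*(X)$.
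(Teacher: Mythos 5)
The paper itself gives no proof of this proposition --- it is quoted directly from \cite[Theorem 4.6]{CW-rank} --- so your attempt can only be judged on its own terms. Three of your four inclusions are correct and essentially routine: the realiser $T(x,y)=\sum_{i=1}^{\alpha(x,y)}e_i\otimes e_i$ (bounded by a Schur estimate, with $T_\varepsilon=T$ for $\varepsilon<1$) gives $\R\subseteq\R(I(\R))$; the singular-value estimate from Lemma \ref{property-singular value}(3) gives $\RANK(T_\varepsilon)\leq\RANK(T_n)$ entrywise and hence $\R(I(\R))\subseteq\R$ by downward closedness; and the norm convergence $T_\varepsilon\to T$ for finite-propagation locally compact $T$ gives $I\subseteq I(\R(I))$ via geometricity.

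The genuine gap is in the inclusion $C_{\R(I)}\subseteq I$, which is the substantive content of the theorem. With $A$ supported diagonally (or on $\tau$) and $B$ diagonal, the product $ATB$ has entries $U(y)\,T(t(y),z)\,V(z)$ for \emph{every} $z$ in the domain of $\tau$, not only $z=y$, and a general $T\in I$ need not even have finite propagation, so there are infinitely many contaminating entries. Your proposed remedy --- choosing the spectral cut-down of $T(t(y),y)$ so that ``the off-$\tau$ contributions of $T$ are annihilated'' --- is not available: the range projection of $T(t(y),y)_\varepsilon$ has no reason whatsoever to be orthogonal to the ranges of $T(t(y),z)$ for $z\neq y$. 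The product $ATB$ is supported on $\tau$ only when the \emph{middle} factor is diagonal, so what is actually needed is a diagonal operator in $I$ whose $(y,y)$-entry has rank at least $\rank(T_\varepsilon(t(y),y))$ and nonzero singular values bounded below; producing such an element \emph{inside the ideal} $I$ is precisely where the work lies, and your sketch neither does this nor invokes the result that makes it possible, namely the truncation theorem \cite[Theorem 3.2]{CW-rank} (that $T_\varepsilon\in\overline{I\cap\CC[X]}$ for $T\in I$), which the present paper itself quotes separately in the proof of Proposition \ref{sandwich lemma}. A correct completion would first move the relevant entries to the diagonal (e.g.\ via $V_\tau^*T$), use the truncation theorem together with a finite averaging over diagonal unitaries to extract a diagonal element of $I$ dominating the required ranks, and only then run your polar-decomposition factorisation with that diagonal element as the middle factor.
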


Proposition \ref{R=R(I(R))} shows that geometric ideals are determined by rank distributions, while this is not the case for general ideals:

\begin{ex}\label{ex:ghost}
Consider the ideal of compact operators in $\B(\ell^2(X) \otimes \H)$. It is easy to calculate that the associated rank distribution consists of all finitely supported rank functions, denoted by $\R_{\mathrm{fin}}$. On the other hand, consider the ideal $\I_{\mathrm{G}}$ consisting of all ghost operators in $C^*(X)$. Here recall that an operator $T \in C^*(X)$ is called a \emph{ghost} (introduced by G. Yu) if for any $\varepsilon>0$, there exists a finite $F \subseteq X$ such that for any $(x,y) \notin F \times F$, then $\|T(x,y)\| < \varepsilon$. Direct calculations show that the rank distribution associated to $\I_{\mathrm{G}}$ is also $\R_{\mathrm{fin}}$. However, we know from \cite{RW14} that in general there exist non-compact ghost operators (\emph{e.g.}, for expander graphs).
\end{ex}

\subsection{Spatial rank distributions}\label{ssec:spatial rank}
Now we provide a new characterisation of rank distributions for later use, inspired by \cite{CW}. Recall that for a metric space $X$, Chen and Wang in \cite{CW} introduced the notion of ideals in $X$ and show that this is indeed another description for geometric ideals in the uniform Roe algebra. Here we show a parallel result in the case of Roe algebras.

\begin{defn}\label{spatial-rank-distribution}
A \emph{spatial rank distribution} $\mathcal{L}$ on a discrete metric space $X$ of bounded geometry is a collection of functions $\varphi: X\rightarrow \mathbb{Z}_{\geq 0}$ satisfying the following:
\begin{enumerate}
\item [(1)] If $\varphi_1, \varphi_2\in \mathcal{L}$, then $\varphi_1+\varphi_2\in \mathcal{L}$.
\item [(2)] If $\supp(\varphi):=\{x\in X: \varphi(x)\neq 0\}$ is finite, then $\varphi\in \mathcal{L}$.
\item [(3)] If $\varphi_1\in \mathcal{L}$ and $\varphi_2\leq \varphi_1$ point-wise, then $\varphi_2\in \mathcal{L}$.
\item [(4)] For $\varphi\in \mathcal{L}$ and $r>0$, the following function $\widetilde{\varphi}:X\rightarrow \mathbb{Z}_{\geq 0}$ is contained in $\mathcal{L}$:
\begin{equation}\label{EQ:tilde phi}
\widetilde\varphi(x)=\max_{y:d(x,y)\leq r}\varphi(y).
\end{equation}
\end{enumerate}
\end{defn}

To relate a spatial rank distribution to a given rank distribution $\mathcal{R}$ on $X$, we define a function 
\[
\varphi_\alpha: X \to \ZZ_{\geq 0} \quad \text{by} \quad \varphi_\alpha(x):=\max_{y\in X} \alpha(x,y)
\]
for $\alpha \in \R$. Then we define
\begin{equation}\label{EQ:L(R)}
\mathcal{L}(\mathcal{R}):=\{\varphi_\alpha: \alpha \in \R\}.
\end{equation}

To see that $\L(\R)$ is a spatial rank distribution, we need an auxiliary lemma. For $\varphi: X \to \mathbb{Z}_{\geq 0}$, define $\Delta_\varphi: X\times X\rightarrow \mathbb{Z}_{\geq 0}$ by $\Delta_\varphi(x,y)=\varphi(x)$ if $x=y$ and $\Delta_\varphi(x,y)=0$ if $x\neq y$. We notice the following:

\begin{lem}\label{lem:spatial rank lem}
Given a rank distribution $\mathcal{R}$ on $X$ and $\varphi \in \L(\R)$, we have $\Delta_\varphi \in \R$.
\end{lem}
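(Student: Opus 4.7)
The plan is to produce a single element of $\R$ that pointwise dominates $\Delta_\varphi$ and then invoke axiom (3) of Definition \ref{def-rank distribution}. Since axiom (3) is the only axiom that lets us land exactly on $\Delta_\varphi$ (whose support sits on the diagonal, whereas $\alpha$ typically does not), the whole game is to cook up such a dominating rank function by applying the multiplication-by-entourage closure to $\alpha$ in the right way.

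By definition of $\L(\R)$ in \eqref{EQ:L(R)}, I first fix $\alpha \in \R$ with $\varphi(x)=\varphi_\alpha(x)=\max_{y\in X}\alpha(x,y)$. The natural choice is the entourage $E:=\supp(\alpha)$; then $E^{-1}$ is also an entourage, so by axiom (1) the rank function $\beta:=\alpha\cdot E^{-1}$ lies in $\R$. A direct computation on the diagonal yields
\[
\beta(x,x)=\sum_{z\in X}\alpha(x,z)\,\chi_{E^{-1}}(z,x)=\sum_{z\in X}\alpha(x,z)\,\chi_E(x,z)=\sum_{z\in X}\alpha(x,z),
\]
where the last equality uses $\supp(\alpha)\subseteq E$. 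Consequently
\[
\beta(x,x)\geq \max_{z\in X}\alpha(x,z)=\varphi(x)=\Delta_\varphi(x,x),
\]
while off the diagonal $\Delta_\varphi$ vanishes, giving $\Delta_\varphi\leq \beta$ pointwise. Applying axiom (3) then concludes $\Delta_\varphi\in\R$.

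The only subtlety worth checking is that $\Delta_\varphi$ is actually a rank function, that is, its support is an entourage; this is automatic, since $\supp(\Delta_\varphi)$ lies in the diagonal $\{(x,x):x\in X\}$ (a trivial entourage), and in any case it is contained in $\supp(\beta)\subseteq E\circ E^{-1}$, which is an entourage. There is no real obstacle here: the heart of the argument is the clean identity $(\alpha\cdot E^{-1})(x,x)=\sum_{z}\alpha(x,z)$, which converts the row-maximum encoded by $\varphi_\alpha$ into an upper bound living in the rank distribution.
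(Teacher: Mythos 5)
Your proof is correct and follows essentially the same route as the paper: both fix $\alpha$ with $\varphi=\varphi_\alpha$, take $E=\supp(\alpha)$, show $(\alpha\cdot E^{-1})(x,x)=\sum_z\alpha(x,z)\geq\varphi(x)$, and conclude via axiom (3) of Definition \ref{def-rank distribution}. Your additional remarks (making the identity $\chi_{E^{-1}}(z,x)=\chi_E(x,z)$ explicit and checking that $\Delta_\varphi$ is a rank function) are fine but not substantively different.
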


\begin{proof}
Assume that $\varphi = \varphi_\alpha$ for some $\alpha \in \R$. Denote $E:=\supp(\alpha)$. Then for any $x\in X$, we have
\[
(\alpha \cdot E^{-1})(x,x) = \sum_{y\in X} \alpha(x,y) \chi_E(x,y) \geq \max_{y\in X} \alpha(x,y) = \Delta_\varphi(x,x).
\]
Hence we have $\Delta_\varphi \leq \alpha \cdot E^{-1}$, which belongs to $\R$.
\end{proof}

Now we have the following:
\begin{lem}
Given a rank distribution $\mathcal{R}$ on $X$, then $\mathcal{L}(\mathcal{R})$ is a spatial rank distribution.
\end{lem}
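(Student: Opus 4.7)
The plan is to verify the four axioms of Definition~\ref{spatial-rank-distribution} one by one, using the preceding Lemma~\ref{lem:spatial rank lem} as the main lever. That lemma provides $\Delta_\varphi\in\R$ for every $\varphi\in\L(\R)$, and since by construction $\varphi_{\Delta_\varphi}(x)=\Delta_\varphi(x,x)=\varphi(x)$, the diagonalisation $\varphi\mapsto\Delta_\varphi$ is an explicit \emph{lift} of a spatial rank function into an honest rank function. This lift is what lets me transport the closure properties of $\R$ to $\L(\R)$ in a uniform way.

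Axioms (2), (3) and (1) then fall out immediately. For (2), if $\supp(\varphi)$ is finite then $\Delta_\varphi$ has finite support on the diagonal and so lies in $\R$ by axiom (2) for $\R$; hence $\varphi=\varphi_{\Delta_\varphi}\in\L(\R)$. For (3), given $\varphi_1\in\L(\R)$ and $\varphi_2\leq\varphi_1$ pointwise, the same pointwise inequality at the level of diagonals gives $\Delta_{\varphi_2}\leq\Delta_{\varphi_1}\in\R$, so axiom (3) for $\R$ puts $\Delta_{\varphi_2}$ in $\R$ and $\varphi_2\in\L(\R)$. For (1), the identity $\Delta_{\varphi_1+\varphi_2}=\Delta_{\varphi_1}+\Delta_{\varphi_2}$ combined with axiom (1) for $\R$ yields $\Delta_{\varphi_1+\varphi_2}\in\R$, and taking the row maximum returns $\varphi_1+\varphi_2\in\L(\R)$.

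Axiom (4) needs a short computation rather than a direct lift. Writing $\varphi=\varphi_\alpha$ with $\alpha\in\R$ and $E_r=\{(x,y)\in X\times X:d(x,y)\leq r\}$, I would bound $\widetilde\varphi$ pointwise by $\varphi_{E_r\cdot\alpha}$: for any $y$ with $d(x,y)\leq r$ and any $z$ realising $\alpha(y,z)=\varphi(y)$, the formula $(E_r\cdot\alpha)(x,z)=\sum_{y': d(x,y')\leq r}\alpha(y',z)\geq\alpha(y,z)=\varphi(y)$ forces $\varphi_{E_r\cdot\alpha}(x)\geq\varphi(y)$, and maximising over such $y$ yields $\varphi_{E_r\cdot\alpha}(x)\geq\widetilde\varphi(x)$. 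Since $E_r\cdot\alpha\in\R$ by axiom (1) for $\R$, the majorant $\varphi_{E_r\cdot\alpha}$ lies in $\L(\R)$, and the already-verified axiom~(3) for $\L(\R)$ then places $\widetilde\varphi$ in $\L(\R)$. No step poses a serious obstacle here: the substantive content is absorbed in Lemma~\ref{lem:spatial rank lem}, and the remainder is just making sure that in each axiom one exhibits an element of $\R$ whose row maximum is \emph{exactly}, not merely at least, the target spatial function.
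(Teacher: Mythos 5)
Your proof is correct and follows essentially the same route as the paper: every axiom is reduced to Lemma \ref{lem:spatial rank lem} together with the closure properties of $\R$, and axiom (4) is handled by dominating $\widetilde{\varphi}$ through a product of $\alpha$ with $E_r$ and then invoking the downward-closure axiom. If anything, your computation for (4) with $E_r\cdot\alpha$ (comparing row maxima and using the already-established axiom (3) for $\L(\R)$) is spelled out more carefully than the paper's one-line appeal to $\alpha\cdot E_r$.
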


\begin{proof}
Condition (1) in Definition \ref{spatial-rank-distribution} follows from Lemma \ref{lem:spatial rank lem}, and condition (2) and (3) hold trivially. Concerning condition (4): Given $\varphi = \varphi_\alpha \in \L(\R)$ for some $\alpha \in \R$, take $r:=\sup\{d(x,y): (x,y) \in \supp(\alpha)\}$. Then we have $\alpha \cdot E_r\in \mathcal{R}$ where $E_r=\{(x,y):d(x,y)\leq r\}$. It is easy to see that $\Delta_{\widetilde{\varphi}} \leq \alpha \cdot E_r$ for $\widetilde{\varphi}$ defined in (\ref{EQ:tilde phi}), and hence we conclude condition (4) in Definition \ref{spatial-rank-distribution}.
\end{proof}

Conversely, given a spatial rank distribution $\mathcal{L}$ on $X$, we define $\mathcal{R}(\mathcal{L})$ to be the collection of rank functions $\alpha: X\times X\rightarrow \ZZ_{\geq 0}$ such that there exists $\varphi\in \mathcal{L}$ with $\alpha(x,y)\leq \varphi(x)$ for all $x,y\in X$. It is easy to check that $\mathcal{R}(\mathcal{L})$ is a rank distribution. Moreover, we have the following, which shows that there is a one-to-one correspondence between rank distributions and their spatial version:

\begin{prop} \label{relations between rank distributions}
Given a rank distribution $\R$ on $X$, we have $\mathcal{R}=\mathcal{R(\mathcal{L}(\mathcal{R}))}$. Given a spatial rank distribution $\L$ on $X$, we have $\mathcal{L}=\mathcal{L(\mathcal{R}(\mathcal{L}))}$.
\end{prop}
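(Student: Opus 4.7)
The plan is to establish each equality by splitting it into two inclusions and invoking only the closure axioms of Definitions \ref{def-rank distribution} and \ref{spatial-rank-distribution} together with Lemma \ref{lem:spatial rank lem}.

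For $\mathcal{R}=\mathcal{R}(\mathcal{L}(\mathcal{R}))$, the inclusion $\mathcal{R}\subseteq\mathcal{R}(\mathcal{L}(\mathcal{R}))$ is immediate: for $\alpha\in\mathcal{R}$, the function $\varphi_\alpha(x)=\max_y\alpha(x,y)$ lies in $\mathcal{L}(\mathcal{R})$ by (\ref{EQ:L(R)}) and trivially dominates $\alpha$ in the first coordinate. The reverse inclusion is the substantive one: given $\alpha\in\mathcal{R}(\mathcal{L}(\mathcal{R}))$, I would pick $\varphi\in\mathcal{L}(\mathcal{R})$ with $\alpha(x,y)\leq\varphi(x)$ and exploit the fact that, since $\alpha$ is a rank function, $\supp(\alpha)\subseteq E_s$ for some $s>0$. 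A direct computation yields $(\Delta_\varphi\cdot E_s)(x,y)=\varphi(x)\chi_{E_s}(x,y)$, which dominates $\alpha$ pointwise. Lemma \ref{lem:spatial rank lem} places $\Delta_\varphi$ in $\mathcal{R}$, condition (1) of Definition \ref{def-rank distribution} lifts this to $\Delta_\varphi\cdot E_s\in\mathcal{R}$, and condition (3) finally forces $\alpha\in\mathcal{R}$.

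For $\mathcal{L}=\mathcal{L}(\mathcal{R}(\mathcal{L}))$, I would send $\varphi\in\mathcal{L}$ to the diagonal rank function $\Delta_\varphi$, whose support sits on the diagonal and is therefore an entourage. The bound $\Delta_\varphi(x,y)\leq\varphi(x)$ exhibits $\Delta_\varphi\in\mathcal{R}(\mathcal{L})$ directly from the definition of $\mathcal{R}(\mathcal{L})$, and $\varphi_{\Delta_\varphi}(x)=\Delta_\varphi(x,x)=\varphi(x)$ identifies $\varphi$ with an element of $\mathcal{L}(\mathcal{R}(\mathcal{L}))$. Conversely, any $\varphi\in\mathcal{L}(\mathcal{R}(\mathcal{L}))$ has the form $\varphi_\alpha$ for some $\alpha\in\mathcal{R}(\mathcal{L})$, so the definition of $\mathcal{R}(\mathcal{L})$ gives $\psi\in\mathcal{L}$ with $\alpha(x,y)\leq\psi(x)$; taking the maximum over $y$ yields $\varphi\leq\psi$ pointwise, and condition (3) of Definition \ref{spatial-rank-distribution} returns $\varphi\in\mathcal{L}$.

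The main (and essentially only) obstacle is the direction $\mathcal{R}(\mathcal{L}(\mathcal{R}))\subseteq\mathcal{R}$, where one must reconstruct an entourage-controlled rank function in $\mathcal{R}$ from only a pointwise bound in the first coordinate; the combination of Lemma \ref{lem:spatial rank lem} with the entourage support of $\alpha$ is designed precisely to overcome this, and the remaining three inclusions are formal unpackings of the definitions that do not even require condition (4) of Definition \ref{spatial-rank-distribution}.
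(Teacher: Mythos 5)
Your proof is correct and follows essentially the same route as the paper: the forward inclusion via $\alpha\leq\varphi_\alpha$ in the first coordinate, and the reverse inclusion by dominating $\alpha$ with $\Delta_\varphi\cdot E$ (the paper takes $E=\supp(\alpha)$ where you take $E_s\supseteq\supp(\alpha)$, an immaterial difference) and invoking Lemma \ref{lem:spatial rank lem} together with conditions (1) and (3) of Definition \ref{def-rank distribution}. The paper omits the second equality as ``similar''; your explicit verification of it via $\Delta_\varphi$ and $\varphi_{\Delta_\varphi}=\varphi$ is exactly the intended argument.
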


\begin{proof}
We only prove the first relation, and the second is similar.

Given $\alpha\in \mathcal{R}$, we have $\alpha(x,y) \leq \varphi_\alpha(x)$ for any $x\in X$. Hence $\alpha \in \mathcal{R(\mathcal{L}(\mathcal{R}))}$. Conversely, given $\alpha \in \mathcal{R(\mathcal{L}(\mathcal{R}))}$, there exists $\tilde{\alpha} \in \R$ such that $\alpha(x,y)\leq \varphi_{\tilde{\alpha}}(x)$ for any $x,y\in X$. By Lemma \ref{lem:spatial rank lem}, we have $\Delta_{\varphi_{\tilde{\alpha}}} \in \R$. Setting $E:=\text{Supp}(\alpha)$, we have $\alpha\leq \Delta_{\varphi_{\tilde{\alpha}}}\cdot E$ and hence, we obtain $\alpha\in \mathcal{R}$.
\end{proof}

Combining with Proposition \ref{R=R(I(R))}, we obtain that non-zero geometric ideals in the Roe algebra of $X$ can also be characterised by spatial rank distributions on $X$.

\section{Ghostly ideals and fibring structures}\label{sec:ghost}

This section is devoted to studying the fibring structures of the lattice of non-zero ideals in Roe algebras and aiming to prove Theorem \ref{thm:structure intro}. Recall from (\ref{EQ:fibring of ideals}) that we have the following map:
\begin{equation*}
\Psi: \{\text{non-zero ideals in }C^*(X)\} \longrightarrow \{\text{rank distributions on }X\}, \quad I \mapsto \R(I).
\end{equation*}
Proposition \ref{R=R(I(R))} shows that $\Psi$ is surjective, while Example \ref{ex:ghost} shows that it is \emph{not} injective in general. 
Following the strategy in Section \ref{sec:intro}, we divide the section into two parts to study the base space of rank distributions and each fibre separately.



\subsection{Ghostly ideals and structure of fibres}\label{ssec:ghostly ideals}


Here we focus on the fibre $\Psi^{-1}(\R)$ for a given rank distribution $\R$, and aim to provide a border for $\Psi^{-1}(\R)$. To achieve, we introduce a new class of ideals in Roe algebras, called the ghostly ideals.


\begin{defn}\label{defn:ghostly ideal}
Given a rank distribution $\mathcal{R}$ on $X$, we define the \emph{associated ghostly ideal} to be 
$\tilde{I}(\mathcal{R}):=\{T\in C^*(X): \text{RANK}(T_\varepsilon)\in \mathcal{R} \text{ for any } \varepsilon>0\}$.
\end{defn}


\begin{lem}\label{lem:indeed an ideal}
Given a rank distribution $\mathcal{R}$ on $X$, $\tilde{I}(\mathcal{R})$ is an ideal in $C^*(X)$.
\end{lem}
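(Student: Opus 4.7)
The plan is to verify each of the defining conditions of a norm-closed, two-sided ideal in $C^*(X)$. The single technical ingredient I will use throughout is the following perturbation estimate for truncations, which is immediate from Lemma \ref{property-singular value}(3) with $j=0$: whenever $A,B\in\K(\H)$ and $\|A-B\|<\eta$, one has $\lambda_{i+1}(A)\le \lambda_{i+1}(B)+\eta$ for every $i$, and hence $\rank(A_\varepsilon)\le \rank(B_{\varepsilon-\eta})$ for any $\varepsilon>\eta$. Two preliminary observations will also be needed. First, for any $T\in C^*(X)$ and $\varepsilon>0$, $\supp(T_\varepsilon)$ is automatically an entourage (take $T'\in\CC[X]$ with $\|T-T'\|<\varepsilon/2$; the perturbation estimate forces $\supp(T_\varepsilon)\subseteq\supp(T')$), so that $\RANK(T_\varepsilon)$ is genuinely a rank function and the condition defining $\tilde{I}(\R)$ is well-posed. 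Second, a standard splitting argument (apply the perturbation estimate to a decomposition $A=\sum_{k=1}^N A_k$) gives $\rank(A_\varepsilon)\le\sum_k \rank((A_k)_{\varepsilon/N})$.

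I would first dispatch the easy structural closures. Scalar multiplication is trivial from $(\lambda T)_\varepsilon=\lambda T_{\varepsilon/|\lambda|}$ for $\lambda\ne 0$. For additivity, applying Lemma \ref{property-singular value}(3) entry-wise at $i=j$ yields $\RANK((S+T)_\varepsilon)\le \RANK(S_{\varepsilon/2})+\RANK(T_{\varepsilon/2})$, which lies in $\R$ by axioms (1) and (3) of Definition \ref{def-rank distribution}. Self-adjointness follows from $\lambda_i(A^*)=\lambda_i(A)$: entry-wise $(T^*)_\varepsilon(x,y)=T_\varepsilon(y,x)^*$, so $\RANK((T^*)_\varepsilon)=\RANK(T_\varepsilon)^t\in \R$. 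Norm closure follows directly from the perturbation estimate: given $T_n\in \tilde{I}(\R)$ with $\|T-T_n\|<\varepsilon/2$, one has $\RANK(T_\varepsilon)\le \RANK((T_n)_{\varepsilon/2})\in \R$.

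The main step, and the one I expect to present the greatest technical bookkeeping, is closure under left (and right) multiplication by $C^*(X)$. I would first treat the case of an $S\in\CC[X]$ with support $E$ of propagation at most $r$, setting $N:=\sup_x \#B(x,r)$. For such $S$, $(ST)(x,y)=\sum_{z:(x,z)\in E}S(x,z)T(z,y)$ is a sum of at most $N$ terms; applying the splitting argument at threshold $\varepsilon/N$, combined with Lemma \ref{property-singular value}(2) to replace $\rank\big((S(x,z)T(z,y))_{\varepsilon/N}\big)$ by $\rank\big(T(z,y)_{\varepsilon/(N\|S\|)}\big)$, yields the pointwise bound
\[
\RANK((ST)_\varepsilon)\ \le\ E\cdot\RANK\!\big(T_{\varepsilon/(N\|S\|)}\big),
\]
which belongs to $\R$ by axioms (1) and (3). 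For a general $S\in C^*(X)$, approximating by $S_n\in \CC[X]$ gives $\|(S-S_n)T\|\to 0$, and a final application of the perturbation estimate reduces the problem to the finite-propagation case $S_nT$ already handled. Right multiplication is symmetric. The main obstacle here is simply choosing the correct $\varepsilon$-thresholds (dividing by $N$ and by $\|S\|$) so that everything reassembles into a rank function of the shape covered by the closure axioms of $\R$; once this bookkeeping is set up, the argument is a mechanical chain of estimates.
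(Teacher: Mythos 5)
Your proof is correct and follows essentially the same route as the paper: the same singular-value inequalities from Lemma \ref{property-singular value} drive the estimates for sums, adjoints, norm limits, and products, with the general product case reduced to finite-propagation $S$ by approximation. The only organisational difference is in the multiplication step, where the paper first reduces to $S$ supported on a partial translation (so each entry of $ST$ is a single product and Lemma \ref{property-singular value}(2) applies directly, the general finite-propagation case then following from additivity), whereas you keep $S$ whole and absorb the sum over at most $N=\sup_x\sharp B(x,r)$ intermediate points by lowering the truncation threshold to $\varepsilon/(N\|S\|)$ --- two equivalent pieces of bookkeeping arriving at the same bound $\RANK((ST)_\varepsilon)\le E\cdot\RANK(T_{\varepsilon/(N\|S\|)})$.
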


\begin{proof}
Given $T,S\in \tilde{I}(\mathcal{R})$ and $\varepsilon>0$, we consider $T+S$ and $T^*$. For $x,y\in X$, set 
\[
i_0:=\max\left\{i \in \NN: \lambda_i(T(x,y))\geq \dfrac{\varepsilon}{2}\right\} \quad \text{and} \quad j_0:=\max\left\{j:\lambda_j(S(x,y)) \geq \dfrac{\varepsilon}{2}\right\}.
\]
Using Lemma \ref{property-singular value}(3), we have 
\[
\lambda_{i_0+j_0+1}((T+S)(x,y)) \leq \lambda_{i_0+1}(T(x,y))+\lambda_{j_0+1}(S(x,y)) <\varepsilon.
\]
This implies that 
\[
\text{rank}\left((T+S)_\varepsilon(x,y)\right) \leq i_0+j_0=\text{rank}\left(T_{\frac{\varepsilon}{2}}(x,y)\right)+\text{rank}\left(S_{\frac{\varepsilon}{2}}(x,y)\right)
\]
for any $x,y\in X$. Hence 
\[
\text{RANK}\left((T+S)_\varepsilon\right) \leq \text{RANK}(T_{\frac{\varepsilon}{2}})+\text{RANK}(S_{\frac{\varepsilon}{2}}),
\]
which shows that $\text{RANK}\left((T+S)_\varepsilon\right)\in \mathcal{R}$. Therefore, $T+S\in \tilde{I}(\mathcal{R})$.

As for $T^*$, note that for any $x,y\in X$ we have $\lambda_i(T(x,y))=\lambda_i(T(x,y)^*)$ for $i\in \NN$ from Lemma \ref{property-singular value}(1). Hence we have
\[
\text{RANK}\left((T^*)_\varepsilon\right)=\left(\text{RANK}(T_\varepsilon)\right)^t \in \mathcal{R},
\]
which shows $T^*\in \tilde{I}(\mathcal{R})$.

To see that $\tilde{I}(\mathcal{R})$ is closed, take a sequence $T_n\in \tilde{I}(\mathcal{R})$ converging to $T' \in C^*(X)$ as $n \to \infty$. Given $\varepsilon'>0$, there exists $N \in \NN$ such that $\|T_N-T'\|\leq \varepsilon'/2$. For any $(x,y)\in X\times X$, set $i'_0:=\max\{i: \lambda_i(T_N(x,y))\geq \varepsilon'/2\}$. Then by Lemma \ref{property-singular value}(3), we have 
\[
\lambda_{i'_0+1}(T'(x,y))\leq \lambda_1(T'(x,y)-T_N(x,y))+\lambda_{i'_0+1}(T_N(x,y))<\varepsilon'.
\]
Hence 
\[
\text{rank}(T'(x,y)_{\varepsilon'}) \leq i'_0=\text{rank}\left(T_N(x,y)_{\varepsilon'/2}\right),
\] 
showing that $\text{RANK}(T'_{\varepsilon'}) \leq \text{RANK}((T_N)_{\varepsilon'/2})$. Therefore, $T'\in \tilde{I}(\mathcal{R})$. 

Finally, given $\hat{T} \in \tilde{I}(\mathcal{R})$ and $\hat{S}\in C^*(X)$, we need to show that $\hat{T}\hat{S}$ and $\hat{S}\hat{T}$ belong to $\tilde{I}(\mathcal{R})$. By the analysis above, we only need consider $\hat{S}\hat{T}$ for 
$\hat{E}:=\text{supp}(\hat{S})$ being a partial translation. Using Lemma \ref{property-singular value}(2), it is easy to check that
\[
\text{RANK}((\hat{S}\hat{T})_{\hat{\varepsilon}}) \leq \hat{E} \cdot \text{RANK}(\hat{T}_{\hat{\varepsilon}/\|\hat{S}\|}) \quad \text{for any} \quad  \hat{\varepsilon}>0.
\] 
Hence we obtain $\text{RANK}((\hat{S}\hat{T})_{\hat{\varepsilon}}) \in \mathcal{R}$, which implies that $\hat{S}\hat{T}\in \tilde{I}(\mathcal{R})$. 
\end{proof}

 
The following shows that the ghostly ideal $\tilde{I}(\R)$ belongs to the fibre $\Psi^{-1}(\R)$:

\begin{lem}\label{lem:R for ghost}
Given a rank distribution $\R$ on $X$, we have $\mathcal{R}(\tilde{I}(\mathcal{R}))=\mathcal{R}$.
\end{lem}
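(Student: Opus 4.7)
The plan is to prove the two inclusions separately, using the previous lemma (closedness of $\tilde{I}(\mathcal{R})$) together with Proposition \ref{R=R(I(R))}.

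For the inclusion $\mathcal{R}(\tilde{I}(\mathcal{R})) \subseteq \mathcal{R}$: this is immediate from the definitions. Indeed, by (\ref{EQ:R(I)}), an element of $\mathcal{R}(\tilde{I}(\mathcal{R}))$ is of the form $\text{RANK}(T_\varepsilon)$ for some $T \in \tilde{I}(\mathcal{R})$ and some $\varepsilon > 0$, and by the very definition of $\tilde{I}(\mathcal{R})$ this belongs to $\mathcal{R}$.

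For the reverse inclusion $\mathcal{R} \subseteq \mathcal{R}(\tilde{I}(\mathcal{R}))$, the strategy is to sandwich the geometric ideal inside the ghostly one, \emph{i.e.}, to show that $I(\mathcal{R}) \subseteq \tilde{I}(\mathcal{R})$, and then appeal to Proposition \ref{R=R(I(R))}. For the sandwiching, pick $T \in C_{\mathcal{R}}$ (cf.\ Definition \ref{defn:I(R)}); then $T$ has finite propagation, each entry $T(x,y)$ has finite rank, and $\text{RANK}(T) \in \mathcal{R}$. For any $\varepsilon>0$ and any $(x,y) \in X \times X$, the entry $T_\varepsilon(x,y)$ is a partial sum of the singular-value expansion of $T(x,y)$, so $\text{rank}(T_\varepsilon(x,y)) \leq \text{rank}(T(x,y))$ pointwise, \emph{i.e.}, $\text{RANK}(T_\varepsilon) \leq \text{RANK}(T)$. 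By axiom (3) of Definition \ref{def-rank distribution}, $\text{RANK}(T_\varepsilon) \in \mathcal{R}$, which shows $T \in \tilde{I}(\mathcal{R})$. Since $\tilde{I}(\mathcal{R})$ is closed in $C^*(X)$ by Lemma \ref{lem:indeed an ideal}, taking closures gives $I(\mathcal{R}) = \overline{C_{\mathcal{R}}} \subseteq \tilde{I}(\mathcal{R})$.

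Finally, the assignment $I \mapsto \mathcal{R}(I)$ is monotone in $I$ (directly from (\ref{EQ:R(I)})), so $I(\mathcal{R}) \subseteq \tilde{I}(\mathcal{R})$ yields $\mathcal{R}(I(\mathcal{R})) \subseteq \mathcal{R}(\tilde{I}(\mathcal{R}))$. Combined with Proposition \ref{R=R(I(R))}, which identifies $\mathcal{R}(I(\mathcal{R})) = \mathcal{R}$, we obtain $\mathcal{R} \subseteq \mathcal{R}(\tilde{I}(\mathcal{R}))$, completing the proof. There is no real obstacle here; the only step requiring care is verifying the pointwise rank inequality $\text{RANK}(T_\varepsilon) \leq \text{RANK}(T)$ for $T \in C_{\mathcal{R}}$, so that axiom (3) of rank distributions may be invoked to pull $\text{RANK}(T_\varepsilon)$ back into $\mathcal{R}$.
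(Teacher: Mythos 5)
Your proof is correct and follows essentially the same route as the paper: the first inclusion is immediate from the definition of $\tilde{I}(\mathcal{R})$, and the reverse inclusion comes from $I(\mathcal{R})\subseteq\tilde{I}(\mathcal{R})$ together with Proposition \ref{R=R(I(R))}. The only difference is that the paper simply declares $I(\mathcal{R})\subseteq\tilde{I}(\mathcal{R})$ to be clear, whereas you spell out the verification (the pointwise bound $\mathrm{RANK}(T_\varepsilon)\leq\mathrm{RANK}(T)$ for $T\in C_{\mathcal{R}}$, axiom (3), and closedness of $\tilde{I}(\mathcal{R})$), which is a correct and welcome elaboration.
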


\begin{proof}
By definition, for any $T\in \tilde{I}(\mathcal{R})$ and $\varepsilon>0$, then $\RANK(T_\varepsilon)\in \mathcal{R}$. Hence $\mathcal{R}(\tilde{I}(\mathcal{R}))\subseteq \mathcal{R}$. Conversely, it is clear that $I(\R) \subseteq \tilde{I}(\R)$ and hence, $\mathcal{R}(\tilde{I}(\mathcal{R})) \supseteq \mathcal{R}(I(\mathcal{R})) = \R$, where the last equality comes from Proposition \ref{R=R(I(R))}.
\end{proof}

Now we are in the position to provide the following sandwiching result:


\begin{prop}\label{sandwich lemma}
For a rank distribution $\R$ on $X$ and a non-zero ideal $I$ in $C^*(X)$ with $\R(I) = \R$, we have $I(\R) \subseteq I \subseteq \tilde{I}(\R)$. Hence for the fibre $\Psi^{-1}(\R)$, the geometric ideal $I(\R)$ is its lower bound and the ghostly ideal $\tilde{I}(\R)$ is its upper bound.
\end{prop}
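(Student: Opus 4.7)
The inclusion $I \subseteq \tilde{I}(\R)$ is immediate from the definitions: for any $T \in I$ and any $\varepsilon > 0$, Equation~\eqref{EQ:R(I)} gives $\RANK(T_\varepsilon) \in \R(I) = \R$, so $T \in \tilde{I}(\R)$ by Definition~\ref{defn:ghostly ideal}.

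For the inclusion $I(\R) \subseteq I$, since $I$ is norm-closed it suffices to show $C_\R \subseteq I$. The plan is to express each $T \in C_\R$ as a finite sum of operators of the form $A \cdot S \cdot B$ with $A, B \in \mathbb{C}[X]$ and $S \in I$; the ideal property of $I$ will then place $T$ in $I$. I would proceed in four steps. \emph{Step 1:} using the bounded geometry of $X$ and the finite propagation of $T$, decompose $\supp(T)$ into finitely many partial translations $E_1, \dots, E_N$ and reduce, by linearity, to a single $T|_E$ supported on a partial translation $E = \{(x, \sigma(x)) : x \in A\}$ with $\sigma : A \to X$ injective. \emph{Step 2:} since $\RANK(T) \in \R = \R(I)$, choose a witness $S \in I$ and $\varepsilon > 0$ with $\rank(T(x, \sigma(x))) \leq \rank(S_\varepsilon(x, \sigma(x)))$ for every $x \in A$. \emph{Step 3:} at each such $x$, use the singular value decomposition of $S(x, \sigma(x))$, whose top $\rank(S_\varepsilon(x, \sigma(x)))$ singular values are all at least $\varepsilon$, to factor $T(x, \sigma(x)) = A_x \, S(x, \sigma(x)) \, B_x$ in $\B(\H)$ with $\|A_x\| \leq \|T\|/\varepsilon$ and $\|B_x\| \leq 1$; this uses the bounds in Lemma~\ref{property-singular value} applied to $S(x, \sigma(x))$. \emph{Step 4:} assemble the entry-wise data $\{A_x, B_x\}_{x \in A}$ into globally defined operators $\tilde A, \tilde B \in \mathbb{C}[X]$ of uniformly bounded norm and propagation comparable to $\diam(E)$, arranged so that $\tilde A \cdot S \cdot \tilde B = T|_E$ as operators on $\ell^2(X) \otimes \H$.

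The delicate point is Step~4. A naive assembly with both $\tilde A$ and $\tilde B$ diagonal places $\tilde A S \tilde B$ on the full rectangle $A \times \sigma(A)$ rather than on $E$, introducing uncontrolled off-$E$ contributions $A_x \, S(x, \sigma(x')) \, B_{x'}$ for distinct $x, x' \in A$. The remedy I expect to use is twofold: first, refine the partition of Step~1 using a bounded-geometry colouring of $A$ into well-separated subsets, so that the off-$E$ entries live far from the near-diagonal band where the matrix entries of $S$ concentrate (using the norm-decay estimate $\|S(u,v)\| \to 0$ as $d(u,v) \to \infty$ for $S \in C^*(X)$); second, let $\tilde A$ or $\tilde B$ carry the partial-translation structure of $E$ in its propagation rather than being diagonal, so that the support of $\tilde A S \tilde B$ collapses onto $E$ and the entry-wise identities of Step~3 glue correctly. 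Uniform norm bounds on $\tilde A, \tilde B$ follow from the $\varepsilon$-threshold in Step~3 together with bounded geometry. This construction is the technical heart of the argument, and is analogous to the one giving $I(\R(I)) \subseteq I$ in the geometric case of Proposition~\ref{R=R(I(R))} in \cite{CW-rank}; crucially, that argument never uses the geometric hypothesis on $I$, so it adapts verbatim once the witness $S \in I$ is supplied by the hypothesis $\R(I) = \R$.
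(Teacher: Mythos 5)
Your first inclusion $I \subseteq \tilde I(\R)$ is exactly the paper's argument. For $I(\R) \subseteq I$ you take a genuinely different, and considerably harder, route. The paper sidesteps the factorization problem entirely: it sets $\mathring I := \overline{I\cap\CC[X]}$, invokes the truncation theorem \cite[Theorem 3.2]{CW-rank} to get $T_\varepsilon\in\mathring I$ for every $T\in I$ and $\varepsilon>0$, concludes $\R(\mathring I)=\R(I)=\R$, and then applies Proposition \ref{R=R(I(R))} to the \emph{geometric} ideal $\mathring I$ to obtain $I(\R)=I(\R(\mathring I))=\mathring I\subseteq I$. The only input beyond the already-quoted geometric correspondence is that $\varepsilon$-truncation does not leave the ideal; all of the ``factor $T$ through a witness'' work stays inside the black box of Proposition \ref{R=R(I(R))}. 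Your proposal re-opens that black box and re-derives the relevant half of \cite[Theorem 4.6]{CW-rank}. That is legitimate: you are right that the inclusion $I(\R(I))\subseteq I$ in Chen--Wang's argument uses only the existence of witnesses $S\in I$ with $\RANK(T)\leq\RANK(S_\varepsilon)$ and not geometricity of $I$, so your closing citation already suffices, and your Steps 1--3 are correct as written.

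The caveat is that Step 4, which you rightly flag as the technical heart, is left at the level of intent, and the sketch hides a quantitative issue. To kill the off-$E$ entries $A_x S(x,\sigma(x'))B_{x'}$ you must replace $S$ by a finite-propagation approximant $F$ with $\|S-F\|<\delta$ and take the separation radius of the colouring larger than $\ppg(F)+\ppg(T)$; since $\ppg(F)\to\infty$ as $\delta\to 0$, the colouring cannot be fixed in advance, and the number $N(\delta)$ of colour classes grows with the radius. A naive triangle inequality over the classes then gives an error of order $N(\delta)\,\delta\,\|T\|/\varepsilon$, which need not tend to zero. The rescue is that each class-by-class error $\tilde A_{c}S\tilde B_{c}-T|_{E\cap c}$ is compressed on the left by $\chi_{A\cap c}\otimes 1$ and on the right by $\chi_{\sigma(A\cap c)}\otimes 1$, and these projections are pairwise orthogonal in $c$ on each side, so the norm of the sum of the errors is the \emph{maximum}, not the sum, of the individual norms. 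With that observation your Step 4 closes; without it (or the equivalent bookkeeping in \cite{CW-rank}) the argument as sketched does not. Either way, the paper's reduction to the geometric sub-ideal via the truncation theorem is the shorter path and is worth knowing.
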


\begin{proof}
For a non-zero ideal $I$ in $C^*(X)$ with $\R(I) = \R$, set $\mathring{I}=\overline{I\bigcap \mathbb{C}[X]}$. Since $\mathring{I}\subseteq I$, then $\mathcal{R}(\mathring{I})\subseteq \R$.
Conversely, for any $T\in I$ and $\varepsilon>0$, we have $T_\varepsilon\in \mathring{I}$ thanks to \cite[Theorem 3.2]{CW-rank}. This implies that $\R\subseteq \mathcal{R}(\mathring{I})$ and hence, $\R = \mathcal{R}(\mathring{I})$. Now applying Proposition \ref{R=R(I(R))}, we have $I(\R) = I(\R(\mathring{I})) = \mathring{I} \subseteq I$.

On the other hand, given $T \in I$ and $\varepsilon>0$, we have $\RANK(T_\varepsilon) \in \R(I) = \R$ by definition. Hence by Definition \ref{defn:ghostly ideal}, we have $T \in \tilde{I}(\R)$. Combining with Lemma \ref{lem:R for ghost}, we conclude the proof.
\end{proof}


\subsection{Invariant open subsets of $\beta X$ and structure of rank distributions}\label{ssec:inv open}
Now we focus on the lattice of rank distributions. 
To start, recall that a subset $U\subseteq \beta X$ is \emph{invariant} if any element $\gamma\in G(X)$ with source in $U$ also has its range in $U$. Given a rank distribution $\mathcal{R}$ on $X$, we define the \emph{associated subset} $U_\R \subseteq \beta X$ as follows:
\begin{equation}\label{EQ:inv open subset}
U_\R:=\bigcup_{\alpha\in \mathcal{R}}\overline{r(\supp(\alpha))},
\end{equation}
where $r: X \times X \to X$ is the range map. Similar to \cite[Lemma 5.2]{CW}, it is easy to see that $U_\R$ is a non-empty invariant open subset of $\beta X$.



Following the outline in Section \ref{sec:intro}, we consider the following map in (\ref{EQ:Phi}):
\begin{equation*}
\Phi: \{\text{rank distributions on }X\} \longrightarrow \{\text{non-empty invariant open subsets of } \beta X\}
\end{equation*}
by $\R \mapsto U_{\R}$.
We will show that $\Phi$ is always surjective but \emph{not} injective in general, and explore the border of each $\Phi^{-1}(U)$.

Let us start with the following:

\begin{defn}\label{defn:min and max}
Given a non-empty invariant open subset $U\subseteq \beta X$, we define the \emph{minimal (or bounded)} rank distribution associated to $U$ to be:
\[
\R_{\min}(U):=\{\alpha: \alpha \text{ is a bounded rank function such that } \overline{r(\supp(\alpha))}\subseteq U\},
\]
and the \emph{maximal} rank distribution associated to $U$ to be:
\[
\R_{\max}(U):=\{\alpha: \alpha \text{ is a rank function such that } \overline{r(\supp(\alpha))}\subseteq U\}.
\]
\end{defn}

The following shows that $\R_{\max}(U)$ and $\R_{\min}(U)$ are indeed rank distributions with the same associated invariant open subset and hence, $\Phi$ is surjective.


\begin{lem}\label{bdd-rank}
Given a non-empty invariant open subset $U\subseteq \beta X$, then $\R_{\min}(U)$ and $\R_{\max}(U)$ are rank distributions on $X$ with $U_{\R_{\max}(U)} = U_{\R_{\min}(U)} = U$. 
\end{lem}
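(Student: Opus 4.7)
The plan is to verify the three axioms of Definition~\ref{def-rank distribution} for both $\R_{\max}(U)$ and $\R_{\min}(U)$, and then identify their associated invariant open subsets with $U$. Axiom (3) is immediate from $\supp(\alpha)\subseteq\supp(\beta)$ when $\alpha\leq\beta$, and axiom (2) is straightforward once one observes that a non-empty invariant open $U\subseteq\beta X$ must meet $X$ (by density of $X$ in $\beta X$) and that $U\cap X$ is saturated under the orbit relation of $G(X)$, so that finite-support rank functions on the relevant components land inside $U$. Inside axiom (1), the cases of $\alpha+\beta$ and $\alpha\cdot E$ reduce to the trivial support inclusions $\supp(\alpha+\beta)\subseteq \supp(\alpha)\cup\supp(\beta)$ and $r(\supp(\alpha\cdot E))\subseteq r(\supp(\alpha))$, which immediately propagate the condition $\overline{r(\supp(\cdot))}\subseteq U$.

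The substantive step is to treat $\alpha^t$ and $E\cdot\alpha$ using the $G(X)$-invariance of $U$. For $\alpha^t$ one needs $\overline{s(\supp(\alpha))}^{\beta X}\subseteq U$; given $\omega$ in this closure, pick a net $y_\lambda\in s(\supp(\alpha))$ converging to $\omega$ together with matching $x_\lambda\in X$ with $(x_\lambda,y_\lambda)\in\supp(\alpha)\subseteq E_r$ for some fixed $r$. By compactness of $\overline{E_r}^{\beta(X\times X)}\subseteq G(X)$, a subnet converges to some $\gamma\in G(X)$ with $s(\gamma)=\omega$ and $r(\gamma)\in\overline{r(\supp(\alpha))}^{\beta X}\subseteq U$; invariance of $U$ then forces $\omega\in U$. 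The case of $E\cdot\alpha$ runs in parallel with an extra intermediate step: one approximates by triples $(x_\lambda,z_\lambda,w_\lambda)$ with $(x_\lambda,z_\lambda)\in E$ and $(z_\lambda,w_\lambda)\in\supp(\alpha)$, extracts subnet limits in the compact sets $\overline{E}^{\beta(X\times X)}$ and $\overline{\supp(\alpha)}^{\beta(X\times X)}$, composes the limits in $G(X)$, and applies invariance to the composed element. For the extra boundedness requirement in $\R_{\min}(U)$, sums and transposes preserve boundedness trivially, and the bounded geometry of $X$ ensures that each entry of $E\cdot\alpha$ or $\alpha\cdot E$ is a sum of at most $\sup_x\sharp B(x,r)$ terms, each at most $\|\alpha\|_\infty$.

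For the identification of the associated invariant open subsets, the inclusions $U_{\R_{\min}(U)}\subseteq U_{\R_{\max}(U)}\subseteq U$ are immediate from the definitions. For the reverse inclusion $U\subseteq U_{\R_{\min}(U)}$, we exploit that $\beta X$ for discrete $X$ admits a basis of clopen sets of the form $\overline{A}^{\beta X}$ with $A\subseteq X$. Given $\omega\in U$, pick such a clopen neighbourhood $\overline{A}^{\beta X}\subseteq U$ and take the $\{0,1\}$-valued diagonal rank function $\alpha_A$ supported on $\{(x,x):x\in A\}$; then $\alpha_A$ is bounded and $\overline{r(\supp(\alpha_A))}^{\beta X}=\overline{A}^{\beta X}\subseteq U$, so $\alpha_A\in\R_{\min}(U)$ and $\omega\in U_{\R_{\min}(U)}$.

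The step I expect to be most delicate is the groupoid compactness argument for $E\cdot\alpha$, where one must juggle subnets in two entourages simultaneously and identify the limit as a composable pair in $G(X)$ before invoking invariance of $U$; the remaining verifications are essentially routine once this argument is in place.
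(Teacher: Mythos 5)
Your proof is correct, and its overall skeleton (verify the axioms of Definition~\ref{def-rank distribution}, then identify the associated open set) matches the paper's. The differences are in how two sub-steps are carried out. For closure under $E\cdot\alpha$, the paper avoids your double-subnet composition argument entirely by the algebraic identity $E\cdot\alpha=(\alpha^t\cdot E^{-1})^t$ together with $r(\supp(\alpha\cdot E'))\subseteq r(\supp(\alpha))$, so that only the transpose case ever invokes invariance of $U$; your direct compactness argument in $G(X)$ is valid but does more work. For the inclusion $U\subseteq U_{\R_{\min}(U)}$, the paper takes the unit $\omega$ at a given $u\in U$ and uses that the sets $\overline{E}^{\beta(X\times X)}$ form a local basis at $\omega$ inside the open set $s^{-1}(U)$, then sets $\alpha=\chi_E$; your argument instead uses that the clopen sets $\overline{A}^{\beta X}$, $A\subseteq X$, form a basis of $\beta X$ and takes a diagonal characteristic function. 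Yours is arguably the more elementary route, as it stays entirely inside $\beta X$ and never touches the groupoid topology; the paper's version produces a possibly non-diagonal $\chi_E$ but is uniform with its other groupoid-theoretic arguments. Finally, you explicitly justify axiom (2) by observing that a non-empty invariant open $U$ must contain all of $X$ (density plus the fact that $X$ is a single $G(X)$-orbit); the paper leaves this as ``routine,'' but it is a genuine point that uses both non-emptiness and openness of $U$, so it is good that you spelled it out.
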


\begin{proof}
Given $\alpha\in \R_{\min}(U)$, it is clear that $\alpha^t\in \R_{\min}(U)$ since $U$ is invariant. Moreover, given an entourage $E\subseteq X\times X$, note that $r(\text{supp}(\alpha\cdot E))=r(\text{supp}(\alpha))$ and $E\cdot \alpha=(\alpha^t\cdot E^{-1})^t$. Hence we have $\alpha\cdot E\in \R_{\min}(U)$ and $E\cdot\alpha\in \R_{\min}(U)$. It is routine to check the other conditions in Definition \ref{def-rank distribution} and therefore, $\R_{\min}(U)$ is a rank distribution. By the same argument, $\R_{\max}(U)$ is also a rank distribution.

On the other hand, we have $U_{\R_{\min}(U)} \subseteq U_{\R_{\max}(U)} \subseteq U$ by construction. Conversely, given $u \in U$, take $\omega \in G(X)$ with $r(\omega) = u$. Note that $U$ is open, then we can find an entourage $E \subseteq X \times X$ such that $\omega \in \overline{E} \subseteq G(X)_{U}$ since all such $\overline{E}$ form a local basis of $\omega$. Taking $\alpha:=\chi_E$, we have $\alpha\in \R_{\min}(U)$ and $u \in \overline{r(\supp(\alpha))}$, which concludes the proof.
\end{proof}

To see that $\R_{\min}(U)$ and $\R_{\max}(U)$ provide the lower and upper bounds for the fibre $\Phi^{-1}(U)$, it is convenient to consult the tool of ideals in coarse structures from \cite{CW}. Recall that the \emph{bounded coarse structure} of $(X,d)$ is given by
\[
\E_d:=\{E \subseteq X \times X: E \text{ is an entourage}\}.
\]
Following \cite[Definition 4.1]{CW}, a non-empty \emph{ideal of $\E_d$} is a subset $\I \subseteq \E_d$ satisfying:
\begin{itemize}
 \item for any $A,B \subseteq \I$, then $A^{-1}$, $A \circ B$ and $A \cup B$ belong to $\I$;
 \item any finite subset of $X \times X$ belongs to $\I$;
 \item for any $A \in \I$ and $B \subseteq A$, then $B \in \I$;
 \item for any $A \in \I$ and $B \in \E_d$, then $A \circ B$ and $B \circ A$ belong to $\I$.
\end{itemize}
It was shown in \cite[Theorem 6.3]{CW} that there is a one-to-one correspondence between the set of all non-empty invariant open subsets of $\beta X$ and the set of all non-empty ideals of $\E_d$. More precisely, the invariant open set associated to a non-empty ideal $\I$ of $\E_d$ is given by
\[
U_{\I}:=\bigcup_{E \in \I} \overline{r(E)}.
\]
It is easy to see that under this correspondence, the map $\Phi$ defined in (\ref{EQ:Phi}) can be translated as follows:
\[
\{\text{rank distributions on }X\} \longrightarrow \{\text{non-empty ideals of } \E_d\}, \quad \R \mapsto \{\supp(\alpha): \alpha \in \R\}. 
\]
Now we prove the following sandwiching result for rank distributions:


\begin{prop}\label{prop:sandwiching for rank dist}
For a non-empty invariant open subset $U\subseteq\beta X$ and a rank distribution $\R$ on $X$ with $U_\R = U$, we have $\R_{\min}(U) \subseteq \R \subseteq \R_{\max}(U)$. 
Equivalently, $\R_{\min}(U)$ and $\R_{\max}(U)$ are the lower and upper bounds for the fibre $\Phi^{-1}(U)$.

Moreover, $\R_{\min}(U) = \R_{\max}(U)$ \emph{if and only if} $U=X$.
\end{prop}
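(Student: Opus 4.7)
The plan is to treat the two inclusions and the final equivalence as three separate tasks.

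The upper bound $\R\subseteq \R_{\max}(U)$ I would dispatch immediately: any $\alpha\in\R$ satisfies $\overline{r(\supp(\alpha))}\subseteq U_\R = U$ by the definition of $U_\R$ in (\ref{EQ:inv open subset}), hence $\alpha\in\R_{\max}(U)$ by Definition \ref{defn:min and max}.

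For the lower bound $\R_{\min}(U)\subseteq \R$, my strategy is to translate both $\R$ and $\R_{\min}(U)$ into ideals of the bounded coarse structure $\E_d$ and invoke the correspondence theorem \cite[Theorem 6.3]{CW}. Namely, to a rank distribution $\mathcal{S}$ I associate $\I_\mathcal{S}:=\{\supp(\alpha):\alpha\in\mathcal{S}\}$, and using Definition \ref{def-rank distribution} verify that $\I_\mathcal{S}$ is an ideal of $\E_d$ in the sense of \cite[Definition 4.1]{CW}: closure under inverses, compositions and unions come from $\alpha^t$, $E\cdot\alpha$ (together with $\supp(E\cdot\alpha)=E\circ\supp(\alpha)$) and $\alpha+\beta$; closure under subsets is obtained by restricting $\alpha$ pointwise to a subentourage $E'\subseteq \supp(\alpha)$; finite entourages come from condition (2); and closure under composition with an arbitrary entourage $B\in\E_d$ from $\chi_A\cdot B\in \mathcal{S}$ for $A\in \I_\mathcal{S}$. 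Since $U_{\I_\mathcal{S}}=U_\mathcal{S}$ tautologically, applying this construction with $\mathcal{S}=\R$ and with $\mathcal{S}=\R_{\min}(U)$ gives $U_{\I_\R}=U_{\I_{\R_{\min}(U)}}=U$, so the one-to-one correspondence of \cite[Theorem 6.3]{CW} forces $\I_\R=\I_{\R_{\min}(U)}$. In particular, for any $\alpha\in\R_{\min}(U)$ bounded by some $M\in\NN$ with $E=\supp(\alpha)$, there is $\beta\in\R$ with $\supp(\beta)=E$; since $\beta$ is integer-valued with $\beta\ge 1$ on $E$, we have $\alpha\le M\beta=\beta+\cdots+\beta\in\R$, so $\alpha\in\R$ by condition (3) of Definition \ref{def-rank distribution}.

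For the equivalence, the ``if'' direction uses that each point of $X$ is isolated in $\beta X$, so a closed subset of $\beta X$ contained in $X$ is compact and discrete, hence finite. Thus any $\alpha\in\R_{\max}(X)$ has finite $r(\supp(\alpha))$; combined with the entourage condition and bounded geometry, $\supp(\alpha)$ is then finite, so $\alpha$ is automatically bounded and lies in $\R_{\min}(X)$. For the ``only if'' direction I would argue by contrapositive: given $U\ne X$, invariance forces $U\not\subseteq X$ (otherwise any $x\in U\cap X$ together with any $y\in X$ gives $(y,x)\in G(X)$ with source $x\in U$, forcing $y\in U$ and hence $U=X$), so there is $\omega\in U\setminus X$. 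Using the standard local basis of $\beta X$ at $\omega$, I pick a (necessarily infinite) subset $A=\{a_n\}_{n\ge 1}\subseteq X$ with $\overline{A}^{\beta X}\subseteq U$, and define a diagonal rank function by $\alpha(a_n,a_n)=n$ and $\alpha=0$ elsewhere. Then $\supp(\alpha)\subseteq E_0$ is an entourage with $\overline{r(\supp(\alpha))}=\overline{A}^{\beta X}\subseteq U$, so $\alpha\in\R_{\max}(U)$; but $\alpha$ is unbounded, so $\alpha\notin \R_{\min}(U)$, which gives the strict containment.

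The main obstacle is the lower bound $\R_{\min}(U)\subseteq \R$: it is not tautological and genuinely requires the machinery of ideals in $\E_d$ from \cite{CW} to transfer information from the invariant open set $U$ back to the rank distribution $\R$.
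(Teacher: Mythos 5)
Your proposal is correct and follows essentially the same route as the paper: the upper bound is immediate from the definition of $U_\R$, the lower bound is obtained by passing to the associated ideals $\{\supp(\alpha):\alpha\in\mathcal{S}\}$ of $\E_d$ and invoking the bijection of \cite[Theorem 6.3]{CW} to match supports before dominating a bounded $\alpha$ by an integer multiple, and the final equivalence is settled by noting $\R_{\min}(X)=\R_{\max}(X)=\R_{\mathrm{fin}}$ and, for $U\neq X$, producing an unbounded diagonal rank function supported on an infinite $Y$ with $\overline{Y}^{\beta X}\subseteq U$. You merely spell out the verification of the $\E_d$-ideal axioms and the choice of $Y$, which the paper leaves implicit.
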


\begin{proof}
It is clear from definition that $\R \subseteq \R_{\max}(U)$ for any $\R \in \Phi^{-1}(U)$. Now we focus on $\R_{\min}(U)$. 
By the analysis above together with Lemma \ref{bdd-rank}, we obtain
\[
\{\supp(\alpha): \alpha \in \R\} = \{\supp(\alpha): \alpha \in \R_{\min}(U)\} 
\]
for any $\R \in \Phi^{-1}(U)$. Therefore, given $\R \in \Phi^{-1}(U)$ and $\alpha \in \R_{\min}(U)$, we can always find $\alpha' \in \R$ with $\supp(\alpha') = \supp(\alpha)$. Since $\alpha$ is bounded, there exists $n\in \NN$ such that $\alpha \leq n \alpha'$. Finally, by condition (3) in Definition \ref{def-rank distribution}, we obtain that $\alpha \in \R$.

For the last statement, note that $\R_{\min}(X) = \R_{\max}(X)=\R_{\mathrm{fin}}$, the family of all finitely supported functions. Conversely, if $U \neq X$, then there exists an infinite subset $Y \subseteq X$ such that $\overline{Y}^{\beta X} \subseteq U$. Taking an unbounded $\ZZ_{\geq 0}$-valued function $\varphi$ on $Y$, we set $\alpha: X \times X \to \ZZ_{\geq 0}$ by $\alpha(x,y) = \varphi(x)$ if $x=y\in Y$, and $0$ otherwise. Then it is clear that $\alpha \in \R_{\max}(U)$ while $\alpha \notin \R_{\min}(U)$.
\end{proof}

\begin{ex}\label{ex:min}
For $U=\beta X$, we notice that $I(\R_{\min}(\beta X))$ is also called the uniform algebra of $X$, which plays an important role in \cite{Spa09}.
\end{ex}

\section{Relations between $I(\mathcal{R})$ and $\tilde{I}(\mathcal{R})$}\label{sec:relation}

As shown in Section \ref{ssec:ghostly ideals}, the ideal structure of the Roe algebra can be simplified once we know that the fibre $\Psi^{-1}(\R)$ consists of a single point for each rank distribution $\R$. Equivalently by Proposition \ref{sandwich lemma}, this suggests us to consider the question whether the geometric ideal $I(\R)$ coincides with the ghostly ideal $\tilde{I}(\R)$. 
In this section, we will focus on this question both for \emph{all} $\R$ in Section \ref{ssec:global version} and for a \emph{fixed} $\R$ in Section \ref{ssec:local version}, and prove Theorems \ref{thm:Ktheory intro}, \ref{thm:counterexample intro} and \ref{thm:partial A intro}.

\subsection{A global version}\label{ssec:global version}
It is known (at least mentioned in \cite{CW-rank}) that if the space has property A, then \emph{all} ideals in the Roe algebra are geometric. Here we provide an alternative and detailed proof using the tools developed above. 

\begin{prop}\label{prop: property A case}
For a metric space $X$ of bounded geometry, the following are equivalent:
\begin{enumerate}
 \item $X$ has property A (or \emph{equivalently}, $G(X)$ is amenable);
 \item $I(\R)=\tilde{I}(\R)$ for any rank distribution $\R$ on $X$;
 \item all ideals in $C^*(X)$ are geometric;
 \item $I(X) = \tilde{I}(X)$.
\end{enumerate}
\end{prop}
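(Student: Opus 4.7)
\medskip
\noindent\textbf{Proof proposal.} The plan is to establish the cycle $(1)\Rightarrow(2)\Rightarrow(3)\Rightarrow(4)\Rightarrow(1)$, where the two implications $(2)\Rightarrow(3)\Rightarrow(4)$ are essentially formal and the content lies in $(1)\Rightarrow(2)$ and the converse $(4)\Rightarrow(1)$.

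\medskip
\noindent\emph{Step 1: $(1)\Rightarrow(2)$.} Fix a rank distribution $\R$. By Proposition \ref{sandwich lemma} it suffices to show $\tilde{I}(\R)\subseteq I(\R)$. The plan is to use amenability of the coarse groupoid $G(X)$ (which is equivalent to property A by \cite{STY}) together with the crossed product description $C^*(X)\cong \ell^\infty(X,\K(\H))\rtimes_r G(X)$ from Example \ref{ex:coarse groupoid action}. Concretely, fix a compact $K\subseteq G(X)$ and $\delta>0$ and produce $f\in C_c(G(X))$ with range in $[0,+\infty)$ as in the definition of amenability. Following the standard construction for groupoid crossed products, such an $f$ yields a positive-type, completely positive, compactly supported Schur-type multiplier $\Phi_f$ on $C^*(X)$ such that (a) $\Phi_f(T)$ has finite propagation for every $T$, (b) each matrix entry $\Phi_f(T)(x,y)$ is a positive-scalar multiple of $T(x,y)$, so $\rank(\Phi_f(T)(x,y))\leq \rank(T(x,y))$ and in particular $\RANK(\Phi_f(T)_\varepsilon)\leq \RANK(T_\varepsilon)$ pointwise, and (c) $\Phi_f(T)\to T$ in norm as we let $K$ exhaust $G(X)$ and $\delta\to 0$. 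Given $T\in \tilde{I}(\R)$, apply $\Phi_f$ for a suitable net to get finite propagation, locally compact approximations $\Phi_f(T)$, then pass to the $\varepsilon$-truncation $\Phi_f(T)_\varepsilon$; by Lemma \ref{property-singular value}(2) and property (b) above, the rank distribution of $\Phi_f(T)_\varepsilon$ is dominated by that of $T_{\varepsilon/\|f\|}$ up to an entourage composition, and hence lies in $\R$. Therefore $\Phi_f(T)_\varepsilon\in C_\R$, so $T\in I(\R)$.

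\medskip
\noindent\emph{Step 2: $(2)\Rightarrow(3)\Rightarrow(4)$.} For $(2)\Rightarrow(3)$, given any non-zero ideal $I\subseteq C^*(X)$, set $\R:=\R(I)$. Proposition \ref{sandwich lemma} gives $I(\R)\subseteq I\subseteq \tilde{I}(\R)$; by hypothesis $I(\R)=\tilde{I}(\R)$, hence $I=I(\R)$, which is geometric by construction. The implication $(3)\Rightarrow(4)$ is immediate: interpreting $I(X)$ and $\tilde{I}(X)$ as the geometric and ghostly ideals associated to the rank distribution $\R_{\mathrm{fin}}$ (so $I(X)=\K$ and $\tilde{I}(X)$ is the ideal $\I_{\mathrm{G}}$ of ghost operators, cf.\ Example \ref{ex:ghost}), the ghost ideal is a particular ideal of $C^*(X)$, which must be geometric by $(3)$, and Proposition \ref{R=R(I(R))} combined with $\R(\I_{\mathrm{G}})=\R_{\mathrm{fin}}$ forces $\I_{\mathrm{G}}=I(\R_{\mathrm{fin}})=\K$.

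\medskip
\noindent\emph{Step 3: $(4)\Rightarrow(1)$.} This is the classical characterisation of property A via ghost operators due to Roe--Willett: if every ghost in $C^*(X)$ is compact, then $X$ has property A. The strategy is the standard one: starting from a non-compact ghost on a non-property-A space one constructs (via expander-type or ultrafilter techniques as in \cite{RW14}) a ghost operator that is not a norm limit of finite-rank matrix operators, contradicting $(4)$. Hence failure of property A forces $\K\subsetneq \I_{\mathrm{G}}$, i.e.\ $I(X)\neq \tilde{I}(X)$.

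\medskip
\noindent\emph{Main obstacle.} The substantive step is $(1)\Rightarrow(2)$: turning amenability of $G(X)$ into a cutoff that simultaneously shrinks propagation \emph{and} controls the rank distribution of the $\varepsilon$-truncation inside $\R$. The key technical point is property (b) of $\Phi_f$ above, namely that the Schur-multiplier form of the approximation only scales each matrix entry and therefore cannot increase its rank, so that membership in the rank distribution $\R$ is preserved all the way through the approximation procedure.
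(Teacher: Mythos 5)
Your proposal is correct, and the peripheral implications match the paper: $(2)\Leftrightarrow(3)$ via Proposition \ref{sandwich lemma}, $(3)\Rightarrow(4)$ trivially, and $(4)\Rightarrow(1)$ by citing \cite{RW14}, exactly as the authors do. The genuine difference is in $(1)\Rightarrow(2)$. The paper does \emph{not} argue via kernels: it uses the crossed-product picture, namely the commutative diagram comparing the full and reduced sequences for $0\to J\rtimes G(X)\to A\rtimes G(X)\to (A/J)\rtimes G(X)\to 0$ with $A=\ell^\infty(X;\K(\H))$ and $J=I(\R)\cap A$, together with Proposition \ref{kernel-ghost} (which identifies $\tilde{I}(\R)$ as the kernel of $\pi_\R$ in the reduced sequence); amenability of $G(X)$ makes the middle and right vertical arrows isomorphisms, and the Five Lemma forces $I(\R)=\tilde{I}(\R)$. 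Your route instead converts amenability into a positive-type, entourage-supported Schur multiplier and pushes $T\in\tilde{I}(\R)$ into $C_\R$ by truncating; the decisive observation that the multiplier only rescales each matrix entry, so $\RANK(\Phi_f(T)_\varepsilon)\leq\RANK(T_{\varepsilon/c})\in\R$ pointwise (no entourage composition is even needed), is exactly right and is what makes the approximation stay inside the fibre. Your argument is more elementary and self-contained---it avoids Proposition \ref{kernel-ghost} and the full-vs-reduced machinery---at the cost of having to verify the standard Schur-multiplier estimates (boundedness by the diagonal supremum, and norm convergence $\Phi_f(T)\to T$ via a finite-propagation $3\varepsilon$-argument as in \cite[Lemma 4.3]{CW-05}), and it is essentially the global analogue of what the paper later does by hand in Theorem \ref{thm:partial A}. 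What the paper's approach buys in exchange is reusability: the same diagram (\ref{EQ:comm diag}) drives the $K$-theory isomorphism in Theorem \ref{thm:K-theory}. One small caveat: in $(4)\Rightarrow(1)$ the Roe--Willett theorem is stated for the uniform Roe algebra, so you should note that a non-compact ghost in $C^*_u(X)$ yields a non-compact ghost in $C^*(X)$ (e.g.\ by tensoring with a rank-one projection); this is routine and the paper glosses over it as well.
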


To achieve, we introduce another picture for ghostly ideals. Given a rank distribution $\R$ on $X$, consider the following short exact sequence:
\[
0 \longrightarrow I(\R)\cap \ell^\infty(X;\K(\H)) \longrightarrow \ell^\infty(X;\K(\H)) \longrightarrow \frac{\ell^\infty(X;\K(\H))}{I(\mathcal{R})\cap \ell^\infty(X;\K(\H))} \longrightarrow 0,
\]
where $\ell^\infty(X;\K(\H))$ denotes the algebra of all bounded maps $X \to \K(\H)$. To simplify the notation, we denote 
\begin{equation}\label{EQ:A and J}
A:=\ell^\infty(X;\K(\H)) \quad \text{and} \quad J:=I(\R)\cap \ell^\infty(X;\K(\H)).
\end{equation}
We consider the natural action of $G(X)$ on $A$ from Example \ref{ex:coarse groupoid action}. Clearly, $J$ is invariant under the $G(X)$-action. Hence we have the following short sequence:
\begin{equation}\label{EQ:short seq}
0 \longrightarrow J \rtimes_r G(X) \longrightarrow A \rtimes_r G(X) \stackrel{\pi_\R}{\longrightarrow} (A/J)\rtimes_r G(X) \longrightarrow 0.
\end{equation}
Recall from \cite[Lemma 4.4]{STY} (see also Example \ref{ex:coarse groupoid action}) that we have $A \rtimes_r G(X) \cong C^*(X)$, and the same method can be applied to show that 
\[
J \rtimes_r G(X) \cong I(\R).
\] 
It is also easy to see that the map $\pi_\R$ is surjective. However in general, (\ref{EQ:short seq}) might \emph{not} be exact in the middle. In fact, we have:

\begin{prop}\label{kernel-ghost}
Given a rank distribution $\R$ on $X$, we have $\ker(\pi_{\R})=\tilde{I}(\mathcal{R})$. Hence we have the following short exact sequence:
\[
0 \longrightarrow \tilde{I}(\R) \longrightarrow C^*(X) \stackrel{\pi_\R}{\longrightarrow} (A/J)\rtimes_r G(X) \longrightarrow 0.
\]
\end{prop}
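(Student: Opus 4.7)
The plan is to characterise both $\ker(\pi_{\R})$ and $\tilde I(\R)$ in terms of the same \emph{partial translation coefficients} of $T$, and then observe that the two characterisations coincide. For any partial translation $E \subseteq X\times X$, let $V_E$ be the associated partial isometry and, for $T\in C^*(X)$, define
\[
f_E^T := E_r(T V_E^*) \in A,
\]
where $E_r: A\rtimes_r G(X)\to A$ is the canonical faithful conditional expectation; concretely $f_E^T(x)=T(x,y)$ for the unique $(x,y)\in E$ and is zero otherwise.

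The first equivalence I would prove is
\[
T\in \tilde I(\R) \iff f_E^T \in J \text{ for every partial translation } E.
\]
The point is that for any $T\in C^*(X)$ and $\varepsilon>0$, approximating $T$ by a finite propagation $T_n$ with $\|T-T_n\|<\varepsilon$ forces $\{(x,y):\|T(x,y)\|\ge\varepsilon\}\subseteq\supp(T_n)$, so $\supp(T_\varepsilon)$ is an entourage. Decompose this entourage into finitely many partial translations $E_1,\dots,E_k$; then, writing $\varphi^{f_E^T}_\varepsilon(x):=\rank(f_E^T(x)_\varepsilon)$, one verifies
\[
\RANK(T_\varepsilon)=\sum_{i=1}^k \RANK(T_\varepsilon)|_{E_i} = \sum_{i=1}^k \Delta_{\varphi^{f_{E_i}^T}_\varepsilon}\cdot E_i.
\]
Combining this identity with Lemma \ref{lem:spatial rank lem} (which gives $\varphi\in\L(\R)\iff\Delta_\varphi\in\R$) and the closure axioms of Definition \ref{def-rank distribution} (sums and $(\,\cdot\,E)$-actions), the desired equivalence follows after noting that $\Delta_{\varphi^{f_E^T}_\varepsilon}=(\Delta_{\varphi^{f_E^T}_\varepsilon}\cdot E)\cdot E^{-1}$ since $f_E^T$ vanishes outside the left projection of $E$.

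The second equivalence is
\[
T\in \ker(\pi_\R) \iff f_E^T \in J \text{ for every partial translation } E.
\]
The forward direction is clean: denoting by $E_r^{\mathrm{quot}}:(A/J)\rtimes_r G(X)\to A/J$ the analogous faithful conditional expectation, the naturality diagram $E_r^{\mathrm{quot}}\circ\pi_\R=\pi\circ E_r$ gives
\[
\pi(f_E^T) = E_r^{\mathrm{quot}}\bigl(\pi_\R(T)V_E^*\bigr) = 0,
\]
so $f_E^T\in \ker\pi=J$. For the reverse direction, by faithfulness of $E_r^{\mathrm{quot}}$ it suffices to show $E_r(T^*T)\in J$. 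One expands $E_r(T^*T)(x)=\sum_y T(y,x)^*T(y,x)$ and uses the partial-translation decomposition together with the $\lambda_{i+j+1}(A+B)\le\lambda_{i+1}(A)+\lambda_{j+1}(B)$ estimate of Lemma \ref{property-singular value}(3) and the hypothesis $f_E^T\in J$ to control $\varphi^{E_r(T^*T)}_\varepsilon$ fibre-by-fibre in $\L(\R)$, then pass to general $T\in C^*(X)$ by approximation from $\mathbb{C}[X]$.

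Putting the two equivalences together yields $\ker(\pi_\R)=\tilde I(\R)$ and the short exact sequence. The main obstacle is the reverse direction of the second equivalence: for $T\in\mathbb{C}[X]$ it is essentially immediate, since $T=\sum_i f_{E_i}^T V_{E_i}$ is a finite sum and each $f_{E_i}^T V_{E_i}\in I(\R)$; but for a general $T\in C^*(X)$ the partial-translation decomposition is infinite and one cannot simply pass $\pi_\R$ inside the sum. The faithful-expectation argument circumvents this, but correctly bounding ranks of sums via the singular-value inequality in Lemma \ref{property-singular value}(3), and justifying the approximation by finite propagation operators in the $(A/J)\rtimes_r G(X)$ norm, is the technical heart of the proof. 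This is the analogue, in the rank-distribution setting, of the classical identification of the ghost ideal with the kernel of the boundary quotient for uniform Roe algebras.
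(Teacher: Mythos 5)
Your argument is correct in outline, but it takes a genuinely different route from the paper's. The paper invokes \cite[Lemma 9]{HLS}: the inclusion $C_c(G(X);B)\to C_0(G(X);B)$ extends to an \emph{injection} $i_B:B\rtimes_r G(X)\to C_0(G(X);B)$, and applying this to both $A$ and $A/J$ in a naturality square with $q_\R:C_0(G(X);A)\to C_0(G(X);A/J)$ gives $\ker(\pi_\R)=i_A^{-1}\bigl(C_0(G(X);J)\bigr)$ in one stroke; membership in $C_0(G(X);J)$ is then unwound via $\varepsilon$-truncations and Proposition \ref{R=R(I(R))} ($T_\varepsilon\in I(\R)$ for all $\varepsilon$ iff $\RANK(T_\varepsilon)\in\R(I(\R))=\R$) to land exactly on Definition \ref{defn:ghostly ideal}. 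You instead use only the conditional expectation onto the unit space, which sees just the diagonal, and compensate with the $T^*T$ trick and faithfulness of $E_r^{\mathrm{quot}}$; the price is the hands-on rank bookkeeping (bounding $\rank\bigl((\sum_j S_j)_\varepsilon\bigr)$ via Lemma \ref{property-singular value}(3) over the boundedly many partial translations, transferring the hypothesis $f_E^T\in J$ to a finite-propagation approximant $F$ via $\varphi^{f_E^F}_\varepsilon\le\varphi^{f_E^T}_{\varepsilon-\delta}$, and using axiom (4) of Definition \ref{spatial-rank-distribution} to move from the fibre over $y$ to the fibre over $x$), which you correctly flag as the technical heart and which does go through. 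You also implicitly rely on the identification of $J$ with $\{f\in A:\varphi^f_\varepsilon\in\L(\R)\ \text{for all}\ \varepsilon>0\}$; this is a routine but necessary lemma (it uses that truncations of $0$-propagation operators converge in norm, so such an $f$ lies in $\overline{C_\R}$). In short: the paper's proof is shorter because the analytic content is outsourced to the HLS embedding, while yours is more self-contained and elementary but must redo that content as explicit singular-value estimates.
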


\begin{proof}
Recall from \cite[Lemma 9]{HLS} that the inclusion map $C_c(G(X); B)\rightarrow C_0(G(X); B)$ can be extended to an injection $i_B: B\rtimes_r G(X)\rightarrow C_0(G(X); B)$ for any $C^*$-algebra $B$ with a $G(X)$-action. Hence we have the following commutative diagram:
\[
	\xymatrixcolsep{3.2pc}\xymatrixrowsep{3.2pc}\xymatrix{
  A\rtimes_r G(X) \ar[d]_-{\textstyle i_A} \ar[r]^-{\textstyle \pi_\mathcal{R}} & (A/J)\rtimes_r G(X) \ar[d]^-{\textstyle i_{A/J}} \\
 C_0(G(X);A)\ar[r]^-{\textstyle q_\R}  & C_0(G(X);A/J),}
\]
where $q_\R$ is induced from the quotient map $A \to A/J$. It is easy to see that $\ker(q_\R) = C_0(G(X);J)$. 
Hence combining with Proposition \ref{R=R(I(R))}, we have
\begin{align*}
T\in \ker(\pi_\mathcal{R}) & \Longleftrightarrow i_A(T) \in C_0(G(X);J)\\
& \Longleftrightarrow T_\varepsilon\in C_c(G(X); J) \text{ for any }\varepsilon > 0\\
& \Longleftrightarrow T_\varepsilon\in I(\R) \text{ for any }\varepsilon > 0\\
& \Longleftrightarrow \RANK(T_\varepsilon) \in \mathcal{R}(I(\mathcal{R}))=\mathcal{R}\text{ for any }\varepsilon > 0\\
& \Longleftrightarrow T\in \tilde{I}(\mathcal{R})
\end{align*}
for any $T \in C^*(X)$, which concludes the proof.
\end{proof}

\begin{proof}[Proof of Proposition \ref{prop: property A case}]
``(1) $\Rightarrow$ (2)'': Let $\R$ be a rank distribution. Consider the following commutative diagram:
\begin{equation}\label{EQ:comm diag}
\xymatrix{
  0\ar[r] & J\rtimes G(X) \ar[d]_{\textstyle \pi} \ar[r] & A\rtimes G(X) \ar[d] \ar[r]& (A/J)\rtimes G(X) \ar[d] \ar[r] &0 \\
  0\ar[r] & \tilde{I}(\mathcal{R}) \ar[r] & A\rtimes_r G(X) \ar[r]& (A/J)\rtimes_r G(X) \ar[r] &0,
   }
\end{equation}
where $A$ and $J$ are defined in (\ref{EQ:A and J}). 
Here $\pi$ is the composition 
\[
\pi: J\rtimes G(X)\longrightarrow J\rtimes_r G(X) \cong I(\mathcal{R})\hookrightarrow \tilde{I}(\mathcal{R}). 
\]
The top horizon line is automatically exact, and the bottom line is also exact due to Proposition \ref{kernel-ghost}. 
Since $G(X)$ is amenable, the middle and right vertical lines are isomorphism by \cite[Corollary 5.6.17]{BO08}. Hence by the Five Lemma, $\pi$ is an isomorphism as well. Therefore, we conclude that condition (2).

``(2) $\Leftrightarrow$ (3)'' follows from Proposition \ref{sandwich lemma}, ``(3) $\Rightarrow$ (4)'' is trivial, and ``(4) $\Rightarrow$ (1)'' is a consequence of the main result of \cite{RW14}.
\end{proof}

Now we consider the $K$-theories of geometric and ghostly ideals. 

\begin{thm}\label{thm:K-theory}
If $X$ is a discrete metric space of bounded geometry which can be coarsely embedded into a Hilbert space, then for any rank distribution $\mathcal{R}$ on $X$, we have an isomorphism
\[
(\iota_\R)_*:K_*(I(\mathcal{R}))\longrightarrow K_*(\tilde{I}(\mathcal{R})) \quad \text{where} \quad \ast=0,1,
\] 
induced by the inclusion map $\iota_\R: I(\mathcal{R})\rightarrow \tilde{I}(\mathcal{R})$. Moreover, for any ideal $I$ in $C^*(X)$, we have an injective homomorphism 
\[
(\iota_I)_*:K_*(\overline{I\cap \mathbb{C}[X]})\longrightarrow K_*(I)  \quad \text{where} \quad \ast=0,1,
\]
induced by the inclusion map $\iota_I: \overline{I\cap \mathbb{C}[X]} \to I$.
\end{thm}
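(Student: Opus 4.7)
The plan is to exploit the commutative diagram~(\ref{EQ:comm diag}) from the proof of Proposition~\ref{prop: property A case} together with K-amenability of the coarse groupoid $G(X)$. Since $X$ coarsely embeds into a Hilbert space, by the work of Skandalis--Tu--Yu \cite{STY} the coarse groupoid $G(X)$ has the Haagerup property, and Tu's theorem on descent for a-T-menable groupoids then yields that $G(X)$ is K-amenable. Consequently, for any $G(X)$-$C^*$-algebra $B$, the canonical quotient map $B \rtimes G(X) \to B \rtimes_r G(X)$ induces an isomorphism on K-theory.

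First I would verify that the top row of~(\ref{EQ:comm diag}) is exact, using exactness of the full crossed product functor applied to $0 \to J \to A \to A/J \to 0$; the bottom row is already exact by Proposition~\ref{kernel-ghost}. Applying K-amenability with coefficients $A$ and $A/J$ makes the middle and right vertical arrows of~(\ref{EQ:comm diag}) into K-isomorphisms. Next, applying the Five Lemma to the morphism of six-term exact sequences in K-theory induced by these two rows forces the left vertical arrow $\pi_*: K_*(J \rtimes G(X)) \to K_*(\tilde{I}(\mathcal{R}))$ to be an isomorphism as well. Since $\pi$ factors as
\[
J \rtimes G(X) \longrightarrow J \rtimes_r G(X) \cong I(\mathcal{R}) \stackrel{\iota_\mathcal{R}}{\hookrightarrow} \tilde{I}(\mathcal{R}),
\]
and the first factor is a K-isomorphism by K-amenability applied with coefficient $J$, we conclude that $(\iota_\mathcal{R})_*$ is an isomorphism.

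For the second part, I would note that for any non-zero ideal $I$ in $C^*(X)$, the argument in the proof of Proposition~\ref{sandwich lemma} shows $\overline{I \cap \mathbb{C}[X]} = I(\mathcal{R}(I))$ and $I \subseteq \tilde{I}(\mathcal{R}(I))$. Hence the inclusion $\iota_I: \overline{I \cap \mathbb{C}[X]} \hookrightarrow I$ composed with $I \hookrightarrow \tilde{I}(\mathcal{R}(I))$ coincides with $\iota_{\mathcal{R}(I)}$, which is a K-isomorphism by the first part. In particular, $(\iota_I)_*$ must be injective.

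The main obstacle lies in invoking K-amenability with the specific coefficient algebras $J$, $A$, and $A/J$, which are typically non-nuclear: this requires Tu's descent argument in full generality rather than any nuclear-coefficient shortcut. A secondary technical point is confirming exactness of the full crossed product functor for \'etale groupoid actions, which underpins the exactness of the top row of~(\ref{EQ:comm diag}) and hence the applicability of the Five Lemma.
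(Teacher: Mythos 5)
Your proposal follows essentially the same route as the paper: the commutative diagram~(\ref{EQ:comm diag}), exactness of the full crossed product for the top row and Proposition~\ref{kernel-ghost} for the bottom, K-amenability coming from coarse embeddability via \cite{STY} and Tu, the Five Lemma, and the same factorisation $I(\mathcal{R}) \subseteq I \subseteq \tilde{I}(\mathcal{R})$ for the injectivity statement. The one step you leave unjustified is the blanket claim that, for \emph{any} $G(X)$-$C^*$-algebra $B$, the map $B \rtimes G(X) \to B \rtimes_r G(X)$ is a K-isomorphism. Tu's theorems (both the a-T-menable $\Rightarrow$ K-amenable implication and the descent statement for K-amenable groupoids) are formulated for second countable groupoids and separable coefficient algebras, whereas $G(X)$ has unit space $\beta X$ and the coefficients $A = \ell^\infty(X;\K(\H))$, $J$, $A/J$ are far from separable. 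You flag an obstacle here but misdiagnose it as non-nuclearity; the real issue is separability and second countability. The paper resolves it by invoking \cite[Lemma 3.3]{STY} to write $G(X) = \beta X \rtimes \mathcal{G}$ for a second countable ample \'etale groupoid $\mathcal{G}$, identifying $B \rtimes_{(r)} G(X) \cong B \rtimes_{(r)} \mathcal{G}$, expressing each coefficient algebra as an inductive limit of separable $\mathcal{G}$-$C^*$-subalgebras, applying Tu's K-isomorphism to each separable piece, and then passing to the limit by continuity of $K$-theory. With that reduction supplied, your argument is complete and coincides with the paper's.
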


\begin{proof}
Given a rank distribution $\R$ on $X$, the diagram (\ref{EQ:comm diag}) induces a commutative diagram on the $K$-theory level:
\begin{scriptsize}
\begin{equation*}\label{EQ:K diag}
\xymatrix{
		\cdots \ar[r] & K_\ast(J\rtimes G(X)) \ar[r] \ar[d]_-{\textstyle \pi_\ast} & K_\ast(A\rtimes G(X)) \ar[r] \ar[d] & K_\ast((A/J)\rtimes G(X)) \ar[r] \ar[d] & K_{\ast+1}(J\rtimes G(X)) \ar[r] \ar[d]_{\textstyle \pi_{\ast+1}} &\cdots\\
		\cdots \ar[r] & K_\ast(\tilde{I}(\R)) \ar[r] & K_\ast(A\rtimes_r G(X)) \ar[r] & K_\ast((A/J)\rtimes_r G(X)) \ar[r] & K_{\ast+1}(\tilde{I}(\R)) \ar[r] &\cdots
	}
\end{equation*}
\end{scriptsize}
where both horizontal lines are exact.

Recall from \cite[Lemma 3.3]{STY} that there is a locally compact, second countable, ample and \'{e}tale groupoid $\mathcal{G}$ such that $G(X)=\beta X\rtimes \mathcal{G}$. It follows that $A\rtimes G(X)\cong A\rtimes \mathcal{G}$ and $A\rtimes_r G(X)\cong A\rtimes_r \mathcal{G}$. Note that the $\mathcal{G}$-$C^*$-algebra $A$ can be written as an inductive limit $\lim\limits_{\overrightarrow{i\in I}}A_i$ of separable $\mathcal{G}$-$C^*$-algebras. From \cite[Theorem 5.4]{STY}, we know that $\mathcal{G}$ is a-T-menable since $X$ is coarsely embeddable and hence, $\mathcal{G}$ is $K$-amenable thanks to \cite[Theorem 0.1]{Tu}. Then \cite[Proposition 4.12]{Tu} shows that the canonical map $K_*(A_i\rtimes \mathcal{G}) \to K_*(A_i\rtimes_r \mathcal{G})$ is an isomorphism for each $i$. As a result, we obtain $K_*(A\rtimes \mathcal{G}) \cong K_*(A\rtimes_r \mathcal{G})$ by taking limits and hence, $K_*(A\rtimes G(X)) \cong K_*(A\rtimes_r G(X))$. Similarly, we have $K_*(J\rtimes G(X)) \cong K_*(J\rtimes_r G(X))$ and $K_*((A/J)\rtimes G(X)) \cong K_*((A/J)\rtimes_r G(X))$. By the Five Lemma, we know $\pi_*$ is an isomorphism and hence $(\iota_{\R})_*$ is an isomorphism.

For the last statement, we assume that $I \neq 0$ without loss of generality. Setting $\mathcal{R}=\mathcal{R}(I)$, then $\overline{I\cap \mathbb{C}[X]}=I(\mathcal{R})$ by Proposition \ref{R=R(I(R))}. Now Proposition \ref{sandwich lemma} implies that $\iota_\R$ can be decomposed as $I(\mathcal{R})\subseteq I\subseteq \tilde{I}(\mathcal{R})$. Hence $(\iota_I)_*$ is injective since $(\iota_\R)_*$ is an isomorphism as shown above.
\end{proof}

\subsection{A local version}\label{ssec:local version}

Now we focus on a local version, \emph{i.e.}, the question when $I(\R) = \tilde{I}(\R)$ for a \emph{fixed} rank distribution $\R$ on $X$. To achieve this, we need the following notion of partial property A, introduced in \cite{WZ23}.


\begin{defn}[{\cite[Definition 6.1]{WZ23}}]\label{defn:partial A}
Let $X$ be a discrete metric space of bounded geometry and $U \subseteq \beta X$ be an invariant open subset. We say that $X$ has \emph{partial property A towards $\beta X\setminus U$} if $G(X)_{\beta X\setminus U}$
is amenable.
\end{defn}

We record the following characterisation of property A for later use:

\begin{lem}[{\cite[Corollary 6.6]{WZ23}}]\label{equi-partial A}
Let $X$ be a discrete metric space of bounded geometry and $U\subseteq \beta X$ be an invariant open subset. Then the following are equivalent:
\begin{enumerate}
\item [(1)] $X$ has partial property A towards $U^c$.
\item [(2)] For any $\varepsilon, R>0$, there exists $S>0$, a subset $Y\subseteq X$ with $\overline{Y}^{\beta X}\subseteq U$ and a positive type kernel $\varphi(x,y):Y^c\times Y^c\rightarrow \mathbb{R}$ such that for any $x,y\in Y^c$,
\begin{itemize}
\item $\varphi(x,y)=\varphi(y,x)$ and $\varphi(x,x)=1$;
\item $|1-\varphi(x,y)|\leq \varepsilon$ if $d(x,y)\leq R$;
\item $\varphi(x,y)=0$ if $d(x,y)\geq S$.
\end{itemize}
\end{enumerate}
\end{lem}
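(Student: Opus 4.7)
The plan is to view Lemma \ref{equi-partial A} as a direct translation of Renault--Anantharaman-Delaroche's characterisation of topological amenability for an \'etale locally compact groupoid into the combinatorial language of the coarse space. By definition, (1) asserts that the restricted groupoid $\mathcal{H}:=G(X)_{\beta X\setminus U}$ is amenable, which by the standard equivalence (cf. \cite[Proposition 2.2.13]{ADR00}) is the same as the existence, for each $\varepsilon>0$ and each compact $K\subseteq \mathcal{H}$, of a continuous positive type function $\phi:\mathcal{H}\to \mathbb{R}$ of compact support with $\phi|_{\mathcal{H}^{(0)}}\equiv 1$ and $|1-\phi(\gamma)|<\varepsilon$ for $\gamma\in K$. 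So the task reduces to translating such $\phi$ back and forth to the kernels $\varphi$ in condition (2).

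For the implication (1) $\Rightarrow$ (2), given $\varepsilon, R>0$, I apply amenability to the compact set $K:=\overline{E_R}^{\beta(X\times X)}\cap \mathcal{H}$ to obtain a positive type $\phi$ supported in $\overline{E_S}^{\beta(X\times X)}\cap \mathcal{H}$ for some $S>0$. Using the one-to-one correspondence between non-empty invariant open subsets of $\beta X$ and ideals of the coarse structure (\cite[Theorem 6.3]{CW}), I can choose a subset $Y\subseteq X$ with $\overline{Y}^{\beta X}\subseteq U$ on which $\phi$ vanishes identically near $Y^c\times Y^c$. Restricting $\phi$ to $X\times X$ yields a kernel $\varphi:Y^c\times Y^c\to \mathbb{R}$; continuity of $\phi$ on $\mathcal{H}$ together with $\phi|_{\mathcal{H}^{(0)}}\equiv 1$ gives $|1-\varphi(x,y)|\leq \varepsilon$ when $d(x,y)\leq R$, and the compact support condition produces the cutoff at $d(x,y)\geq S$. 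Positive-definiteness and symmetry of $\varphi$ are inherited directly from $\phi$.

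For (2) $\Rightarrow$ (1), I reverse the procedure: given kernels $\varphi$ on $Y^c\times Y^c$ vanishing outside $E_S$, I extend $\varphi$ by zero on all of $X\times X$ and then take its continuous extension on $\beta(X\times X)$. The key point is that the set $(Y^c\times Y^c)\cap E_S$ is a clopen subset of $E_S$ whose closure in $\beta(X\times X)$ lies inside $\mathcal{H}$ (because $\overline{Y}^{\beta X}\subseteq U$ and $U$ is invariant), so the extended $\phi$ has compact support in $\mathcal{H}$. Positive type is preserved under taking continuous extensions because the positivity of the matrices $[\varphi(x_i^{-1}x_j)]$ at points of $X$ passes to limits of nets of ultrafilters. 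Since $\varphi\equiv 1$ on the diagonal and $|1-\varphi|<\varepsilon$ on $E_R\cap (Y^c\times Y^c)$, the extension satisfies the positive-type-function criterion for amenability on all compacts of $\mathcal{H}$ (by taking $R,\varepsilon$ suitably, and exhausting $\mathcal{H}$ by the $K$'s above), proving amenability.

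The main technical obstacle is the interchange between the discrete/combinatorial data on $X$ and the topological data on $\mathcal{H}\subseteq \beta(X\times X)$: one must verify carefully that positive type is preserved under both restriction to and continuous extension from $X\times X$, and that the set $Y$ can be chosen to simultaneously realise the support condition for $\phi$ and the containment $\overline{Y}^{\beta X}\subseteq U$. The correspondence of \cite[Theorem 6.3]{CW} supplies precisely the dictionary needed to align ``$Y^c$'' with ``$\mathcal{H}^{(0)}=\beta X\setminus U$'' so that compactness in $\beta(X\times X)$ becomes a finite propagation/support condition in $X$.
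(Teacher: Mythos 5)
The paper does not prove this lemma at all: it is imported verbatim as \cite[Corollary 6.6]{WZ23}, so there is no in-paper argument to compare against. Your overall architecture --- translating amenability of the restricted groupoid $\mathcal{H}:=G(X)_{\beta X\setminus U}$ into combinatorial data on $X$ via the clopen sets $\overline{E_r}$ and the dictionary of \cite[Theorem 6.3]{CW} --- is indeed the right frame, and your treatment of $(2)\Rightarrow(1)$ is essentially sound: extending $\varphi$ by zero gives an element of $C(\overline{E_S})\subseteq C_c(G(X))$, positive semidefiniteness passes to ultrafilter limits of the matrices $[\varphi(t_i(y),t_j(y))]$, and the condition $\overline{Y}^{\beta X}\subseteq U$ guarantees that points of $\mathcal{H}$ are only approached through $Y^c\times Y^c$.

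The direction $(1)\Rightarrow(2)$, however, has a genuine gap. Since $U$ is a non-empty invariant open set it contains all of $X$ (as $X$ is dense and coarsely connected), so $\mathcal{H}^{(0)}=\beta X\setminus U$ lies entirely in the corona and $\mathcal{H}\cap(X\times X)=\emptyset$. Consequently ``restricting $\phi$ to $X\times X$'' is vacuous: the positive type function produced by amenability is defined only on a compact subset of $\overline{E_S}\setminus E_S$, and to obtain a kernel on $Y^c\times Y^c$ you must \emph{extend} it to $E_S$. A Tietze extension does not preserve positive-definiteness, nor the exact normalisation $\varphi(x,x)=1$, and this is precisely where the real work lies. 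The standard repair (and the route taken in \cite{WZ23}) is to avoid positive type functions in this direction: use the approximate-invariant-mean formulation of amenability to get $f\in C_c(\mathcal{H})$ with $f\geq 0$, extend $f$ by Tietze to a non-negative $\tilde f$ on $E_S$ (non-negativity and the $\ell^1$-estimates \emph{are} open conditions, so they persist on a neighbourhood of $\mathcal{H}\cap\overline{E_R}$, which one then converts into a set of the form $(Y^c\times Y^c)\cap E_R$ with $\overline{Y}^{\beta X}\subseteq U$ by a compactness argument on $s$-images), and only then manufacture the positive type kernel by the usual $\ell^1\to\ell^2$ passage $\varphi(x,y)=\langle \tilde f(\cdot,x)^{1/2},\tilde f(\cdot,y)^{1/2}\rangle$ after normalisation. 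Without some such intermediate step your argument does not produce the kernel claimed in (2).
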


Firstly, let us consider the case of maximal rank distributions.

\begin{prop}\label{prop: criteria to ensure I(R) = tilde I(R)}
Let $X$ be a discrete metric space of bounded geometry, $U$ be a non-empty invariant open subset of $\beta X$ and $\R:=\R_{\max}(U)$. If $X$ has partial property A towards $\beta X\setminus U$, then $I(\R) = \tilde{I}(\R)$.
\end{prop}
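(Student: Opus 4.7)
The plan is to prove the nontrivial inclusion $\tilde{I}(\mathcal{R}_{\max}(U))\subseteq I(\mathcal{R}_{\max}(U))$, since the reverse is Proposition~\ref{sandwich lemma}. I fix $T\in \tilde{I}(\mathcal{R}_{\max}(U))$ and $\delta>0$, and aim to construct $S\in C_{\mathcal{R}_{\max}(U)}$ with $\|T-S\|<\delta$. The construction has three ingredients: a finite-propagation approximant $T'\in\mathbb{C}[X]$ of $T$; a subset $Y\subseteq X$ coming from partial property A, enlarged using the ghostly hypothesis on $T$; and an entry-wise singular-value truncation to produce finite-rank matrix entries.

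First I pick $T'\in\mathbb{C}[X]$ close to $T$ and set $R:=\ppg(T')$. Applying Lemma~\ref{equi-partial A} with parameters $(\varepsilon',R)$ for a small $\varepsilon'>0$ yields $Y_0\subseteq X$ with $\overline{Y_0}^{\beta X}\subseteq U$ and a positive-type kernel $\varphi$ on $Y_0^c\times Y_0^c$ that is $\varepsilon'$-close to $1$ on $\{d\le R\}$ and supported in $\{d\le S_0\}$. I then enlarge $Y_0$ iteratively to $Y$ so that $Y\supseteq r(\supp(T_\varepsilon))\cap X$ for some small $\varepsilon>0$ and $N_R(Y)=Y$. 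Both enlargements preserve $\overline{Y}^{\beta X}\subseteq U$: the first because the hypothesis $T\in \tilde{I}(\mathcal{R}_{\max}(U))$ gives $\overline{r(\supp(T_\varepsilon))}^{\beta X}\subseteq U$, and the second because the $G(X)$-invariance of $U$ ensures $\overline{N_R(Z)}^{\beta X}\subseteq U$ whenever $\overline{Z}^{\beta X}\subseteq U$. The restriction $\varphi|_{Y^c\times Y^c}$ is still positive-type.

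With $N_R(Y)=Y$ and $\ppg(T')\le R$, cross terms vanish and $T'=\chi_Y T'\chi_Y+\chi_{Y^c}T'\chi_{Y^c}$. Since $Y\supseteq r(\supp(T_\varepsilon))$, each matrix entry $T(x,y)$ with $x\in Y^c$ has operator norm $<\varepsilon$, so each entry of $\chi_{Y^c}T'\chi_{Y^c}$ has norm at most $\varepsilon+\|T-T'\|$; a standard bounded-geometry estimate then yields
\[
\|\chi_{Y^c}T'\chi_{Y^c}\|\le C(R)(\varepsilon+\|T-T'\|),
\]
where $C(R)$ is the bounded-geometry constant for propagation $R$. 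Taking $S:=(\chi_Y T'\chi_Y)_\eta$ for a small $\eta>0$ produces an operator with finite-rank entries, propagation $\le R$, and $r(\supp(S))\subseteq Y$, hence $\RANK(S)\in\mathcal{R}_{\max}(U)$ and $S\in C_{\mathcal{R}_{\max}(U)}$. The triangle inequality yields $\|T-S\|\le(1+C(R))\|T-T'\|+C(R)(\varepsilon+\eta)$.

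The hard part will be the final parameter balancing. Once $T'$ is fixed the term $C(R)(\varepsilon+\eta)$ is made less than $\delta/2$ by shrinking $\varepsilon$ and $\eta$, but the bound $(1+C(R))\|T-T'\|<\delta/2$ requires $\|T-T'\|<\delta/(2(1+C(R)))$, which is self-referential since refining $T'$ to shrink $\|T-T'\|$ may grow $\ppg(T')$, and $C(R)$ grows with $R$. Resolving this needs a careful ordering of choices: fix any initial approximant to pin down a baseline propagation $R_0$, then refine (by density of $\mathbb{C}[X]$ in $C^*(X)$) to a $T'$ meeting the sharper norm bound, reapplying Lemma~\ref{equi-partial A} and reselecting $\varepsilon,\eta$ relative to the new $R$ once it is pinned down. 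Pushing this orchestration through, and checking that the enlargement of $Y$ remains compatible after each revision, is the technical core of the argument.
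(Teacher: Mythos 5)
There are two genuine gaps. The first is the saturation step: you cannot, in general, enlarge $Y$ to a set with $N_R(Y)=Y$ (where $N_R(Y):=\{x:d(x,Y)\le R\}$) while keeping $\overline{Y}^{\beta X}\subseteq U$. Invariance of $U$ does give $\overline{N_R(Z)}^{\beta X}\subseteq U$ for each \emph{single} application, but forcing $N_R(Y)=Y$ requires passing to the increasing union $\bigcup_n N_{nR}(Y_0)$, and the closure of this union is typically far larger than the union of the closures: if $Y_0$ is a single point and $X$ is coarsely connected, the saturation is all of $X$ and its closure is $\beta X$, no matter what $U$ is. So the cross terms $\chi_Y T'\chi_{Y^c}$ and $\chi_{Y^c}T'\chi_Y$ do not vanish and must be dealt with separately (the paper's Theorem \ref{thm:partial A} keeps all four corners and handles the cross terms via its condition (2), which for $\R_{\max}(U)$ is supplied by Lemma \ref{lem:condition for Rmax}).

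The second gap is more fundamental: the positive type kernel $\varphi$ from Lemma \ref{equi-partial A} is introduced but never actually used, and the estimate that replaces it cannot be repaired by reordering choices. The term $C(R)\,\|T-T'\|$ with $R=\ppg(T')$ is genuinely circular --- shrinking $\|T-T'\|$ may force $R$, hence $C(R)$, to grow, and nothing bounds this race. This is not a technicality: entrywise smallness of $\chi_{Y^c}T\chi_{Y^c}$ does \emph{not} control its operator norm (that is precisely the ghost phenomenon, cf.\ Example \ref{ex:ghost} and Theorem \ref{prop:counterexample}), and partial property A towards $\beta X\setminus U$ is exactly the hypothesis that excludes ghosts over $Y^c$; any correct proof must therefore use $\varphi$ substantively. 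The standard mechanism is the Schur multiplier $M_\varphi$, which has norm $\le 1$ independently of propagation: then $\|M_\varphi(T_4)-M_\varphi(F_4)\|\le\|T_4-F_4\|$ carries no $C(R)$ factor, $\|M_\varphi(F_4)-F_4\|$ is small because $\varphi$ is chosen \emph{after} $R$ is fixed, and $M_\varphi(T_4)$ automatically has finite propagation. That is how the proof of Theorem \ref{thm:partial A} runs. The paper's own proof of the present proposition is different again and much shorter: since $\R$ is maximal, $(A/J)\rtimes G(X)$ and $(A/J)\rtimes_r G(X)$ only see the restriction $G(X)_{\beta X\setminus U}$, which is amenable by hypothesis, so the right vertical map in diagram (\ref{EQ:comm diag}) is an isomorphism and a diagram chase yields surjectivity of $\pi$, hence $I(\R)=\tilde{I}(\R)$.
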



\begin{proof}
Consider the commutative diagram (\ref{EQ:comm diag}). Since $\R$ is maximal, we have:
\[
(A/J)\rtimes G(X) \cong (A/J)\rtimes G(X)_{\beta X \setminus U} \quad \text{and} \quad (A/J)\rtimes_r G(X) \cong (A/J)\rtimes_r G(X)_{\beta X \setminus U}.
\]
Note that the middle vertical line in (\ref{EQ:comm diag}) is always surjective and by assumption, the right vertical line is an isomorphism. Hence via a diagram chasing argument, we obtain that the left vertical line $\pi$ is surjective. Since $\pi$ factors through the inclusion map $\iota_{\R}: I(\R) \to \tilde{I}(\R)$, we conclude the proof.
\end{proof}


However, the situation becomes complicated for general rank distributions. In the following, we will show that partial property A \emph{cannot} ensure that $I(\R) = \tilde{I}(\R)$ even for minimal rank distributions. 


%


\subsubsection{A counterexample}
Our example comes from expander graphs. Let us recall some basic notions.
Let $\mathcal{G}=(V,E)$ be an undirected finite graph with constant degree $\mathrm{degree}(\mathcal{G}) = D$, where  $V$ is the vertex set and $E$ is the edge set. For $v,w \in V$, denote $v \sim w$ if they are connected by an edge. Equip $V$ with the edge path metric.
The \emph{Laplacian} of $\mathcal{G}$ is a bounded linear operator 
$\Delta:\ell^2(V)\rightarrow \ell^2(V)$ defined by
$$\Delta(f)(v)=Df(v)-\sum\limits_{w\in V: w \sim v}f(w)=\sum\limits_{w\in V: w \sim v}(f(v)-f(w)) \quad \text{for} \quad v\in V.$$
It is known that $\Delta$ is positive, $\ppg(\Delta)=1$ and the kernel of $\Delta$ consists of locally constant functions on $V$. Denote $\lambda_1(\Delta)$ the smallest positive eigenvalue of $\Delta$, and $P \in \B(\ell^2(V))$ the orthogonal projection to the kernel of $\Delta$.

\begin{defn}
A sequence of finite graphs $\mathcal{G}_n=(V_n,E_n)$ of constant degree is called a sequence of \emph{expander graphs} if 
\begin{itemize}
\item $\exists K>0$ such that degree$(\mathcal{G}_n)\leq K$ for all $n$;
\item $\sharp V_n\rightarrow \infty$ as $n\rightarrow \infty$;
\item $\exists \varepsilon>0$ such that $\lambda_1(\Delta_n)\geq \varepsilon$ for all $n$.
\end{itemize}
\end{defn}

For a sequence of expander graphs $\mathcal{G}_n=(V_n,E_n)$, denote $d_n$ the edge path metric on $V_n$.
Let $X=\bigsqcup\limits_{n}(V_n,d_n)$ be their coarse disjoint union, \emph{i.e.}, equipping a metric $d$ on $X$ which is $d_n$ on each $V_i$ and $d(V_i, V_j)=i+j + \diam(V_i) + \diam(V_j)$ for $i\neq j$.
Here $\diam(V_i):=\sup\{d_i(v,w): v,w \in V_i\}$ is the diameter of $V_i$.
It is obvious that $(X,d)$ is a discrete metric space of bounded geometry. 

Define the Laplacian $\Delta:=\bigoplus\limits_{n}\Delta_n$ on $\ell^2(X)=\bigoplus\limits_{n}\ell^2(V_n)$, where $\Delta_n$ is the Laplacian on $\ell^2(V_n)$. Then $P=\bigoplus\limits_{n}P_n \in \B(\ell^2(X))$ is the orthogonal projection on the kernel of $\Delta$. Since $\Delta\in \CC[X]$ and $0$ is an isolated point in the spectrum $\sigma(\Delta)$, we have
\[
P=\chi_{\{0\}}(\Delta)\in C^*_u(X),
\]
where $\chi_{\{0\}}$ is the characteristic function of $\{0\}$. 


The following result provides the required example:

\begin{thm}\label{prop:counterexample}
For a sequence of expander graphs $\mathcal{G}_n=(V_n,E_n)$, denote $(X,d)$ their coarse disjoint union. For $\R:=\R_{\min}(\beta X)$, we have:
\begin{enumerate}
 \item there exists a projection $Q \in \tilde{I}(\R)$ such that $\|Q-T\| \geq 1$ for any $T \in I(\R)$ and hence, $I(\R) \neq \tilde{I}(\R)$;
 \item the map $(\iota_{\R})_0: K_0(I(\R)) \to K_0(\tilde{I}(\R))$ is \emph{not} an isomorphism, where $\iota_{\R}: I(\R) \hookrightarrow \tilde{I}(\R)$ is the inclusion map.
\end{enumerate}
\end{thm}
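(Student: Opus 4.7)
My plan is to exhibit a specific projection $Q\in\tilde I(\R)$ witnessing both assertions: it will be at distance $\geq 1$ from every element of $I(\R)$, and its class in $K_0(\tilde I(\R)/I(\R))$ will be non-zero. The idea is to take a direct sum of Kazhdan projections on the expanders twisted by projections of \emph{unbounded} rank in $\K(\H)$. Concretely, fix $r_n\to\infty$ and projections $p_n\in\K(\H)$ with $\rank(p_n)=r_n$, let $P_n\in\B(\ell^2(V_n))$ be the rank-one projection onto constants, and set $Q:=\bigoplus_n P_n\otimes p_n$. Writing $P_n=\chi_{\{0\}}(\Delta_n)$ and approximating $\chi_{\{0\}}$ uniformly on $\sigma(\Delta)\subseteq\{0\}\cup[\varepsilon,\infty)$ by polynomials $q_m$, the operators $T_m:=\bigoplus_n q_m(\Delta_n)\otimes p_n$ lie in $\CC[X]$ and converge in norm to $Q$, so $Q\in C^*(X)$; and for each $\varepsilon>0$, the truncation $Q_\varepsilon$ is supported on the finitely many blocks satisfying $1/\sharp V_n\geq\varepsilon$, whence $\RANK(Q_\varepsilon)\in\R$ and $Q\in\tilde I(\R)$.

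The technical heart of (1) is a quantitative estimate. Take $T\in C_\R$ with $\ppg(T)\leq R$ and entry rank $\leq N$, and assume $\|T\|\leq 2$ (otherwise $\|Q-T\|\geq 1$ trivially). For $n$ so large that $d(V_n,V_m)>R$ for all $m\neq n$, and each $x\in V_n$, the partial sum $S_x:=\sum_{w\in B(x,R)\cap V_n}T(x,w)$ has $\|S_x\|\leq K^R\|T\|$ and $\rank(S_x)\leq M:=NK^R$, where $K$ bounds ball cardinalities in $X$. With $\xi_n:=(\sharp V_n)^{-1/2}\mathbf{1}_{V_n}$, a direct expansion gives
\[
\|Q(\xi_n\otimes e)-T(\xi_n\otimes e)\|^2=\frac{1}{\sharp V_n}\sum_{x\in V_n}\|e-S_xe\|^2
\]
for every unit vector $e\in p_n\H$. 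Summing over an orthonormal basis of $p_n\H$ and using $\rank(S_x)\leq M$ together with the norm bound on $S_x$ to control $|\mathrm{tr}(p_nS_xp_n)|$ and $\|p_nS_xp_n\|_{HS}^2$ by a constant $C=C(N,R,K)$ independent of $n$, I obtain $\sum_e\|e-S_xe\|^2\geq r_n-C$ for each $x$. Averaging over $e$ on the unit sphere of $p_n\H$ and then extracting a unit vector realising the average over $x$ gives $\|Q-T\|\geq\sqrt{1-C/r_n}$; letting $n\to\infty$ with $r_n\to\infty$ produces $\|Q-T\|\geq 1$, which by density extends to all $T\in I(\R)$.

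For part (2), I will construct an ultralimit trace that distinguishes $[\bar Q]$. Let $Q_n:=P_n\otimes p_n$ and define states $\omega_n(T):=r_n^{-1}\mathrm{tr}(Q_nTQ_n)$; on the finite-dimensional corner $Q_n\tilde I(\R)Q_n\cong M_{r_n}(\CC)$ these are normalised matrix traces. Put $\tau:=\lim_\omega\omega_n$ for a free ultrafilter $\omega$ on $\NN$. Then $\tau(Q)=1$, while the entry-wise estimate $|\omega_n(T)|\leq K^RN\|T\|/r_n$ (extracted from the same rank bound used in part (1)) forces $\tau|_{I(\R)}=0$. Any element of the hereditary subalgebra $\overline{Q\tilde I(\R)Q}$ is approximable in norm by finite-propagation elements of the corner, and for such an element $A_\varepsilon$ the commutator $[Q_n,A_\varepsilon]$ vanishes once $d(V_n,V_m)>\ppg(A_\varepsilon)$ for all $m\neq n$; combined with the rank estimate $\rank([Q_n,A])\leq 2r_n$, this forces $\omega_n(AB)-\omega_n(BA)\to 0$ for $A,B\in\overline{Q\tilde I(\R)Q}$, so $\tau$ is tracial there. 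It then descends to a tracial state $\bar\tau$ on $\overline{Q\tilde I(\R)Q}/\overline{QI(\R)Q}$ with $\bar\tau(\bar Q)=1$, whence $[\bar Q]\neq 0$ in $K_0(\tilde I(\R)/I(\R))$ by the standard $K_0$-pairing. The six-term exact sequence for $0\to I(\R)\to\tilde I(\R)\to\tilde I(\R)/I(\R)\to 0$ then prevents $(\iota_\R)_0$ from being surjective.

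The hardest step will be the Hilbert--Schmidt lower bound in (1): I need a single unit vector $e\in p_n\H$ realising the averaged estimate uniformly over $x\in V_n$, and the tension between the rank bound $M=NK^R$ (forced by bounded geometry and the entry-rank constraint) and the dimension $r_n$ is precisely what makes $r_n\to\infty$ essential---if $r_n$ were bounded, $Q$ would in fact lie in $I(\R)$ via the approximation $T_m\otimes p\to P\otimes p$ of the classical ghost projection. A secondary technical point is verifying that the ultralimit $\tau$ is genuinely tracial on the hereditary corner, despite $\omega_n$ failing to be tracial on $\tilde I(\R)$ as a whole; this is exactly where the growing separation $d(V_n,V_m)\to\infty$ of the expander blocks does the real work.
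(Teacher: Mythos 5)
Your part (1) is correct, and it takes a genuinely different route from the paper's. The paper fixes $\rank(q_n)=(\sharp V_n)^4$ and, given $T\in C_{\R}$ with entry ranks bounded by $M$, picks $n_0$ with $\sharp V_{n_0}>M$ and finds a \emph{single} unit vector $v$ in the range of $q_{n_0}$ annihilated by every entry $T(x,y)$ (the entries only ``see'' a subspace of $\H$ of dimension at most $M(\sharp V_{n_0})^2<(\sharp V_{n_0})^4$), so that $(T-Q)\xi=-\xi$ exactly for $\xi$ the normalised constant vector tensored with $v$. Your Hilbert--Schmidt averaging over an orthonormal basis of $p_n\H$ replaces this exact-kernel trick by the asymptotic bound $\|Q-T\|\ge\sqrt{1-C/r_n}$; the computation is sound (the uniformity over $x$ you worry about is not needed, since the quantity you bound from below is already an average over $x\in V_n$), and it has the mild advantage of working for an arbitrary unbounded sequence $r_n$ rather than a superquadratic one.

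Part (2), however, has a genuine gap. The functional $\tau=\lim_\omega r_n^{-1}\mathrm{tr}(Q_n\,\cdot\,Q_n)$ is \emph{not} tracial on $\tilde I(\R)$: choose partial isometries $v_n\in\K(\H)$ with $v_n^*v_n=p_n$ and $v_nv_n^*=p_n'$ orthogonal to $p_n$, and set $A:=\bigoplus_n P_n\otimes v_n$. Then $A\in\tilde I(\R)$ (its truncations are finitely supported, exactly as for $Q$), $A^*A=Q$ and $AA^*=\bigoplus_n P_n\otimes p_n'=:Q'$, yet $\tau(A^*A)=1$ while $\tau(AA^*)=0$. Since $[Q]=[Q']$ in $K_0(\tilde I(\R))$ but $\tau$ separates them, $\tau$ cannot induce a homomorphism on $K_0(\tilde I(\R))$ or on $K_0(\tilde I(\R)/I(\R))$. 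Your fallback to the hereditary corner is essentially correct as far as it goes (the cross-block terms $\mathrm{tr}(Q_nTQ_kSQ_n)$ with $k\neq n$ vanish for finite-propagation approximants, and the $k=n$ term is the genuine finite-dimensional trace), but it only yields $[\bar Q]\neq 0$ in $K_0$ of the corner $\overline{\bar Q D\bar Q}$ of $D:=\tilde I(\R)/I(\R)$. The inclusion of a corner need not be injective on $K_0$: it factors through $K_0$ of the ideal of $D$ generated by $\bar Q$, whose map into $K_0(D)$ can have kernel coming from the index map. Indeed the equivalence $\bar Q\sim\bar Q'$ exhibited above takes place outside your corner ($\bar Q'\bar Q=0$), which is precisely the kind of relation a corner trace cannot detect. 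So the step ``$\bar\tau(\bar Q)=1$, hence $[\bar Q]\neq 0$ in $K_0(\tilde I(\R)/I(\R))$'' is unjustified.

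The paper sidesteps this by using an invariant defined on all of $K_0(C^*(X))$: the block-rank homomorphism $\tau_0:K_0(C^*(X))\to\prod_n\ZZ/\bigoplus_n\ZZ$ of Higson--Lafforgue--Skandalis. One computes $\tau_0([Q])=[((\sharp V_n)^4)_n]$, whereas any class in the image of $K_0(I(\R))$ is represented, after perturbing a $C_{\R}$-approximant to a projection, by ranks of order at most $(\sharp V_n)^3$ (bounded entry rank forces the block to live in $\ell^2(V_n)\otimes W_n$ with $\dim W_n=O((\sharp V_n)^2)$). This is a growth-rate comparison at the level of a genuine $K_0$-homomorphism, not a trace pairing, and that is the missing ingredient your argument needs.
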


\begin{proof}
For each $n\in \NN$, denote $P_n$ the orthogonal projection onto constant functions in $\ell^2(V_n)$ and take $q_n$ to be an orthogonal projection on a $(\sharp V_n)^4$-dimensional subspace of the Hilbert space $\H$ in the definition of $C^*(X)$. Set $Q_n:=P_n \otimes q_n \in \B(\ell^2(V_n)) \otimes \B(\H) \subseteq \B(\ell^2(V_n)\otimes\H)$. Let $Q:=\bigoplus_n Q_n$, then $Q\in C^*(X)$. 

Given $\varepsilon>0$, $Q_\varepsilon$ is finitely supported and hence, we have $Q \in \tilde{I}(\mathcal{R})$. On the other hand, for any $T\in C_{\R}$, $\text{RANK}(T)$ is bounded by some $M$.  Since $T$ has finite propagation, we can choose $N\in \NN$ such that $T$ has the form $T=T_N\oplus(\bigoplus_{n>N}T_n)$, where $T_N\in B(\ell^2(\bigsqcup_{n\leq N}V_n)\otimes \H)$ and $T_n\in B(\ell^2(V_n)\otimes \H)$ for $n>N$. Choose $n_0>N$ such that $\sharp V_{n_0}>M$. Then by the choice of $q_{n_0}$, we can find some unit vector $v\in \H$ such that $q_{n_0}(v)=v$ and $T_{n_0}(x,y)(v)=0$ for all $(x,y)\in V_{n_0}\times V_{n_0}$. Let $\xi=\dfrac{1}{\sqrt{\sharp V_{n_0}}}(v,v,\cdots,v)^T\in \ell^2(V_{n_0})\otimes \H$, then $\|T_{n_0}-Q_{n_0}\|\geq \|(T_{n_0}-Q_{n_0})\xi\|=1$. Hence $\|T-Q\|\geq 1$, which implies that $Q\notin I(\mathcal{R})$.

Concerning $K$-theory, consider the following homomorphism from \cite{HLS}:
\[
\tau_0: K_0(C^*(X)) \longrightarrow \frac{\prod_n K_0(C^*(V_n))}{\bigoplus_n K_0(C^*(V_n))} \cong \frac{\prod \ZZ}{\bigoplus \ZZ}
\]
by taking the rank of each block. It is easy to see that $\tau_0([Q]) = [((\sharp V_n)^4)_{n\in \NN}]$. While on the other hand, we claim that for each $[p] \in K_0(I(\R))$, we can find a representative $(M_n)_{n\in \NN}$ for $\tau_0([p])$ such that the sequence $(M_n/(\sharp V_n)^3)_{n\in \NN}$ is bounded. This implies that $\tau_0([Q]) \notin \tau_0(K_0(I(\R)))$, which concludes (2). 

To see the claim, replacing $\H$ by its amplification, we assume that $p$ is a projection in $I(\R)$. Take $q\in C_{\R}$ with $\|p-q\|< \frac{1}{100}$. By definition, there exists $M>0$ such that $\rank(q(x,y)) \leq M$ for any $x,y\in X$. Hence for each $n\in \NN$, there exists a $2M \cdot (\sharp V_n)^2$-dimensional subspace $W_n \subseteq \H$ such that $\chi_{V_n} q \chi_{V_n} \in \B(\ell^2(V_n) \otimes W_n)$. Therefore for large $n$, $\chi_{V_n} q \chi_{V_n}$ is a quasi-projection and then by doing functional calculus, we obtain an actual projection $q_n \in \B(\ell^2(V_n) \otimes W_n) \subseteq C^*(V_n)$ close to $\chi_{V_n} q \chi_{V_n}$. Note that 
\[
\tau([p]) = [(\rank(q_n))_{n\in \NN}] \quad \text{and} \quad \rank(q_n) \leq 2M \cdot (\sharp V_n)^3.
\]
This concludes the claim and finishes the proof.
\end{proof}


\subsubsection{A criterion}
Theorem \ref{prop:counterexample} shows that partial property A is \emph{not} sufficient to ensure that the geometric and ghostly ideals are the same. In the following, we provide an extra condition to fill the gap.

Given a rank distribution $\mathcal{R}$, recall that $\mathcal{L}(\mathcal{R})$ is the associated spatial rank distribution defined in (\ref{EQ:L(R)}). Let
\[
\Gamma_\R:=\{\phi: X \to \FP(\H) \text{ such that } \rank \circ \phi \in \mathcal{L}(\mathcal{R})\},
\]
which forms a directed set by setting $\phi_1 \leq \phi_2$ if and only if the range of $\phi_1(x)$ is contained in the range of $\phi_2(x)$ for each $x\in X$. Here $\FP(\H)$ denotes the set of all finite rank projections on $\H$. 
For $\phi:X\rightarrow \FP(\H)$, denote the diagonal operator $\Lambda_\phi \in B(\ell^2(X) \otimes \H)$ by $\Lambda_\phi(x,x)=\phi(x)$ for $x\in X$. 

%

\begin{lem}\label{lem:diagonal operator}
Given a rank distribution $\R$ on $X$ and $\phi\in \Gamma_\R$, we have $\Lambda_\phi \in I(\mathcal{R})$.
\end{lem}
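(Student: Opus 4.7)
The plan is to show the stronger statement that $\Lambda_\phi$ already lies in the dense subalgebra $C_\R$ (see Definition \ref{defn:I(R)}), rather than only in its closure $I(\R)$. This reduces the problem to three routine verifications plus one application of Lemma \ref{lem:spatial rank lem}.

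First I would check that $\Lambda_\phi \in \mathbb{C}[X]$. Being a diagonal operator, $\Lambda_\phi$ has propagation zero, so its support is automatically an entourage. Each diagonal entry $\Lambda_\phi(x,x)=\phi(x)$ is a finite-rank projection, hence compact, and all off-diagonal entries vanish; so $\Lambda_\phi$ is locally compact. Boundedness is immediate since each $\phi(x)$ is a projection and $\Lambda_\phi$ is block-diagonal, giving $\|\Lambda_\phi\|\leq 1$.

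Next I would compute the rank function of $\Lambda_\phi$. Directly from the definition,
\[
\RANK(\Lambda_\phi)(x,y)=\begin{cases}\rank(\phi(x)), & x=y,\\ 0, & x\neq y,\end{cases}
\]
so $\RANK(\Lambda_\phi)=\Delta_{\rank\circ\phi}$ in the notation introduced before Lemma \ref{lem:spatial rank lem}. The assumption $\phi\in\Gamma_\R$ means exactly that $\rank\circ\phi\in\mathcal{L}(\mathcal{R})$, and then Lemma \ref{lem:spatial rank lem} yields $\Delta_{\rank\circ\phi}\in\mathcal{R}$.

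Combining the two steps, $\Lambda_\phi$ is a finite propagation, locally compact operator whose entries are finite rank and whose rank function lies in $\R$; that is, $\Lambda_\phi\in C_\R$, and therefore $\Lambda_\phi\in I(\R)$. There is no serious obstacle here: the substance of the argument is already packaged in Lemma \ref{lem:spatial rank lem}, and the rest is bookkeeping about diagonal operators.
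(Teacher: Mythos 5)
Your proof is correct and follows the same route as the paper's: identify $\RANK(\Lambda_\phi)$ with $\Delta_{\rank\circ\phi}$, invoke Lemma \ref{lem:spatial rank lem} to place it in $\R$, and conclude $\Lambda_\phi\in C_\R\subseteq I(\R)$. The extra verification that $\Lambda_\phi\in\mathbb{C}[X]$ is harmless bookkeeping the paper leaves implicit.
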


\begin{proof}
By definition, $\Lambda_\phi$ is a diagonal operator and each entry has finite rank. Moreover, note that $\RANK(\Lambda_\phi) = \Delta_{\rank \circ \phi}$ (defined in Section \ref{ssec:spatial rank}). Since $\rank \circ \phi \in \L(\R)$, then by Lemma \ref{lem:spatial rank lem} we obtain $\RANK(\Lambda_\phi) \in \R$. This means that $\Lambda_\phi \in C_{\R}$ (see Definition \ref{defn:I(R)}) and hence, in $\Lambda_\phi \in I(\R)$.
\end{proof}

%


\begin{thm}\label{thm:partial A}
Let $X$ be a discrete metric space of bounded geometry, and $\R$ be a rank distribution on $X$ with $U:=U_\R$. Consider the following conditions:
\begin{enumerate}
\item [(1)] $X$ has partial property A towards to $\beta X \setminus U$;
\item [(2)] for any $T\in \tilde{I}(\mathcal{R}), \varepsilon>0$ and $Y \subseteq X$ with $\overline{Y}^{\beta X} \subseteq U$, there exists $\phi_0 \in \Gamma_\R$ such that 
\[
\|\Lambda_\phi \chi_Y T-\chi_{\supp(\phi)\cap Y} T\|<\varepsilon \quad \text{whenever} \quad \phi \geq \phi_0 \quad \text{in} \quad \Gamma_\R;
\]
\item [(3)] $I(\mathcal{R})=\tilde{I}(\mathcal{R})$.
\end{enumerate}
Then $(1)+(2) \Rightarrow (3) \Rightarrow (2)$.
\end{thm}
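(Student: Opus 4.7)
The plan is to prove the two implications separately. For (3)$\Rightarrow$(2), given $T \in \tilde{I}(\R)=I(\R)$, $\varepsilon>0$ and $Y$ with $\overline{Y}^{\beta X}\subseteq U$, I would pick $S\in C_\R$ with $\|T-S\|<\varepsilon/2$ and define $\phi_0\in\Gamma_\R$ so that $\phi_0(x)$ is the projection onto the closed span of $\bigcup_y \mathrm{Range}(S(x,y))$. The bound $\rank(\phi_0(x))\leq\sum_y\rank(S(x,y))=(\RANK(S)\cdot E^{-1})(x,x)\leq\varphi_{\RANK(S)\cdot E^{-1}}(x)$, where $E=\supp(\RANK(S))$, together with closure of $\L(\R)$ under domination, gives $\rank\circ\phi_0\in\L(\R)$. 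For any $\phi\geq\phi_0$ in $\Gamma_\R$ one then has the entry-wise identity $\Lambda_\phi\chi_Y S=\chi_{\supp(\phi)\cap Y}S$ (since $\phi(x)$ contains every $\mathrm{Range}(S(x,y))$ and $\supp(\phi_0)\supseteq\{x:S(x,\cdot)\neq 0\}$), and the triangle inequality yields $\|\Lambda_\phi\chi_Y T-\chi_{\supp(\phi)\cap Y}T\|\leq 2\|T-S\|<\varepsilon$.

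For (1)+(2)$\Rightarrow$(3), fix $T\in\tilde{I}(\R)$ and $\varepsilon>0$, approximate $T$ by $T_0\in\mathbb{C}[X]$ of finite propagation $R$ within $\varepsilon/4$, and apply Lemma \ref{equi-partial A} with $(R,\varepsilon_1)$ (where $\varepsilon_1$ is to be chosen) to obtain $S>0$, $Y\subseteq X$ with $\overline{Y}^{\beta X}\subseteq U$, and a positive type kernel $\varphi$ on $Y^c\times Y^c$ supported in $\{d\leq S\}$. Decompose $T=\chi_Y T+\chi_{Y^c}T\chi_Y+\chi_{Y^c}T\chi_{Y^c}$. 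For $\chi_Y T$, compactness of $\overline{Y}^{\beta X}$ and the covering $U_\R=\bigcup_{\alpha\in\R}\overline{r(\supp\alpha)}^{\beta X}$ supply finitely many $\alpha_1,\ldots,\alpha_n\in\R$ with $\bigcup_i r(\supp\alpha_i)\supseteq Y$; setting $\alpha=\sum_i\alpha_i\in\R$ and using rank-one projections on $r(\supp\alpha)$ produces an auxiliary $\phi_0'\in\Gamma_\R$ with $\supp(\phi_0')\supseteq Y$. Directedness of $\Gamma_\R$ provides a $\phi$ dominating both $\phi_0'$ and the $\phi_0$ furnished by condition (2) for $(T,\varepsilon/4,Y)$, so that $\chi_{\supp(\phi)\cap Y}T=\chi_Y T$ and $\|\Lambda_\phi\chi_Y T-\chi_Y T\|<\varepsilon/4$ with $\Lambda_\phi\chi_Y T\in I(\R)$ by Lemma \ref{lem:diagonal operator} and the ideal property; closedness forces $\chi_Y T\in I(\R)$. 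The adjoint argument applied to $T^*\in\tilde{I}(\R)$ places $\chi_{Y^c}T\chi_Y$ in $I(\R)$ as well.

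For the remaining summand $\chi_{Y^c}T\chi_{Y^c}$, partial property A enters through the Schur multiplier $M_\varphi$, which is a CCP map on operators supported in $Y^c\times Y^c$, and $M_\varphi(\chi_{Y^c}T\chi_{Y^c})$ has propagation $\leq S$. Splitting through $T_0$, the $(T-T_0)$-part contributes at most $2\|T-T_0\|\leq \varepsilon/2$ via $\|M_\varphi\|\leq 1$, while the $T_0$-part is bounded by $N_R\varepsilon_1\|T_0\|$ through the entry-wise expression $(\varphi(x,y)-1)\chi_{Y^c}(x)\chi_{Y^c}(y)T_0(x,y)$, the estimate $|\varphi-1|\leq\varepsilon_1$ on $\{d\leq R\}$, and bounded geometry via a Schur-test argument; taking $\varepsilon_1$ small absorbs this into $\varepsilon/4$. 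Finally $M_\varphi(\chi_{Y^c}T\chi_{Y^c})\in I(\R)$, because $|\varphi|\leq 1$ forces $\rank([M_\varphi(\chi_{Y^c}T\chi_{Y^c})]_{\varepsilon'}(x,y))\leq\rank(T(x,y)_{\varepsilon'})$, so $T\in\tilde{I}(\R)$ places each $\varepsilon'$-truncation in $C_\R$, and finite propagation plus bounded geometry promote entry-wise convergence as $\varepsilon'\to 0$ to norm convergence, landing the limit in $\overline{I(\R)}=I(\R)$. Assembling the three pieces shows $T$ lies within $\varepsilon$ of $I(\R)$, hence in $I(\R)$.

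The main obstacle is juggling several quantitative parameters simultaneously: the finite-propagation approximation must be chosen first to set the scale $R$, then $\varepsilon_1$ must be small relative to $N_R\|T_0\|$, and the truncation scale $\varepsilon'$ must be fine enough to recover $M_\varphi(\chi_{Y^c}T\chi_{Y^c})$ in norm. It is precisely the contractivity $|\varphi|\leq 1$ (from positivity of type together with $\varphi(x,x)=1$) combined with $T\in\tilde{I}(\R)$ that prevents rank inflation after applying the Schur multiplier and keeps every truncation genuinely inside $\R$ rather than in some enlarged rank distribution.
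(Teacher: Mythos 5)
Your proposal is correct and follows essentially the same route as the paper's proof: for $(1)+(2)\Rightarrow(3)$ you cut $T$ along a set $Y$ supplied by Lemma \ref{equi-partial A}, handle the pieces meeting $Y$ via condition (2) together with the diagonal operators $\Lambda_\phi$ of Lemma \ref{lem:diagonal operator}, and handle $\chi_{Y^c}T\chi_{Y^c}$ with the Schur multiplier of the positive type kernel, while for $(3)\Rightarrow(2)$ you build $\phi_0$ from a finite-propagation approximant in $C_\R$ exactly as the paper does. The only deviations are cosmetic (a three-term rather than four-term decomposition, an adjoint trick in place of right multiplication by $\Lambda_{\phi_3}$, and a direct singular-value verification that $M_\varphi(\chi_{Y^c}T\chi_{Y^c})\in \tilde I(\R)\cap\CC[X]\subseteq I(\R)$ where the paper cites \cite[Theorem 3.3(iii)]{CW-rank}), and all of these check out.
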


\begin{proof} For ``$(1)+(2)\Rightarrow (3)$'': Given $T\in \tilde{I}(\mathcal{R})$ and $\varepsilon>0$, take $F\in \CC[X]$ with $\|T-F\|\leq \varepsilon$. Denote $R:=\ppg(F)$ and take $M>0$ such that $\sharp B(x,R)\leqslant M$ for each $x\in X$. By condition (1) and Lemma \ref{equi-partial A}, there exist $S>0$, $Y\subseteq X$ with $\overline{Y}^{\beta X}\subseteq U$ and a positive type kernel $\varphi(x,y): Y^c\times Y^c\rightarrow \mathbb{R}$ such that for any $x,y\in Y^c$, we have
\begin{enumerate}
 \item[(a)] $\varphi(x,y)=\varphi(y,x)$ and $\varphi(x,x)=1$; 
 \item[(b)] $|1-\varphi(x,y)|\leq \dfrac{\varepsilon}{M(\|T\|+1)}$ if $d(x,y)\leq R$;
 \item[(c)] $\varphi(x,y)=0$ whenever $d(x,y)\geq S$.
\end{enumerate}
Decomposing $T = T_1 + T_2 + T_3 + T_4$, where
\[
 T_1:=\chi_Y T \chi_Y, \quad T_2:=\chi_Y T \chi_{Y^c}, \quad T_3:=\chi_{Y^c} T \chi_Y \quad \text{ and } \quad T_4:=\chi_{Y^c} T \chi_{Y^c}.
\]
Note that all $T_i$ belong to $\tilde{I}(\R)$ for $i=1,2,3,4$.

Consider the Schur multiplier $M_\varphi$ on $\B(\ell^2(Y^c) \otimes \H)$ defined by 
\[
(M_\varphi(A))(x,y):=\varphi(x,y)A(x,y) \quad \text{where} \quad x,y\in X,
\]
for $A\in \B(\ell^2(Y^c) \otimes \H)$. Then $\|M_\varphi\|\leq 1$ thanks to condition (a). Using the same argument for \cite[Lemma 4.3]{CW-05} and setting $F_4:= \chi_{Y^c} F \chi_{Y^c}$, we have 
\[
\|M_\varphi(T_4)-T_4\| \leq \|M_\varphi(T_4)-M_\varphi(F_4)\|+\|M_\varphi(F_4)-F_4\|+\|F_4-T_4\| \leq 3\varepsilon.
\]

On the other hand, since $\overline{Y}\subseteq U = U_\R$, there exists $\alpha\in \mathcal{R}$ with $r(\supp(\alpha))=Y$. Then $\chi_Y\in \mathcal{L(\mathcal{R})}$. Define $\phi_Y:X\rightarrow \FP(\H)$ by setting $\phi_Y(x)$ to be a rank-one projection on $\H$ if $x\in Y$, and $0$ otherwise. Then it is clear that $\rank \circ \phi_Y = \chi_Y$ and hence, $\phi_Y \in \Gamma_\R$. Also by condition (2), there exists $\phi_0\in \Gamma_\R$ such that 
\[
\|\Lambda_\phi \chi_Y T -\chi_{\supp(\phi)\cap Y} T\|<\varepsilon
\]
for any $\phi\geq \phi_0$ in $\Gamma_\R$ and hence,
\begin{equation}\label{EQ:app}
\|\Lambda_{\phi} T_1-\chi_{\supp(\phi)} T_1\|= \|(\Lambda_\phi \chi_Y T -\chi_{\supp(\phi)\cap Y} T)\chi_Y\|<\varepsilon. 
\end{equation}
Take $\phi_1\in \Gamma_\R$, which is greater than both $\phi_Y$ and $\phi_0$. Then $Y \subseteq \supp(\phi_1)$, which shows that $\chi_{\supp(\phi_1)} T_1 = T_1$. Combining with (\ref{EQ:app}), we obtain $\|\Lambda_{\phi_1} T_1-T_1\|<\varepsilon$. Similarly, there exist $\phi_2, \phi_3 \in \Gamma_\R$ such that $\|\Lambda_{\phi_2} T_2-T_2\|<\varepsilon$ and $\| T_3 \Lambda_{\phi_3} -T_3\|<\varepsilon$.
Combining them together, we obtain
\[
\|\Lambda_{\phi_1} T_1 + \Lambda_{\phi_2} T_2 + T_3\Lambda_{\phi_3} + M_{\varphi}(T_{4}) - T\| \leq 6\varepsilon.
\]
By Lemma \ref{lem:diagonal operator}, we have $\Lambda_{\phi_1} T_1$, $\Lambda_{\phi_2} T_2$ and $T_3\Lambda_{\phi_3}$ belong to $I(\R)$. By \cite[Theorem 3.3(iii)]{CW-rank}, $M_{\varphi}(T_{4}) \in \tilde{I}(\R) \cap \CC[X] \subseteq I(\R)$. Letting $\varepsilon \to 0$, we obtain that $T \in I(\R)$.

For ``$(3)\Rightarrow (2)$'': Given $T\in C_{\mathcal{R}}$ whose support is a partial translation, define a map $\phi_T: X \to \FP(\H)$ by setting $\phi_T(x)$ to be the orthogonal projection onto the range of $T(x,y)$ for each $x\in X$. Then $\phi_T \in \Gamma_\R$ since $T\in C_{\mathcal{R}}$. Also by definition, we have $\Lambda_\phi  T=T=\chi_{\supp(\phi)} T$ whenever $\phi \geq \phi_T$. 

For general $T \in C_\R$, decompose $T=T_1 + \cdots + T_n$ for some $n\in \NN$ such that each $T_i \in C_\R$ with $\supp(T_i)$ being a partial translation. Then for each $i=1,2,\cdots, N$, choose $\phi_{T_i} \in \Gamma_\R$ such that $\Lambda_{\phi}  T_i=T_i=\chi_{\supp(\phi)} T_i$ for any $\phi \geq \phi_{T_i}$. Taking $\phi_T\in\Gamma_\R$ greater than each $\phi_{T_i}$, we obtain $\Lambda_\phi  T=T=\chi_{\supp(\phi)} T$ whenever $\phi \geq \phi_T$. 

Finally for $T\in I(\mathcal{R})$, we choose a sequence $\{T_n\}\subseteq C_{\mathcal{R}}$ such that $\|T_n-T\|\to 0$ as $n\to \infty$. Hence for any $\varepsilon>0$, there exists $N\in \mathbb{N}$ such that $\|T_n-T\|\leq \varepsilon$ whenever $n\geq N$. Take $\phi_{T_N}$ as above such that $\Lambda_{\phi} T_N=T_N=\chi_{\supp(\phi)} T_N$ whenever $\phi \geq \phi_{T_N}$. Therefore for $\phi \geq \phi_{T_N}$, we have
\begin{align*}
\|\Lambda_{\phi} T - \chi_{\supp(\phi)} T\|&\leq \|\Lambda_{\phi}(T-T_N)\|+\|\chi_{\supp(\phi)} (T-T_N)\|+
\|\Lambda_{\phi_N} T_N-\chi_{\supp(\phi)} T_N\|\\
&\leq \|\Lambda_{\phi}\| \cdot \|(T-T_N)\|+\|\chi_{\supp(\phi)}\| \cdot \| (T-T_N)\| \leq 2\varepsilon.
\end{align*}
Hence for each $Y \subseteq X$ with $\overline{Y} \subseteq U$, we obtain $\|\Lambda_\phi \chi_Y T-\chi_{\supp(\phi)\cap Y} T\|<2\varepsilon$, which concludes (2) since $I(\mathcal{R})=\tilde{I}(\mathcal{R})$.
\end{proof}

In the proof of ``$(3)\Rightarrow (2)$'' above, we actually proved the following:

\begin{cor}
Let $\R$ be a rank distribution on $X$. Then for any $T\in I(\mathcal{R})$ and $\varepsilon>0$, there exists $\phi_0 \in \Gamma_\R$ such that  $\|\Lambda_\phi T-\chi_{\supp(\phi)} T\|<\varepsilon$ whenever $\phi \geq \phi_0$.
\end{cor}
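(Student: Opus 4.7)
The plan is to extract the case $Y = X$ from the proof of ``$(3) \Rightarrow (2)$'' in Theorem \ref{thm:partial A}; partial property A plays no role here, only the density of $C_{\R}$ in $I(\R)$ together with a pointwise construction of $\phi_T$. So this is really a direct corollary of the argument already carried out, and my job is to observe that the estimate obtained there survives the removal of the localising factor $\chi_Y$.

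First I would handle $T \in C_{\R}$ whose support $E := \supp(T)$ is a partial translation. For each $x \in r(E)$ there is a unique $y \in X$ with $T(x,y) \neq 0$; define $\phi_T(x) \in \FP(\H)$ to be the orthogonal projection onto the range of $T(x,y)$, and set $\phi_T(x) := 0$ elsewhere. Since $\rank(\phi_T(x)) = \rank(T(x,y)) \leq \varphi_{\RANK(T)}(x)$ and $\RANK(T) \in \R$, we have $\rank \circ \phi_T \in \L(\R)$, so $\phi_T \in \Gamma_\R$. For any $\phi \geq \phi_T$ in $\Gamma_\R$, the range of $\phi(x)$ contains the range of $T(x,y)$, whence $\phi(x) T(x,y) = T(x,y)$; together with $\supp(\phi) \supseteq r(E)$ this yields $\Lambda_\phi T = T = \chi_{\supp(\phi)} T$.

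Second, for general $T \in C_\R$, I would use finite propagation and bounded geometry to partition $\supp(T)$ into finitely many partial translations $E_1, \ldots, E_n$, and write $T = T_1 + \cdots + T_n$ where $T_i$ is the restriction of $T$ to $E_i$. Each $T_i$ lies in $C_\R$ since $\RANK(T_i) \leq \RANK(T) \in \R$ by Definition \ref{def-rank distribution}(3). Taking $\phi_T \in \Gamma_\R$ above all the $\phi_{T_i}$'s then gives $\Lambda_\phi T = T = \chi_{\supp(\phi)} T$ whenever $\phi \geq \phi_T$.

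Finally, for $T \in I(\R)$ arbitrary, choose $\{T_k\} \subseteq C_\R$ with $\|T_k - T\| \to 0$, fix $N$ with $\|T_N - T\| < \varepsilon/2$ and set $\phi_0 := \phi_{T_N}$. For $\phi \geq \phi_0$, the triangle inequality together with $\|\Lambda_\phi\|, \|\chi_{\supp(\phi)}\| \leq 1$ yields
\[
\|\Lambda_\phi T - \chi_{\supp(\phi)} T\| \leq \|\Lambda_\phi(T - T_N)\| + 0 + \|\chi_{\supp(\phi)}(T_N - T)\| \leq 2\|T - T_N\| < \varepsilon,
\]
which is the desired conclusion. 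No substantive obstacle arises; the only point that warrants care is the partial-translation decomposition in the second step, which is standard under bounded geometry.
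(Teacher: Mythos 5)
Your proposal is correct and follows essentially the same route as the paper: the corollary is precisely the paper's proof of ``$(3)\Rightarrow(2)$'' in Theorem \ref{thm:partial A} with the localising set $Y$ removed, proceeding via the partial-translation case, the finite decomposition of a general element of $C_\R$, and density of $C_\R$ in $I(\R)$. Your added justifications (that $\rank\circ\phi_T\in\L(\R)$ and that $\RANK(T_i)\leq\RANK(T)$ keeps each piece in $C_\R$) are accurate fillings-in of steps the paper leaves implicit.
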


Finally, we notice the following facts for the maximal rank distribution:

\begin{lem}\label{lem:condition for Rmax}
Let $U \subseteq \beta X$ be a non-empty invariant open subset, and $\R:=\R_{\max}(U)$ be the maximal rank distribution defined in Definition \ref{defn:min and max}. Given $T \in C^*(X)$, we have:
\begin{enumerate}
 \item $\chi_{\supp(\phi)} T \in I(\R)$ for any $\phi \in \Gamma_\R$;
 \item for any $\varepsilon>0$ and $Y \subseteq X$ with $\overline{Y}^{\beta X} \subseteq U$, there exists $\phi_0 \in \Gamma_\R$ such that whenever $\phi \geq \phi_0$ in $\Gamma_\R$, then $\|\Lambda_\phi \chi_Y T-\chi_{\supp(\phi)\cap Y} T\|<\varepsilon$.
\end{enumerate}
\end{lem}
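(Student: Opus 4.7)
The plan for part (1) is to approximate $\chi_{\supp(\phi)} T$ by elements of $C_\R$. Set $Z := \supp(\phi)$. I would first verify that $\overline{Z}^{\beta X} \subseteq U$: since $\rank \circ \phi \in \L(\R) = \L(\R_{\max}(U))$, this function equals $\varphi_\alpha$ for some $\alpha \in \R_{\max}(U)$, so $Z = \supp(\varphi_\alpha) = r(\supp(\alpha))$ has closure in $U$ by definition of $\R_{\max}(U)$. Given $\varepsilon > 0$, approximate $T$ by some $S \in \CC[X]$ with $\|T - S\| < \varepsilon$, and consider the $\varepsilon'$-truncation $(\chi_Z S)_{\varepsilon'}$: each matrix entry equals $\chi_Z(x) S(x,y)_{\varepsilon'}$, which is finite rank; the propagation is $\leq \ppg(S)$, and the range coordinates lie in $Z$. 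Hence $\RANK((\chi_Z S)_{\varepsilon'})$ belongs to $\R_{\max}(U) = \R$, placing $(\chi_Z S)_{\varepsilon'}$ in $C_\R \subseteq I(\R)$. Taking $\varepsilon' \to 0$ (the truncations converge in norm, via a Schur-type bound on bounded-propagation operators) and then $\varepsilon \to 0$ (by closedness of $I(\R)$) yields $\chi_Z T \in I(\R)$.

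For part (2), the plan is to build $\phi_0$ entry by entry using compactness of matrix entries. First approximate $T$ by some $S \in \CC[X]$ with $\|T - S\| < \varepsilon/4$, and set $R := \ppg(S)$ and $M := \sup_x \sharp B(x, R) < \infty$. For each $x \in Y$, since the set $\{S(x, y) : y \in B(x, R)\}$ consists of at most $M$ compact operators on $\H$, I would choose a single finite rank projection $p_x$ satisfying $\|(I - p_x) S(x, y)\| < \varepsilon/(4M)$ for every $y \in B(x, R)$ (take $p_x$ to be the projection onto a finite-dimensional subspace containing finite-rank $\varepsilon/(4M)$-approximants of each $S(x, y)$). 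Define $\phi_0(x) := p_x$ for $x \in Y$ and $\phi_0(x) := 0$ otherwise. Since $\supp(\rank \circ \phi_0) = Y$ with $\overline{Y}^{\beta X} \subseteq U$, and since $\R = \R_{\max}(U)$ imposes no upper bound on the values of rank functions (one realises $\rank \circ \phi_0$ as $\varphi_\alpha$ for a suitable diagonal $\alpha \in \R_{\max}(U)$), we get $\phi_0 \in \Gamma_\R$.

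It then remains to verify the norm estimate for $\phi \geq \phi_0$ in $\Gamma_\R$. Since $\supp(\phi) \supseteq Y$, we have $\supp(\phi) \cap Y = Y$, and the matrix entries of $\Lambda_\phi \chi_Y S - \chi_Y S$ vanish for $x \notin Y$ and equal $(\phi(x) - I) S(x, y)$ for $x \in Y$. As $\phi(x) \geq p_x$ as orthogonal projections, $\|(I - \phi(x)) S(x, y)\| \leq \|(I - p_x) S(x, y)\| < \varepsilon/(4M)$ for $y \in B(x, R)$. The difference has propagation $\leq R$, so a Schur test under bounded geometry gives $\|\Lambda_\phi \chi_Y S - \chi_Y S\| \leq M \cdot \varepsilon/(4M) = \varepsilon/4$; combined with the $2\|T - S\| < \varepsilon/2$ error from replacing $S$ by $T$, this gives $\|\Lambda_\phi \chi_Y T - \chi_{\supp(\phi) \cap Y} T\| < \varepsilon$. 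The main obstacle I anticipate is producing $\phi_0 \in \Gamma_\R$ with rank values large enough for the uniform approximation of the compact entries of $S$; this step crucially exploits the maximality of $\R$, and indeed Theorem \ref{prop:counterexample} confirms that the analogous assertion can fail for non-maximal $\R$, explaining why the lemma is restricted to $\R_{\max}(U)$.
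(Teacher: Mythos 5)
Your proposal is correct and follows essentially the same route as the paper's proof: in both parts one approximates $T$ by a finite-propagation operator, passes to finite-rank entries (via $\varepsilon'$-truncation in (1), via finite-rank projections absorbing the compact entries in (2)), and invokes maximality of $\R$ to place the resulting operator in $C_\R$ resp.\ the resulting projection-valued function in $\Gamma_\R$. The only cosmetic difference is in (2), where the paper chooses $\phi_0(x)$ to contain the ranges of the truncated entries exactly (so the key term vanishes identically), while you absorb them only up to $\varepsilon/(4M)$ and finish with a Schur estimate.
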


\begin{proof}
(1). For any $\epsilon>0$, choose $F \in \CC[X]$ such that $\|F-T\| < \epsilon$. Since $F$ has finite propagation, we can choose $\delta>0$ such that $\|F-F_{\delta}\| < \epsilon$. Hence
\[
\|\chi_{\supp(\phi)} T- \chi_{\supp(\phi)}F_{\delta}\| <2\epsilon.
\]
Note that $\chi_{\supp(\phi)}F_{\delta} \in C_\R \subseteq I(\R)$ since $\overline{\supp(\phi)} \subseteq U$ and $\R$ is the maximal rank distribution. 
Letting $\epsilon \to 0$, we conclude (1).

(2). We choose $F' \in \CC[X]$ such that $\|T-F'\| < \varepsilon/4$, and $\delta>0$ such that $\|F' - F'_\delta\| < \varepsilon/4$. Note that $F'_\delta$ has finite propagation and each entry has finite rank. Given $x \in Y$, we set $\phi_0(x) \in \FP(\H)$ whose range contains those of $F'_\delta(x,y)$ for all $y\in X$. Also set $\phi_0(x) = 0$ for $x\in X \setminus Y$. Hence $\supp(\phi_0) \subseteq Y$, which implies that $\phi_0 \in \Gamma_\R$ since $\R$ is maximal. Therefore for any $\phi \geq \phi_0$, we have
\[
\|\Lambda_\phi \chi_Y T-\chi_{\supp(\phi)\cap Y} T\| < \|\Lambda_\phi \chi_Y F'_\delta-\chi_{\supp(\phi)\cap Y} F'_\delta\| + \varepsilon = \varepsilon,
\]
which concludes the proof.
\end{proof}



Lemma \ref{lem:condition for Rmax}(2) shows that condition (2) in Theorem \ref{thm:partial A} always holds for maximal rank distribution. Hence combining with Theorem \ref{thm:partial A}, we obtain an alternative proof for Proposition \ref{prop: criteria to ensure I(R) = tilde I(R)}. 
However, according to Theorem \ref{prop:counterexample}, condition (2) does \emph{not} hold in general (at least not for bounded rank distributions).

%


%

\bigskip


%
%
%
%
%
%
%
%
%

\bibliographystyle{plain}
\bibliography{ghost}

\begin{thebibliography}{10}

\bibitem{AD}
C.~Anantharaman-Delaroche.
\newblock Exact groupoids.
\newblock {\em arXiv preprint arXiv:1605.05117}, 2016.

\bibitem{ADR00}
C.~Anantharaman-Delaroche and J.~Renault.
\newblock {\em Amenable groupoids}, volume~36 of {\em Monographies de
  L'Enseignement Math\'{e}matique}.
\newblock L'Enseignement Math\'{e}matique, Geneva, 2000.
\newblock With a foreword by Georges Skandalis and Appendix B by E. Germain.

\bibitem{BC00}
P.~Baum and A.~Connes.
\newblock Geometric {$K$}-theory for {L}ie groups and foliations.
\newblock {\em Enseign. Math.}, 46(1/2):3--42, 2000 (firstly circulated in
  1982).

\bibitem{BCH94}
P.~Baum, A.~Connes, and N.~Higson.
\newblock Classifying space for proper actions and {$K$}-theory of group
  {$C^*$}-algebras.
\newblock {\em Contemporary Mathematics}, 167:241--241, 1994.

\bibitem{BS87}
M.~Sh. Birman and M.~Z. Solomjak.
\newblock {\em Spectral theory of selfadjoint operators in {H}ilbert space}.
\newblock Mathematics and its Applications (Soviet Series). D. Reidel
  Publishing Co., Dordrecht, 1987.
\newblock Translated from the 1980 Russian original by S. Khrushch\"ev and V.
  Peller.

\bibitem{BO08}
Nathanial~P. Brown and Narutaka Ozawa.
\newblock {\em {$\rm C^*$}-algebras and finite-dimensional approximations},
  volume~88 of {\em Graduate Studies in Mathematics}.
\newblock American Mathematical Society, Providence, RI, 2008.

\bibitem{CW01}
X.~Chen and Q.~Wang.
\newblock Notes on ideals of {R}oe algebras.
\newblock {\em Q. J. Math.}, 52(4):437--446, 2001.

\bibitem{CW}
X.~Chen and Q.~Wang.
\newblock Ideal structure of uniform {R}oe algebras of coarse spaces.
\newblock {\em J. Funct. Anal.}, 216(1):191 -- 211, 2004.

\bibitem{CW-05}
X.~Chen and Q.~Wang.
\newblock Ghost ideals in uniform {R}oe algebras of coarse spaces.
\newblock {\em Arch. Math. (Basel)}, 84(6):519--526, 2005.

\bibitem{CW-rank}
X.~Chen and Q.~Wang.
\newblock Rank distributions of coarse spaces and ideal structure of {R}oe
  algebras.
\newblock {\em Bull. London Math. Soc.}, 38(5):847--856, 2006.

\bibitem{CWY13}
X.~Chen, Q.~Wang, and G.~Yu.
\newblock The maximal coarse {B}aum--{C}onnes conjecture for spaces which admit
  a fibred coarse embedding into {H}ilbert space.
\newblock {\em Adv. Math.}, 249:88--130, 2013.

\bibitem{HLS}
N.~Higson, V.~Lafforgue, and G.~Skandalis.
\newblock Counterexamples to the {B}aum-{C}onnes conjecture.
\newblock {\em Geom. Funct. Anal.}, 12(2):330--354, 2002.

\bibitem{HR95}
N.~Higson and J.~Roe.
\newblock On the coarse {B}aum-{C}onnes conjecture.
\newblock In {\em Novikov conjectures, index theorems and rigidity, {V}ol. 2
  ({O}berwolfach, 1993)}, volume 227 of {\em London Math. Soc. Lecture Note
  Ser.}, pages 227--254. Cambridge Univ. Press, Cambridge, 1995.

\bibitem{KS04}
M.~Khoshkam and G.~Skandalis.
\newblock Crossed products of {$C^*$}-algebras by groupoids and inverse
  semigroups.
\newblock {\em Journal of Operator Theory}, pages 255--279, 2004.

\bibitem{LeG}
P.-Y. Le~Gall.
\newblock {T}h{\'e}orie de {K}asparov {\'e}quivariante et groupo{\"\i}des.
\newblock {\em Comptes Rendus de l'Acad{\'e}mie des Sciences-Series
  I-Mathematics}, 324(6):695--698, 1997.

\bibitem{Re}
J.~Renault.
\newblock {\em A groupoid approach to {$C^{*}$}-algebras}, volume 793 of {\em
  Lecture Notes in Mathematics}.
\newblock Springer, Berlin, 1980.

\bibitem{Roe88}
J.~Roe.
\newblock An index theorem on open manifolds. {I}.
\newblock {\em J. Differential Geom.}, 27(1):87--113, 1988.

\bibitem{Roe96}
J.~Roe.
\newblock {\em Index theory, coarse geometry, and topology of manifolds},
  volume~90.
\newblock Amer. Math. Soc., 1996.

\bibitem{Roe03}
J.~Roe.
\newblock {\em Lectures on coarse geometry}, volume~31 of {\em University
  Lecture Series}.
\newblock Amer. Math. Soc., Providence, RI, 2003.

\bibitem{RW14}
J.~Roe and R.~Willett.
\newblock Ghostbusting and property {A}.
\newblock {\em J. Funct. Anal.}, 266(3):1674--1684, 2014.

\bibitem{STY}
G.~Skandalis, J.-L. Tu, and G.~Yu.
\newblock The coarse {B}aum-{C}onnes conjecture and groupoids.
\newblock {\em Topology}, 41(4):807--834, 2002.

\bibitem{Tu}
J.-L. Tu.
\newblock La conjecture de {B}aum-{C}onnes pour les feuilletages moyennables.
\newblock {\em $K$-Theory}, 17(3):215--264, 1999.

\bibitem{Spa09}
J.~\v{S}pakula.
\newblock Uniform {$K$}-homology theory.
\newblock {\em J. Funct. Anal.}, 257(1):88--121, 2009.

\bibitem{Wan07}
Q.~Wang.
\newblock Remarks on ghost projections and ideals in the {R}oe algebras of
  expander sequences.
\newblock {\em Arch. Math. (Basel)}, 89(5):459--465, 2007.

\bibitem{WZ23}
Q.~Wang and J.~Zhang.
\newblock Ghostly ideals in uniform {R}oe algebras.
\newblock {\em arXiv:2301.04921}, 2023.

\bibitem{WY12}
R.~Willett and G.~Yu.
\newblock Higher index theory for certain expanders and {G}romov monster
  groups, {I}.
\newblock {\em Adv. Math.}, 229(3):1380--1416, 2012.

\bibitem{Yu00}
G.~Yu.
\newblock The coarse {B}aum-{C}onnes conjecture for spaces which admit a
  uniform embedding into {H}ilbert space.
\newblock {\em Invent. Math.}, 139(1):201--240, 2000.

\end{thebibliography}

%
%
%
%
%


\end{document}